\renewcommand{\phi}{\varphi}
\renewcommand{\epsilon}{\varepsilon}
\renewcommand{\theta}{\vartheta}
\def\ZZ{{\mathbf Z}}
\def\FF{{\mathbf F}}
\def\AAA{{\mathbf A}}
\def\RR{{\mathbf R}}
\def\QQ{{\mathbf Q}}
\def\FF{{\mathbf F}}
\def\cJ{\mathcal{J}}
\def\cF{\mathcal{F}}
\def\cG{\mathcal{G}}
\def\cL{\mathcal{L}}
\def\cO{\mathcal{O}}
\def\fra{\mathfrak{a}}
\def\frb{\mathfrak{b}}
\def\frm{\mathfrak{m}}
\DeclareMathOperator{\codim}{codim}
 \DeclareMathOperator{\Spec}{Spec}
\newtheorem{lemma}{Lemma}[section]
\newtheorem{theorem}[lemma]{Theorem}
\newtheorem{corollary}[lemma]{Corollary}
\newtheorem{proposition}[lemma]{Proposition}
\newtheorem{conjecture}[lemma]{Conjecture}
\theoremstyle{definition}
\newtheorem{remark}[lemma]{Remark}
\newtheorem{example}[lemma]{Example}
\theoremstyle{remark}
\newtheorem*{remark*}{Remark}
\newtheorem*{note*}{Note}
\begin{document}

\title{Ordinary varieties and the comparison between multiplier ideals
and test ideals}

\thanks{2000\,\emph{Mathematics Subject Classification}.
 Primary 13A35; Secondary 14F18, 14F30.
\newline Musta\c{t}\u{a} was partially supported by
 NSF grant DMS-0758454 and
  a Packard Fellowship. Srinivas was partially supported by a J.C.
Bose Fellowship of the Department of Science and Technology, India.}
\keywords{Test ideals, multiplier ideals, ordinary variety, semistable reduction}

\author[M.~Musta\c{t}\u{a}]{Mircea~Musta\c{t}\u{a}}
\address{Department of Mathematics, University of Michigan,
Ann Arbor, MI 48109, USA}
\email{{mmustata@umich.edu}}

\author[V.~Srinivas]{Vasudevan~Srinivas}
\address{School of Mathematics, Tata Institute of Fundamental Research,
Homi Bhabha Road, Colaba, Mumbai 400005, India}
\email{{srinivas@math.tifr.res.in}}

\begin{abstract}
We consider the following conjecture: if $X$ is a smooth and irreducible  $n$-dimensional projective variety over a field $k$ of characteristic zero, then there is a dense set  of reductions $X_s$ to positive characteristic such that the action of the Frobenius morphism on $H^n(X_s,\cO_{X_s})$
is bijective. There is another conjecture relating certain invariants of singularities in characteristic zero (the multiplier ideals) with invariants in positive characteristic (the test ideals). We prove that the former conjecture implies the latter one in the case of ambient nonsingular varieties. 
\end{abstract}

\maketitle

\markboth{M.~MUSTA\c{T}\u{A} AND V.~SRINIVAS}{ORDINARY VARIETIES, MULTIPLIER IDEALS,
AND TEST IDEALS}

\section{Introduction}
It has been known for about thirty years that there are close connections between classes of singularities that appear in birational geometry, and such classes that appear in commutative algebra, and more precisely, in tight closure theory. Recall that in birational geometry, singularities are typically described in terms of a suitable resolution of singularities. On the other hand, tight closure theory describes the singularities in positive characteristic 
in terms of the action of the Frobenius morphism. The connection between the two points of view
is very rich, but still remains somewhat mysterious.

The best known example of such a connection concerns rational singularities: it says that a variety has rational singularities if and only if it has $F$-rational type
($F$-rationality is a notion defined in positive characteristic via the tight closure of parameter ideals).
More precisely, suppose that  $X$ is defined over a field $k$ 
of characteristic zero, and consider a model of $X$ defined over an algebra $A$ of finite type 
over $\ZZ$. For every closed point $s\in\Spec A$ consider the corresponding reduction $X_s$
to positive characteristic. Then $X$ has rational singularities if and only if there is an open subset
$U$ of $\Spec A$ such that $X_s$ has $F$-rational singularities for every closed point $s\in U$ (the ``if" part was proved in \cite{Smith},
while the ``only if" part was proved independently in \cite{Hara} and \cite{MS}).

Other classes of singularities behave in the same fashion: see \cite{HW}
for the comparison between Kawamata log terminal and strongly  $F$-regular singularities. On the other hand,
a more subtle phenomenon relates, for example, log canonical and $F$-pure singularities.
It is known that if there is a (Zariski) dense set of closed points $S\subset\Spec A$ such that $X_s$ has $F$-pure singularities for all $s\in S$,
then $X$ has log canonical singularities (see \cite{HW}). The converse, however, is widely open,
and in general the set of closed points $s\in\Spec A$ for which $X_s$ has $F$-pure singularities does not contain an open subset, even when it is dense. Furthermore, examples have made it clear that there are some subtle arithmetic phenomena involved.

The main goal of our paper is to consider an arithmetic-geometric conjecture, and show that it implies 
a similar such connection, between
multiplier ideals (invariants in characteristic zero) and test ideals (invariants in characteristic $p$). 
We believe that this puts in a new perspective the correspondence between the two sets of invariants, and hopefully points to a possible way of proving this correspondence.

\begin{conjecture}\label{conj_introd}
Let $X$ be a smooth, connected $n$-dimensional projective variety over an algebraically closed field
$k$ of characteristic zero. Given a model of $X$ over a $\ZZ$-algebra of finite type $A$, 
contained in $k$, there is a dense set of closed points $S\subseteq \Spec A$ such that the action induced by Frobenius on
$H^n(X_s,\cO_{X_s})$ is bijective for every $s\in S$.
\end{conjecture}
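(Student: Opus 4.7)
The plan is to reformulate the bijectivity condition in terms of Newton polygons, spread the variety out over $\Spec A$, and reduce to a density problem for an a priori constructible locus. First, by Serre duality the Frobenius on $H^n(X_s,\cO_{X_s})$ is dual to the Cartier operator on $H^0(X_s,\omega_{X_s})$, and it equivalently picks out the slope-$0$ part of the $F$-crystal $H^n_{\mathrm{crys}}(X_s/W(\kappa(s)))$; bijectivity thus amounts to the Newton polygon of $H^n_{\mathrm{crys}}$ starting with exactly $h^{0,n}(X)$ slopes equal to $0$, i.e.\ agreeing with the Hodge polygon at its top extreme.

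After shrinking $\Spec A$ we may assume $\pi\colon\cX\to\Spec A$ is smooth projective with geometrically connected fibers and that $\mathcal{H}^n:=R^n\pi_*\cO_\cX$ is locally free of rank $h^{0,n}(X)$ with formation commuting with base change. At each closed point $s$ of residue characteristic $p$, the absolute Frobenius of $X_s$ induces a $\sigma$-linear endomorphism $F_s$ of $\mathcal{H}^n\otimes\kappa(s)$; since $\kappa(s)$ is a finite field, bijectivity of $F_s$ is equivalent to the nonvanishing of an appropriate Frobenius-twisted determinant, and by Grothendieck's semicontinuity theorem for Newton polygons in families this locus is Zariski open in each mod-$p$ fiber of $\Spec A$. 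The conjecture therefore reduces to the statement that for a Zariski-dense set of primes $p$ in the image of $\Spec A\to\Spec\ZZ$, this open set is nonempty in the fiber over $p$.

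This last step is the main obstacle, and is where the genuine arithmetic difficulty lies. For abelian varieties it is known by Serre--Tate theory and classical density of ordinary reductions; for K3 surfaces the Kuga--Satake construction reduces the problem to the abelian case. For general $X$ one might hope to control $H^n(X,\cO_X)$ by a motivic correspondence to an abelian variety (via Albanese or intermediate Jacobians), but for $n>1$ this is essentially never available. A complementary deformation-theoretic strategy would use Grothendieck--Oort theory to move within the formal deformation space of some $X_s$ toward an ordinary stratum, and then match these deformations with the geometric deformations actually contained in $\cX\to\Spec A$. The rigidity of this arithmetic family, as compared with the full formal deformation space, seems to be the essential difficulty: absent a new geometric construction, or some automorphic input giving control of Frobenius eigenvalues on $H^n$ for a dense set of primes, I do not see how to complete the argument in the stated generality.
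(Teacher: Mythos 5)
This statement is a \emph{conjecture}, not a theorem, and the paper does not (and does not claim to) prove it. The entire paper is devoted to showing that Conjecture~\ref{conj_introd} \emph{implies} Conjecture~\ref{conj_introd2}; the paper's contribution is Theorem~\ref{thm_introd}, and Conjecture~\ref{conj_introd} remains the standing hypothesis. Your assessment is therefore essentially correct: you recognize the reformulation via Serre duality and the Cartier operator, the spreading-out and constructibility arguments, and you correctly diagnose that the remaining step --- nonemptiness of the ordinary locus in the fiber over a dense set of primes --- is the genuinely open arithmetic input that no current technique supplies in general. This matches what the paper itself says: it records that the statement is expected (and is implied by the Bloch--Kato notion of ordinary reduction, Remark~\ref{rem_ordinary1}), reduces the field of definition to $\overline{\QQ}$ (Proposition~\ref{reduction_to_number_field}), and lists the known cases (elliptic curves classically, abelian surfaces and genus-$2$ curves by Ogus, Examples~\ref{ex_abelian_variety} and \ref{ex_curves}), while leaving the general case open.

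A few small comments. The Newton-polygon reformulation you give should be phrased with care: bijectivity of $F$ on $H^n(X_s,\cO_{X_s})$ is the condition that the slope-$0$ part of the $F$-isocrystal $H^n(X_s, W\cO_{X_s})\otimes K$ has the maximal possible rank $h^{0,n}$, which is a statement only about degree $n$ and is a priori weaker than full ordinarity in the sense of Bloch--Kato; the paper emphasizes exactly this point, hoping the weaker condition is more accessible. Your mention of Kuga--Satake for K3 surfaces and the general discussion of Serre--Tate/Grothendieck deformation-theoretic strategies goes somewhat beyond what the paper records, but it is a reasonable survey of known attack routes, and your conclusion that they do not close the gap is correct. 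In short: you have not produced a proof because none exists, and you correctly identified this.
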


As we show, in the above conjecture it is enough to consider the case $k=\overline{\QQ}$
(see Proposition~\ref{reduction_to_number_field}).
We mention that it is expected that under the assumptions in the conjecture, there is a dense set of closed points $S\subseteq \Spec A$, such that for every $s\in S$, the smooth projective
variety $X_s$ over $k(s)$ is ordinary in the sense of \cite{BK}. One can show that this condition 
implies that the action induced by Frobenius on each cohomology group
$H^i(X_s,\cO_{X_s})$ is bijective. On the other hand, we hope that the property in 
Conjecture~\ref{conj_introd} would be easier to prove than the stronger property of being ordinary.

Before stating the consequence of Conjecture~\ref{conj_introd} to the relation between multiplier ideals and test ideals, let us
recall the definitions of these ideals. Since our main result only deals with nonsingular ambient
varieties, we review these concepts in this special case.
Let $Y$ be a nonsingular, connected variety defined over an algebraically closed field of characteristic zero,
and suppose that $\fra$ is a nonzero ideal on 
$Y$. Recall that a log resolution of $(Y,\fra)$ is a projective birational morphism 
$\pi\colon X\to Y$, with $X$ nonsingular and $\fra\cdot\cO_X=\cO_X(-G)$, with $G$ a divisor, such that there is a simple normal crossings divisor $E$ on $X$, with
both $G$ and $K_{X/Y}$ supported on $E$. Here $K_{X/Y}$ is the relative canonical divisor. 
Such resolutions exist by Hironaka's theorem, since $Y$ lives in characteristic zero. 
The multiplier ideal of $\fra$ of exponent $\lambda\geq 0$ is the ideal
$$\cJ(Y,\fra^{\lambda}):=\pi_*\cO_X(K_{X/Y}-\lfloor\lambda G\rfloor),$$
where for any $\RR$-divisor $E$, we denote by $\lfloor E\rfloor$ its round-down. 
It is a general fact that the definition is independent of the given resolution. 
These ideals have recently found many striking applications in birational geometry, mostly
due to their connection with vanishing theorems, see \cite{positivity}.

In positive characteristic, Hara and Yoshida \cite{HY} introduced the notion of
(generalized) test ideal, relying on a generalization of the theory of tight closure.  
In this paper we use an equivalent definition due to Schwede \cite{Schwede}.
This definition is particularly transparent in the case of an ambient nonsingular variety, when it is an immediate 
consequence of the description in \cite{BMS}.

Suppose that $Y$ is a nonsingular, connected variety over a perfect field $L$ of characteristic $p>0$, 
and $\fra$ is an ideal on $Y$. The Cartier isomorphism induces a surjective 
$\cO_X$-linear map
$t_Y\colon F_*\omega_Y\to\omega_Y$, where $F$ is the absolute Frobenius morphism.
Iterating this $e$ times gives $t_Y^e\colon F^e_*\omega_Y\to\omega_Y$.
For any ideal $\frb$ on $Y$, and for every
$e\geq 1$, the ideal $\frb^{[1/p^e]}$ is defined by 
$t_Y^e(F^e_*(\frb\cdot\omega_Y))=\frb^{[1/p^e]}\cdot\omega_Y$.
Given any $\lambda\geq 0$, it is easy to see that the sequence of ideals
$\left((\fra^{\lceil \lambda p^e\rceil})^{[1/p^e]}\right)_{e\geq 1}$ is nondecreasing, and therefore
it stabilizes by the Noetherian property. The limit is the test ideal $\tau(Y,\fra^{\lambda})$.
For a discussion of various analogies between test ideals and multiplier ideals we refer to
\cite{HY}. The following is the main conjecture relating multiplier ideals and test ideals.

\begin{conjecture}\label{conj_introd2}
Let $Y$ be a nonsingular, connected variety over an algebraically closed
 field $k$ of characteristic zero, and $\fra$
a nonzero ideal on $Y$. Given a model for $Y$ and $\fra$ defined over a $\ZZ$-algebra of finite type $A$, contained in $k$, there is a dense set of closed points $S\subset \Spec A$, such that 
\begin{equation}\label{eq_conj_introd2}
\tau(Y_s,\fra_s^{\lambda})=\cJ(Y,\fra^{\lambda})_s
\end{equation}
for all $s\in S$ and all $\lambda\geq 0$.
Furthermore, if we have finitely many pairs as above
$(Y^{(i)}, \fra^{(i)})$, and corresponding models over $\Spec A$, then there is
a dense open subset of closed points in $\Spec A$ such that (\ref{eq_conj_introd2})
holds for each of these pairs.
\end{conjecture}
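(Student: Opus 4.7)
The plan is to derive Conjecture~\ref{conj_introd2} from Conjecture~\ref{conj_introd} by computing both ideals along a common log resolution and reducing the nontrivial containment to the bijectivity of Frobenius on top coherent cohomology of a finite list of smooth projective varieties. Fix a log resolution $\pi \colon X \to Y$ of $(Y,\fra)$ with $\fra \cdot \cO_X = \cO_X(-G)$, and let $E = \sum_i E_i$ be a simple normal crossings divisor supporting $G$ and $K_{X/Y}$. Since the statement is local on $Y$, one may compactify so that $X$ is smooth and projective over $k$ while preserving the SNC structure. Spreading out to a model over $A$, there is a dense open set of closed points $s$ for which $\pi_s$ remains a log resolution with the same combinatorial data. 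Because both $\tau(Y_s,\fra_s^\lambda)$ and $\cJ(Y,\fra^\lambda)_s$ jump at only finitely many rational values of $\lambda$, one reduces to checking the equality at finitely many $\lambda$, and the extension to finitely many pairs is handled by running the same argument in parallel.

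One of the two containments, $\tau(Y_s,\fra_s^\lambda) \subseteq \cJ(Y,\fra^\lambda)_s$, holds for $s$ in a dense open subset by the known results of Hara--Yoshida and Takagi. For the reverse containment, use the formula $\cJ(Y,\fra^\lambda)_s = \pi_*\cO_{X_s}(K_{X_s/Y_s} - \lfloor \lambda G_s \rfloor)$ together with a description of the test ideal via iterated Frobenius trace maps. Concretely, iterating $t^e_{X_s}\colon F^e_*\omega_{X_s} \to \omega_{X_s}$ twisted by pullbacks of $\fra_s^{\lceil \lambda p^e\rceil}$ yields natural maps whose union of images inside $\pi_*\cO_{X_s}(K_{X_s/Y_s} - \lfloor \lambda G_s\rfloor) \otimes \omega_{Y_s}$ recovers $\tau(Y_s,\fra_s^\lambda)\otimes \omega_{Y_s}$. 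Thus $\tau_s \supseteq \cJ_s$ becomes the surjectivity of these iterated trace maps for $e \gg 0$.

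By Grothendieck duality for $\pi_s$ composed with iterates of Frobenius, this surjectivity is dual to the injectivity of a Frobenius action on certain coherent cohomology groups $H^j(X_s,\cO_{X_s}(-D))$, where $D$ is supported on $E_s$ with bounded coefficients. Filtering such twists by the strata of $E$ via the short exact sequences $0 \to \cO(-E_i) \to \cO \to \cO_{E_i} \to 0$, and inducting on the number of components and the dimension, one reduces to the bijectivity of Frobenius on $H^{\dim Z}(Z_s,\cO_{Z_s})$ for $Z$ ranging over a finite list of smooth projective varieties, namely $X$ itself together with the irreducible components of the intersections $E_{i_1} \cap \cdots \cap E_{i_r}$. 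Applying Conjecture~\ref{conj_introd} to each of these, and packaging them via an auxiliary smooth projective variety constructed from them (for instance, a product, where Künneth lets one transfer information back to each factor), one obtains a single dense set of closed points of $\Spec A$ on which all the required bijectivities hold simultaneously.

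The main obstacle is the duality/stratification step: one must propagate the fractional boundary $\lfloor \lambda G_s \rfloor$ and the Frobenius power $p^e$ through Grothendieck duality and through the long exact sequences arising from the SNC filtration, so that in the end only Frobenius actions on structure sheaves of strata remain. A secondary subtlety is passing from the density of closed points provided by Conjecture~\ref{conj_introd} for each individual variety to a single dense set that works simultaneously for all finitely many strata and all jumping numbers; since intersections of dense sets need not be dense, this is handled by applying the conjecture to a single auxiliary smooth projective variety aggregating the required cohomological data.
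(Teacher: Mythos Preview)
Your proposal contains a genuine gap, and it is precisely the gap that the paper's main idea is designed to close.

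First, the reduction to finitely many $\lambda$ does not work. The candidate jumping numbers of $\cJ(Y,\fra^{\lambda})$ are indeed finite and independent of $s$, but the $F$-jumping numbers of $\tau(Y_s,\fra_s^{\lambda})$ depend on $p={\rm char}\,k(s)$ and need not lie among them. Knowing $\tau_s=\cJ_s$ at each candidate jumping number $\lambda_i$ does \emph{not} prevent $\tau_s$ from jumping strictly inside an interval $(\lambda_i,\lambda_{i+1})$ on which $\cJ_s$ is constant. In fact, if one could reduce to a fixed finite list of $\lambda$'s, then intersecting the finitely many open dense sets provided by the known Hara--Yoshida result for each fixed $\lambda$ would already prove Conjecture~\ref{conj_introd2} outright, with no appeal to Conjecture~\ref{conj_introd}. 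The whole difficulty of the problem is uniformity in $\lambda$.

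Second, and more seriously, the stratification step breaks down. On an ordinary log resolution the divisor $G$ has multiplicities $>1$, so the divisors $D$ arising from $\lfloor\lambda G\rfloor$ (and from the Frobenius twists) are not reduced. Peeling off components one at a time via
\[
0\to\cO_X(-(m{+}1)E_i)\to\cO_X(-mE_i)\to\cO_{E_i}(-mE_i\vert_{E_i})\to 0
\]
produces graded pieces that are \emph{nontrivial line bundles} on the strata, not structure sheaves. Conjecture~\ref{conj_introd} says nothing about the Frobenius action on $H^{\dim Z}(Z_s,\cL_s)$ for a nontrivial line bundle $\cL$, so the induction does not close. (There is not even a natural $p$-linear endomorphism of such groups in general.)

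The paper's proof circumvents both issues simultaneously by \emph{semistable reduction}. One first reduces to a principal ideal $(f)$ on an affine $Y$, then performs a cyclic base change $t\mapsto t^d$ on $\AAA^1$ so that, after a resolution $\pi\colon X\to Y'$ of the normalized pullback, $\pi^*(h)$ defines a \emph{reduced} simple normal crossings divisor $E$. Now the only surjectivity one needs is that of
\[
(\pi_s)_*\bigl(F^e_*(\omega_{X_s}(E_s))\bigr)\longrightarrow(\pi_s)_*(\omega_{X_s}(E_s)),
\]
which involves the single fixed reduced divisor $E$ and is completely independent of $\lambda$; this is where the stratification by the $E_J$ legitimately reduces to Conjecture~\ref{conj_introd} applied to the strata (Lemma~\ref{lem_SNC}, Corollaries~\ref{cor_lem_SNC}--\ref{cor2_lem_SNC}, Theorem~\ref{thm_relative}). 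The passage back from $Y'$ to $Y$ uses the Schwede--Tucker theorem on test ideals under finite covers (Theorem~\ref{finite_test}), and Lemma~\ref{main_ingredient} turns the single surjectivity above into $\tau=\cJ$ for \emph{all} $\lambda$ at once. Your outline omits semistable reduction entirely, and without it neither the finiteness in $\lambda$ nor the reduction to structure sheaves of strata can be justified.
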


Two things are known: first, under the assumptions in the conjecture, there is an open
subset of closed points in $\Spec A$ for which the inclusion ``$\subseteq$" in (\ref{eq_conj_introd2}) holds for all $\lambda$. This was proved in
\cite{HY}, and is quite elementary (we give a variant of the argument in \S 3, using the equivalent definition
in \cite{Schwede}). 
A deeper result, also proved in \cite{HY}, says that for a \emph{fixed} $\lambda$,
there is an open subset of closed points $s\in\Spec A$ such that equality holds
in (\ref{eq_conj_introd2}) for this $\lambda$. This relies on the same kind of arguments as in
\cite{Hara} and \cite{MS}, using the action of Frobenius on the de Rham complex, following
\cite{DI}. The key fact in the above conjecture is that we require the equality to hold
for all $\lambda$ at the same time. We mention that these two known results generalize the fact that
$(Y,\fra^{\lambda})$ is Kawamata log terminal if and only if for an open (or just dense) set of closed points
$S\subset \Spec A$ the pair $(Y_s,\fra_s^{\lambda})$ is strongly $F$-regular for all $s\in S$. The following is our
main result.

\begin{theorem}\label{thm_introd}
If Conjecture~\ref{conj_introd} holds, then Conjecture~\ref{conj_introd2} holds as well.
\end{theorem}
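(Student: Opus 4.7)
The plan is to extend the log-resolution / Frobenius-trace strategy of Hara and Hara--Yoshida, which handles a fixed $\lambda$, to a statement uniform in $\lambda$ by packaging the needed Frobenius injectivities on cohomology into Conjecture~\ref{conj_introd} applied to a suitable finite collection of auxiliary smooth projective varieties. First I would fix a log resolution $\pi\colon X\to Y$ of $(Y,\fra)$ with $\fra\cO_X=\cO_X(-G)$ and simple normal crossings divisor $E$ supporting $G$ and $K_{X/Y}$. Because the two ideals in question are local on $Y$, there is no loss in compactifying so that $X$ is smooth and projective. After spreading out over $\Spec A$, the easy inclusion $\tau(Y_s,\fra_s^\lambda)\subseteq\cJ(Y,\fra^\lambda)_s$ already holds simultaneously for all $\lambda$ on a dense open locus, so I only need to establish the reverse inclusion, for all $\lambda$ at once, on a dense set of closed points.

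Next, I would reduce to finitely many $\lambda$ using Skoda's theorem for multiplier ideals together with its analogue for test ideals from \cite{BMS}: once $\lambda\geq\dim Y$ one has $\cJ(Y,\fra^{\lambda})=\fra\cdot\cJ(Y,\fra^{\lambda-1})$ and $\tau(Y_s,\fra_s^{\lambda})=\fra_s\cdot\tau(Y_s,\fra_s^{\lambda-1})$, so by descending induction the question reduces to $\lambda\in[0,\dim Y]$, where only the finitely many jumping numbers $\lambda_1<\cdots<\lambda_r$ of $\cJ(Y,\fra^\bullet)$ matter; all of these lie in $\frac{1}{d}\ZZ$ for a common denominator~$d$ determined by the multiplicities of $G$ along the components of $E$.

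The third step is, for each $\lambda_j$, to convert the inclusion $\cJ(Y,\fra^{\lambda_j})_s\subseteq\tau(Y_s,\fra_s^{\lambda_j})$, via Schwede's description of the test ideal, into the surjectivity of certain iterated Cartier operators on $X_s$ twisted by divisors of the form $\lceil (p^e-1)\lambda_j G_s\rceil$ supported on the components of $E_s$. By Serre duality on $X_s$ these surjectivities are equivalent to bijectivity of Frobenius on finitely many cohomology groups $H^i(X_s,L)$, for a finite list of line bundles $L=L_1,\dots,L_N$ on $X$ determined by the $\lambda_j$. To deliver these from Conjecture~\ref{conj_introd}, which only addresses top cohomology with trivial coefficients, I would construct a cyclic (or abelian) cover $\rho\colon Z\to X$ ramified along the components of $E$ with indices dividing~$d$, resolving $Z$ to a smooth projective variety by a toric blow-up if necessary. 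The decomposition $\rho_*\cO_Z=\bigoplus_k M_k$ into isotypic line bundles gives $H^n(Z_s,\cO_{Z_s})=\bigoplus_k H^n(X_s,M_k)$, and by a careful choice of ramification the $M_k$ can be arranged so that each $L_j^{-1}$ (up to twist) appears as a summand; the Frobenius action respects this decomposition. Applying Conjecture~\ref{conj_introd} to $Z$ then yields bijectivity of Frobenius on $H^n(Z_s,\cO_{Z_s})$ for $s$ in a dense set, hence on each summand, which provides the required input. The finitely-many-pairs version of Conjecture~\ref{conj_introd2} follows by intersecting the dense sets obtained from each pair $(Y^{(i)},\fra^{(i)})$.

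The main obstacle I expect is the construction of the cover $\rho\colon Z\to X$ realising all $L_j$ as (twisted) summands simultaneously, together with its smoothing into a projective variety to which Conjecture~\ref{conj_introd} literally applies; and, more seriously, bridging the gap between the global bijectivity on $H^n(Z_s,\cO_{Z_s})$ granted by the conjecture and the sheaf-level Cartier surjectivity that the test ideal characterization demands. Closing that gap will need auxiliary characteristic-$p$ vanishing theorems (of Kawamata--Viehweg type), which themselves must be extracted from Conjecture~\ref{conj_introd} applied to further covers; this iterated use of the hypothesis is where I expect the technical heart of the proof to lie.
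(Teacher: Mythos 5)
Your proposal is genuinely different from the paper's, and more importantly it has an unfilled gap at precisely the point where the real difficulty lies. You start from a log resolution of $(Y,\fra)$, which produces a divisor $G$ with arbitrary multiplicities, and then the surjectivity of the trace maps you need involves twists by $\lceil(p^e-1)\lambda G_s\rceil$ that vary with both $\lambda$ and $e$. Handling those twists uniformly in $\lambda$ is exactly the content of the theorem: the fixed-$\lambda$ result of Hara--Yoshida already produces a $\lambda$-dependent open set, and the whole point is that these open sets cannot obviously be intersected over all $\lambda$. Your cyclic-cover step (4)--(5) is a plausible way to feed \emph{some} twisted Frobenius bijectivities into Conjecture~\ref{conj_introd}, but you candidly acknowledge that closing the loop requires characteristic-$p$ Kawamata--Viehweg-type vanishing and ``iterated use of the hypothesis,'' and you defer that step. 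As stated, this is not an argument; it is a restatement of the obstruction. There is no indication of how the infinitely many candidate twists (one for each $e$, even after bounding $\lambda$) are to be packaged into finitely many applications of Conjecture~\ref{conj_introd}, nor of how the global statement on $H^n(Z_s,\cO_{Z_s})$ descends to a \emph{local} surjectivity of the Cartier operator that the Schwede-style definition of $\tau$ requires.

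The paper's key move, which your proposal misses entirely, is to replace the log resolution by \emph{semistable reduction} (KKMS). After reducing to $Y$ affine nonsingular with $\fra=(f)$ principal, one base-changes $f\colon Y\to\AAA^1$ along $t\mapsto t^d$ and modifies so that the pullback of $f$ defines a \emph{reduced} simple normal crossings divisor $E$ on a smooth $X$. Because $E$ is reduced, the trace map whose surjectivity is needed is just $F^e_*\big(\omega_X(E)\big)\to\omega_X(E)$, with no $\lambda$- or $e$-dependent twist at all; this single surjectivity (after pushing forward, Theorem~\ref{thm_relative}) then yields $\tau=\cJ$ for \emph{all} $\lambda$ simultaneously via Lemma~\ref{main_ingredient}. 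The price is that the base change produces a possibly singular $Y'$ (normalization of a component of $Y\times_{\AAA^1}\AAA^1$), so one must transfer the comparison across a finite morphism; this uses Proposition~\ref{invar_finite_mult} for multiplier ideals and the Schwede--Tucker transformation rule (Theorem~\ref{finite_test}) for test ideals. Finally, the surjectivity of the pushed-forward trace map for a reduced SNC divisor reduces, by a \v{C}ech/stratification argument (Lemma~\ref{lem_SNC}, Corollary~\ref{cor_lem_SNC}) plus Serre duality, to Frobenius bijectivity on $H^{\dim}(\cdot,\cO)$ of \emph{finitely many} smooth projective varieties (the strata and a projective compactification), which is exactly what Conjecture~\ref{conj_introd} supplies. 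So the semistable reduction is not a convenience but the mechanism that makes the input finite and $\lambda$-independent; your log-resolution plus cyclic-cover scheme does not achieve this, and without a concrete replacement the proposal does not establish the theorem.
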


It is easy to reduce the assertion in Conjecture~\ref{conj_introd2} to the case when $Y$ is affine
and $\fra$ is a principal ideal $(f)$.
The usual approach for comparing the multiplier ideals of $\fra$ with the test ideals of a reduction mod $p$
of $\fra$ is to start
with a log resolution of $\fra$. Our key point is to start instead by doing semistable reduction.
This allows us to reduce at the end of the day to understanding a certain reduced divisor with simple normal crossings on a nonsingular variety. 

One can formulate Conjecture~\ref{conj_introd2} in a more general setting. For example, one can only assume that $Y$ is normal and $\QQ$-Gorenstein, or even more generally, work with a pair $(Y,D)$ such that
$K_Y+D$ is $\QQ$-Cartier. Furthermore, one can start with several ideals $\fra_1,\ldots,\fra_r$,
and consider mixed multiplier ideals and test ideals. However, our method based on semistable 
reduction does not allow us to handle at present these more general versions of Conjecture~\ref{conj_introd2}.

The paper is organized as follows. In the next section we recall some general facts about $p$-linear maps of vector spaces over perfect fields, and review the general setting for reducing from characteristic zero to positive characteristic. In \S\ref{test_and_multiplier} we recall the definition and some useful properties of multiplier ideals  and test ideals. 
While in our main result we consider a nonsingular ambient variety, at an intermediate step 
we also need to work on a singular variety. Therefore our treatment of multiplier ideals and test ideals in \S\ref{test_and_multiplier} is done in this general setting. 
In Section~\ref{conjectural_connection} we 
state and discuss a more general version of Conjecture~\ref{conj_introd2}.
Section~\ref{conjecture_Frobenius_action} is devoted to a discussion of Conjecture~\ref{conj_introd},
and to several consequences that would be needed later. In the last Section~\ref{connection_conjectures} we prove our main result, showing that Conjecture~\ref{conj_introd} implies Conjecture~\ref{conj_introd2}.

\subsection*{Acknowledgment} 
We are indebted to Bhargav Bhatt, H\'{e}l\`{e}ne Esnault, and
Johannes Nicaise 
for several inspiring discussions. We would also like to thank Karl Schwede and the 
anonymous referee, whose thoughtful comments
helped improve the paper. Part of this work was done during the second author's visit
to Ann Arbor. We are grateful to University of Michigan and to the David and Lucile Packard foundation for making this visit possible.

\section{A review of basic facts}\label{review}

In this section we recall some well-known facts that will frequently come up during the rest of the paper. In particular, we discuss the general setting, and set the notation for reduction mod $p$.

\subsection{$p$-linear maps on vector spaces}\label{p_linear}
Let $k$ be a perfect field of characteristic $p>0$, and let $V$ be a finite-dimensional
vector space over $k$. Let $\phi\colon V\to V$ be a $p$-linear map, that is, a morphism of abelian groups such that $\phi(au)=a^p\phi(u)$ for all $a\in k$ and $u\in V$. The following properties of such a map are well-known; for a proof see for example \cite[Lemma~3.3]{CL}. 

The vector space $V$ can be uniquely decomposed as a direct sum of subspaces preserved by 
$\phi$, 
$V=V_{\rm ss}\oplus V_{\rm nil}$, where 
\begin{enumerate}
\item[1)] $\phi$ is nilpotent on $V_{\rm nil}$, that is, $\phi^N=0$ for some $N$.
\item[2)] $\phi$ is bijective on $V_{\rm ss}$.
\end{enumerate}
One says that $\phi$ is \emph{semisimple} if $V=V_{\rm ss}$. This is equivalent with $\phi$
being injective, or equivalently, surjective.

\begin{example}\label{ex1_1}
If $k$ is a finite field with $p^e$ elements, then $\phi^e$ is a $k$-linear map. In this case $\phi$
is semisimple if and only if $\phi^e$ is an isomorphism.
\end{example}

If $\phi$ is as above, and
$k'$ is a perfect  field extension of $k$, then we get an induced
$p$-linear map $\phi'\colon V'\to V'$, where
$V'=V\otimes_kk'$. This is given by
$\phi'(v\otimes\lambda)=\phi(v)\otimes\lambda^p$. We have
$V'_{\rm ss}=V_{\rm ss}\otimes_kk'$ and $V'_{\rm nil}
=V_{\rm nil}\otimes_kk'$. In particular, $\phi'$ is semisimple if and only if
$\phi$ is semisimple.

These considerations apply, in particular, if we take $k'=\overline{k}$, an
algebraic closure of $k$. If $\overline{\phi}\colon
\overline{V}\to \overline{V}$ is the induced $p$-linear map over $\overline{k}$, then
$\overline{V}^{\overline{\phi}=1}:=\{u\in \overline{V}\mid \overline{\phi}(u)=u\}$
is an $\FF_p$-vector subspace of $\overline{V}$ such that 
\begin{equation}\label{eq1_1}
\overline{V}_{\rm ss}=\overline{V}^{\overline{\phi}=1}\otimes_{\FF_p}\overline{k}.
\end{equation}
In particular, we have $\dim_{\FF_p}(\overline{V}^{\overline{\phi}=1})\leq\dim_k(V)$,
with equality if and only if $\phi$ is semisimple. 

Note that the morphism of abelian groups $1-\overline{\phi}$ is surjective on 
$\overline{V}_{\rm ss}$ by (\ref{eq1_1}), and it is clearly bijective on $\overline{V}_{\rm nil}$.
In particular, $1-\overline{\phi}$ is surjective, and its kernel is $\overline{V}^{\overline{\phi}=1}$.

\begin{example}\label{ex1_2}
Let $X$ be a complete scheme of finite type over $k$. The absolute Frobenius morphism $F\colon X\to X$
is the identity on the underlying topological space, and the corresponding  morphism of sheaves of rings $\cO_X\to\cO_X$ is given by $u\to u^p$.
Since $k$ is perfect,
$F$ is a finite morphism. It induces a $p$-linear map $F\colon H^i(X,\cO_X)\to H^i(X,\cO_X)$
for every $i\geq 0$. After extending the scalars to an algebraic closure $\overline{k}$, we obtain the corresponding $p$-linear map $F\colon H^i(X_{\overline{k}},\cO_{X_{\overline{k}}})\to 
H^i(X_{\overline{k}},\cO_{X_{\overline{k}}})$, where $X_{\overline{k}}=
X\times_{\Spec k}\Spec \overline{k}$ 
(note that in this case we still write $F$ instead of $\overline{F}$). 

On the other hand, we have the Artin-Schreyer sequence in the \'{e}tale topology
$$0\to \FF_p\to \cO_{X_{\overline{k}}} \overset{1-F}\to \cO_{X_{\overline{k}}} \to 0.$$
This induces exact sequences
$$0\to H^i_{\rm \acute{e}t}(X_{\overline{k}},\FF_p)\to H^i(X_{\overline{k}},\cO_{X_{\overline{k}}})
\overset{1-F}\to 
H^i(X_{\overline{k}},\cO_{X_{\overline{k}}})\to 0$$
for every $i\geq 0$. In particular, $F$ is semisimple on $H^i(X,\cO_X)$ if and only if 
$\dim_{\FF_p}H^i_{\rm \acute{e}t}(X_{\overline{k}},\FF_p)=h^i(X,\cO_X)$. 
\end{example}

\begin{remark}\label{rem_Frob_product}
Let $\phi\colon V\to V$ and $\psi\colon W\to W$ be $p$-linear maps as above.
Note that we have induced $p$-linear maps on $V\oplus W$ and $V\otimes W$, and
$(V\oplus W)_{\rm ss}=V_{\rm ss}\oplus W_{\rm ss}$ and $(V\otimes W)_{\rm ss}
=V_{\rm ss}\otimes W_{\rm ss}$.
\end{remark}

\begin{lemma}\label{rem_Frob_exact}
Let $\phi\colon V\to V$ be a $p$-linear map as above.
\begin{enumerate}
\item[i)] If $\phi$ is semisimple, and if $W$ is a linear subspace of $V$ such that
$\phi(W)\subseteq W$, then the induced $p$-linear maps on $W$ and $V/W$ are semisimple.
\item[ii)] If we have an exact sequence $V'\to V\to V''$, and $p$-linear maps
$\phi'\colon V'\to V'$ and $\phi''\colon V''\to V''$ that are compatible with $\phi$ in the obvious sense,
and if $\phi'$ and $\phi''$ are semisimple, then so is $\phi$. 
\end{enumerate}
\end{lemma}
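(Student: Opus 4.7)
The plan is to deduce both statements from the equivalent characterization that a $p$-linear map on a finite-dimensional space is semisimple if and only if it is injective (equivalently, surjective), together with the dimension count $\dim_{\FF_p} \overline V^{\overline\phi=1}\le \dim_k V$ with equality precisely in the semisimple case.

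For part i), I would argue directly. If $\phi$ is semisimple then $\phi$ is both injective and surjective on $V$. For any $\phi$-invariant subspace $W\subseteq V$, the restriction $\phi|_W$ inherits injectivity from $\phi$, so it is semisimple. For the quotient $V/W$, the induced map inherits surjectivity from $\phi$, so it is also semisimple. Both verifications are immediate from the finite-dimensional equivalence recalled in the paragraph preceding Example~\ref{ex1_1}.

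For part ii), I would first reduce to the short-exact case. Let $K=\ker(V\to V'')=\operatorname{image}(V'\to V)$, and let $K'=\ker(V'\to V)$; both are preserved by the relevant maps thanks to the compatibility hypothesis, so $K\cong V'/K'$ naturally carries a $p$-linear map induced from $\phi'$, while $V/K$ embeds into $V''$ as a $\phi''$-stable subspace. Applying part i) to $(\phi',K')$ and to $(\phi'',V/K\hookrightarrow V'')$ shows that $\phi$ is semisimple on both $K$ and $V/K$. This reduces everything to the assertion: in a short exact sequence $0\to K\to V\to V/K\to 0$ with $\phi$ semisimple on the outer terms, $\phi$ is semisimple on $V$.

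To finish, I would extend scalars to $\overline k$ and apply the snake lemma to $1-\overline\phi$ acting on $0\to\overline K\to\overline V\to\overline{V/K}\to 0$. As observed in the discussion surrounding \eqref{eq1_1}, $1-\overline\phi$ is surjective on the semisimple space $\overline K$, so the resulting long exact sequence collapses to
\[
0\longrightarrow \overline K^{\overline\phi=1}\longrightarrow \overline V^{\overline\phi=1}\longrightarrow \overline{V/K}^{\overline\phi=1}\longrightarrow 0.
\]
Counting $\FF_p$-dimensions and using the semisimplicity of the outer terms yields
\[
\dim_{\FF_p}\overline V^{\overline\phi=1}=\dim_k K+\dim_k(V/K)=\dim_k V,
\]
which forces $\phi$ to be semisimple by the equality criterion. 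The only real subtlety is checking that $K'$ and $V/K$ really inherit the required invariance and that the cokernel in the snake lemma vanishes on $\overline K$; once those are in hand, the dimension count closes the argument cleanly.
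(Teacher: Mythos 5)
Your proof is correct, and part i) coincides with the paper's argument verbatim. For part ii), your reduction to the short exact sequence $0\to K\to V\to V/K\to 0$ (with $\phi$ semisimple on the outer terms, via part i)) is the same as what the paper does. Where you diverge is in the finishing step: you extend scalars to $\overline k$, apply the snake lemma to $1-\overline\phi$, and deduce semisimplicity from the $\FF_p$-dimension count for $\overline V^{\overline\phi=1}$. This works --- the point that $1-\overline\phi$ is always surjective (established in \S\ref{p_linear} even without the semisimplicity hypothesis) is exactly what makes the long exact sequence collapse --- but it is more machinery than needed. The paper instead observes that once you have a short exact sequence of abelian groups with the maps on the two ends bijective, the middle map is bijective by the 5-Lemma; since $p$-linear maps are morphisms of abelian groups, this applies directly, and bijectivity is semisimplicity. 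Your route has the mild pedagogical appeal of illustrating the fixed-point criterion \eqref{eq1_1} in action, but the 5-Lemma is both shorter and avoids any base change.
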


\begin{proof}
If $\phi$ is bijective, then clearly the induced map on $W$ is injective, and the induced map
on $V/W$ is surjective. This implies the assertion in i). In order to prove ii), we use i) to reduce to the case when we have a short exact sequence 
$$0\to V'\to V\to V''\to 0.$$
In this case $\phi'$ and $\phi''$ being bijective implies $\phi$ is bijective by the 5-Lemma.
\end{proof}

\subsection{Reduction mod $p$}\label{reduction_mod_p}
We review the formalism for passing from characteristic zero to positive characteristic.
Let $k$ be a fixed field of characteristic zero. Given a scheme $X$ of finite type over $k$,
there is a subring $A\subset k$ of finite type over $\ZZ$, a scheme $X_A$ of finite type over $A$, and an
isomorphism $X\simeq X_A\times_{\Spec A}\Spec k$. 
Note that we may always replace $A$ by $A_a$,
for some nonzero $a\in A$, and $X_A$ by the corresponding open subscheme. 
It follows from Generic Flatness (see \cite[Theorem~14.4]{Eis}) 
that we may (and will) assume that $X_A$
is flat over $A$. We will refer to $X_A$ as a \emph{model} of $X$ over $A$.
 If $A$ and $B$ are two such rings, and if $X_A$ and $X_B$ 
are models of $X$ over $A$ and $B$, respectively, then there is a subring $C$ of $k$ containing both $A$ and $B$, finitely generated over $\ZZ$, and an isomorphism $X_A\times_{\Spec A}\Spec C\simeq X_B\times_{\Spec B}
\Spec C$ compatible after base-change to $\Spec k$ with the defining isomorphisms for
$X_A$ and $X_B$. 
Given a model $X_A$ for $X$ as above, and a point $s\in \Spec A$, we denote 
by $X_s$ the fiber of $X_A$ over $s$. 
This is a scheme of finite type over the residue field $k(s)$ of $s$.
Note that if $s$ is a closed point, then
$k(s)$ is a finite field. 

We will consider properties ${\mathcal P}$ of schemes of finite type over finite fields,
such that given a scheme $W$ of finite type over the finite field $k$, and a finite field extension
$k'$ of $k$,  ${\mathcal P}(W)$ holds if and only if ${\mathcal P}(W\times_{\Spec k}\Spec k')$
holds. 
With $X_A$ as above, we say that 
${\mathcal P}(X_s)$ holds for general closed points $s\in \Spec A$
if there is an open subset $U$ of $\Spec A$ such that ${\mathcal P}(X_s)$ holds for all
closed points  $s\in U$.
In this case, after replacing $A$ by a suitable localization $A_a$, we may assume that 
${\mathcal P}(X_s)$ holds for all closed points $s$. We will 
often be interested in properties that are expected to only hold for a dense set of closed points
$s\in\Spec A$.

\begin{remark}\label{enough_nonempty}
With ${\mathcal P}$ as above, note that both conditions 
\begin{enumerate}
\item[i)] ${\mathcal P}(X_s)$ holds for general closed points $s\in {\rm Spec} A$
\item[ii)] ${\mathcal P}(X_s)$ holds for a dense set of closed points $s\in {\rm Spec} A$
\end{enumerate}
are independent of the choice of a model. Indeed, if $\alpha\colon\Spec C\to\Spec A$ is induced by the inclusion
$A\subset C$ of finitely generated $\ZZ$-algebras, then $\alpha$ takes closed points to closed points, and the image of $\alpha$ contains a (dense) open subset. Furthermore, 
the image or inverse image of a dense subset has the same property. 

On the other hand, in order to show that ii) above holds, it is enough to show that for every model $X_A$, there is at least one closed point $s\in\Spec A$ such that ${\mathcal P}(X_s)$ holds.
\end{remark}

If $X_A$ is a model for $X$ as above, and if $\cF$ is a coherent sheaf on $X$, then
after possibly replacing $A$ by a larger ring we may assume that there is a coherent sheaf
$\cF_A$ on $X_A$ whose pull-back to $X$ is isomorphic to $\cF$. 
It follows from Generic Flatness that after replacing
$A$ by some localization $A_a$, we may (and will) assume that $\cF_A$
is flat over $A$. For a point $s\in\Spec A$, we denote by $\cF_s$ the restriction of
$\cF_A$ to the fiber over $s$. 

If
$\phi\colon \cF\to \cG$ is a morphism of coherent sheaves, after possibly enlarging $A$
we may assume that $f$ is induced by a morphism of sheaves $\phi_A\colon\cF_A\to\cG_A$.
In particular, for every point $s\in \Spec A$, we get an induced morphism
$\phi_s\colon\cF_s\to\cG_s$. 
Since we may assume that ${\rm Coker}(\phi_A)$ and ${\rm Im}(\phi_A)$ are flat over $A$, it follows
that we may assume that ${\rm Coker}(\phi_s)={\rm Coker}(\phi)_s$, ${\rm Im}(\phi_s)
={\rm Im}(\phi)_s$, and ${\rm Ker}(\phi_s)={\rm Ker}(\phi)_s$ for every point 
$s\in \Spec\,A$. In particular, if $\phi$ is injective
or surjective, then so are all $\phi_s$. It follows easily from this that if $\cF$ is an ideal, or if it is locally free, then so are all $\cF_s$ (as well as $\cF_A$). 

Given a morphism $f\colon X\to Y$ of schemes of finite type over $k$,
and models $X_A$ and $Y_B$ of $X$ and $Y$, respectively, after possibly enlarging
both $A$ and $B$ we may assume that $A=B$ and that 
$f$ is induced by a morphism 
$f_A\colon X_A\to Y_A$ of schemes over $A$.  If $s\in\Spec A$ is a point, then we get
a corresponding morphism $f_s\colon X_s\to Y_s$ of schemes over $k(s)$. 
If $f$ is either of the following: a closed (open) immersion, finite or projective, then we may assume that the same holds for $f_A$. In particular, the same will hold for all $f_s$ . 

Suppose now that $f\colon X\to Y$ is a proper morphism, and $\cF$ is a coherent sheaf on
$X$. If $f_A\colon X_A\to Y_A$ and $\cF_A$ are as above, arguing as in 
\cite[Section III. 12]{Hartshorne} one can show that $\cF_A$ satisfies \emph{generic base-change}.
In other words, after replacing $A$ by $A_a$ for some nonzero $a\in A$, we may assume that for all 
$s\in\Spec A$, the canonical morphism
$$\left(R^i(f_A)_*(\cF_A)\right)_s\to R^i(f_s)_*(\cF_s)$$
is an isomorphism. 

Given a model $X_A$ of $X$, it is easy to deduce from Noether's Normalization Theorem that
all fibers of $X_A\to\Spec A$ have dimension $\leq\dim(X)$. It follows from the Jacobian Criterion
for smoothness that if $X$ is an irreducible regular scheme, then we may assume that $X_A$ is smooth
over $\Spec A$ of relative dimension equal to $\dim(X)$. In particular, $X_s$ is smooth over 
$k(s)$ for every point $s\in\Spec A$. In general, $X_s$ might not be connected; however,
if we assume that $k$ is algebraically closed, then $X_s$ will be connected, since the
generic fiber of $X_A$ over $\Spec A$ is geometrically connected.

For simplicity, from now on we assume that $k$ is algebraically closed.
Suppose that $Y$ is an arbitrary reduced scheme over $k$, and let us consider a resolution of singularities of $Y$, that is, a projective birational morphism $f\colon X\to Y$, with $X$ regular. 
We may choose a morphism of models $f_A\colon X_A\to Y_A$ that is projective, birational, and with
$X_A$ smooth over $\Spec A$. We may also assume that $\Spec A$ is smooth over 
$\Spec \ZZ$. Since $\cO_Y\hookrightarrow f_*(\cO_X)$, we may assume that
$\cO_{Y_A}\hookrightarrow (f_A)_*(\cO_{X_A})$.  In particular, $Y_A$ is reduced.
Furthermore, by
generic base-change we may assume that $\cO_{Y_s}\hookrightarrow (f_s)_*(\cO_{X_s})$ for every
$s\in\Spec A$. In particular, $Y_s$ is reduced, and if $Y$ is irreducible, then so are all
$Y_s$ (here we make use of the assumption that $k$ is algebraically closed). We also see that $\dim(Y_s)=\dim(Y)$ for all $s$, since we know this property 
for $X$. Similarly, if $Y$ is normal, then $\cO_Y= f_*(\cO_X)$, and arguing as above we may assume that $Y_A$ and all $Y_s$ are normal.

If $D=a_1D_1+\ldots+a_rD_r$ is a Weil divisor on $Y$, then we may assume that we have 
prime divisors $(D_i)_A$ on $Y_A$, and let $D_A:=\sum_ia_i (D_i)_A$. After possibly replacing 
$A$ by a localization $A_a$, we may assume that for every $s\in \Spec\,A$ the fiber
$(D_i)_s$ is a prime divisor on $Y_s$, and we get the divisor $D_s=\sum_ia_i (D_i)_s$.

In particular, if $Y$ is irreducible and normal, we may consider $K_Y$, a Weil divisor unique up to linear equivalence, whose restriction to the nonsingular locus
$Y_{\rm sm}$ is a divisor corresponding to $\omega_{Y_{\rm sm}}$. We write $K_{Y_A}$
for $(K_Y)_A$. If $U=Y_{\rm sm}$, then we may assume that the corresponding open
subset $U_A\subset Y_A$ is smooth over $A$, and $K_{Y_A}$ is a divisor whose restriction
to $U_A$ corresponds to
$\Omega^n_{U_A/A}$, where $n=\dim(Y)$. We may therefore assume that for every $s\in\Spec A$, the restriction of $K_{Y_A}$ to $Y_s$ gives a canonical divisor $K_{Y_s}$.

\section{Test ideals and multiplier ideals}\label{test_and_multiplier}

\subsection{Multiplier ideals}\label{mult_ideals}
We start by recalling the definition of multiplier ideals. For details, basic properties, and
further results we refer to \cite{positivity}.
Let $k$ be an algebraically closed field of characteristic zero, and $Y$ an irreducible normal scheme of finite type over $k$. 
 We consider a Weil divisor
$D$ on $Y$ such that $K_Y+D$ is Cartier\footnote{One can assume that $D$ is just a $\QQ$-divisor such that some multiple of $K_Y+D$ is Cartier; however, we will not need this level of generality.}. Given a nonzero ideal $\fra$ on $Y$,
we define the multiplier ideals $\cJ(Y,D,\fra^{\lambda})$ for $\lambda\in\RR_{\geq 0}$, as follows.

Recall first that given any birational morphism $\pi\colon X\to Y$, with $X$ normal, there is a unique
divisor
$D_X$ on $X$ with the following two properties:
\begin{enumerate}
\item[i)] $K_X+D_X$ is linearly equivalent with $\pi^*(K_Y+D)$ (hence, in particular, it is
Cartier).
\item[ii)] For every non-exceptional prime divisor $T$ on $X$, its coefficient in $D_X$ 
is equal to
its coefficient in the strict transform $\widetilde{D}$ of $D$.
\end{enumerate}
Note that $D_X$ is supported on $\widetilde{D}+{\rm Exc}(\pi)$, where ${\rm Exc}(\pi)$ is the exceptional locus of $\pi$. 

Suppose now that $\pi\colon X\to Y$ is a log resolution of the triple $(Y,D,\fra)$. This means that $\pi$ is projective and birational, $X$ is nonsingular, $\fra\cdot\cO_X=\cO_X(-G)$ for a divisor $G$, 
${\rm Exc}(\pi)$ is a divisor, and $E:=\widetilde{D}+{\rm Exc}(\pi)+G$ has simple normal crossings.  
With this notation, we have
\begin{equation}\label{eq_mult}
\cJ(X,D,\fra^{\lambda}):=\pi_*\cO_X(-D_X-\lfloor\lambda\cdot G\rfloor).
\end{equation}
Recall that if $T=\sum_ib_iT_i$ is an $\RR$-divisor, then $\lfloor T\rfloor:=\sum_i\lfloor b_i\rfloor
T_i$, where $\lfloor b_i\rfloor$ is the largest integer $\leq b_i$.
When $\fra=(f)$ is a principal ideal, then we simply write $\cJ(X,D,f^{\lambda})$.
Note that $\cJ(X,D,\fra^{\lambda})$ is in general only a fractional ideal. However, if $D$ is effective,
then all components of $D_X$ with negative coefficient are exceptional. Therefore in this case
$\cJ(X,D,\fra^{\lambda})$ is an ideal. 

It is a basic fact that the above definition is independent of resolution. It follows from 
(\ref{eq_mult}) that $\cJ(Y,D,\fra^{\lambda})\subseteq\cJ(Y,D,\fra^{\mu})$ if $\lambda>\mu$. 
Furthermore, given any $\lambda\geq 0$, there is $\epsilon>0$ such that 
$\cJ(Y,D,\fra^{\lambda})=\cJ(Y,D,\fra^{\mu})$ for all $\mu$ with $\lambda\leq\mu \leq
\lambda+\epsilon$.  One says that $\lambda>0$ is a \emph{jumping number} of $(Y,D,\fra)$
if $\cJ(Y,D,\fra^{\lambda})\neq\cJ(Y,D,\fra^{\mu})$ for every $\mu<\lambda$. Note that if we write
$G=\sum_{i=1}^Nb_iE_i$,
then for every jumping number $\lambda$ we must have
\begin{equation} \label{candidate_jump}
\lambda b_i\in\ZZ\,\,\,\text{for some}\,\,i\leq N\,\,\text{with}\,\,b_i\neq 0
\end{equation}
(if $\lambda$ satisfies this property, we call it a \emph{candidate jumping number}).
 In particular, the set of jumping numbers of $(X,D,\fra)$ is a discrete subset of $\QQ_{>0}$. 

We now recall a few properties of multiplier ideals that will come up later. The following result is 
\cite[Theorem 9.2.33] {positivity}. The proof uses the definition of multiplier ideals and the independence of resolutions (while the statement therein requires the varieties to be nonsingular,
the same proof works in the general setting) .

\begin{proposition}\label{invar_birat_mult}
If $\pi\colon X\to Y$ is any projective, birational morphism, with $X$ normal, and if $\fra'=\fra\cdot\cO_{X}$, then for every $\lambda\in\RR_{\geq 0}$ we have
$$\cJ(Y,D,\fra^{\lambda})=\pi_*\cJ(X,D_{X},(\fra')^{\lambda}).$$
\end{proposition}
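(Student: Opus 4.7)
The plan is to establish the identity by computing both sides on a sufficiently good model that dominates both $X$ and $Y$. Concretely, by Hironaka's theorem we can choose a projective birational morphism $\mu\colon W\to X$ with $W$ nonsingular such that $\mu$ is simultaneously a log resolution of $(X,D_X,\fra')$ and the composition $\rho:=\pi\circ\mu\colon W\to Y$ is a log resolution of $(Y,D,\fra)$. (Any log resolution of the triple $(X,\,D_X+\mathrm{Exc}(\pi)_{\mathrm{red}}+\widetilde D,\,\fra')$ obtained after sufficiently many further blow-ups will do; SNC on $W$ of the union of the strict transforms and of $\mathrm{Exc}(\rho)$ can be arranged because $\mathrm{Exc}(\rho)\subseteq \mu^{-1}(\mathrm{Exc}(\pi))\cup\mathrm{Exc}(\mu)$.)

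The key compatibility to check is that the ``relative boundary'' $D_W$ constructed from $Y$ via $\rho$ coincides with the ``relative boundary'' $(D_X)_W$ constructed from $X$ via $\mu$ applied to the divisor $D_X$ on the normal variety $X$. This is immediate from the characterization in (i)--(ii) on p.~\pageref{invar_birat_mult}: pulling back the linear equivalence $K_X+D_X\sim\pi^*(K_Y+D)$ by $\mu$ gives
\[
K_W+(D_X)_W\sim \mu^*(K_X+D_X)\sim \mu^*\pi^*(K_Y+D)=\rho^*(K_Y+D),
\]
and on non-$\rho$-exceptional prime divisors the coefficient equals that of the strict transform $\widetilde D$ (since such divisors are also non-$\mu$-exceptional and the strict transforms of $D$ and $D_X$ coincide on $W$). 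Uniqueness of $D_W$ then forces $(D_X)_W=D_W$. Likewise, $\fra\cdot\cO_W=(\fra\cdot\cO_X)\cdot\cO_W=\fra'\cdot\cO_W$, so the divisor $G$ cut out by $\fra\cdot\cO_W$ agrees with the one cut out by $\fra'\cdot\cO_W$.

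With these compatibilities in hand, the proposition reduces to a one-line computation using the definition (\ref{eq_mult}):
\[
\pi_*\cJ(X,D_X,(\fra')^{\lambda})=\pi_*\mu_*\cO_W\!\left(-D_W-\lfloor\lambda G\rfloor\right)=\rho_*\cO_W\!\left(-D_W-\lfloor\lambda G\rfloor\right)=\cJ(Y,D,\fra^{\lambda}).
\]
Independence of the chosen resolution, asserted for both sides just before the proposition, guarantees that these computations give the correct ideals regardless of which common resolution $W$ was chosen.

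The main potential obstacle is simply the existence of a common log resolution $W$ dominating both triples; this is where one uses that we are in characteristic zero so that Hironaka applies, and also that $X$, although possibly singular, is normal so that $D_X$ makes sense and the characterization by (i)--(ii) is available. Once this is set up, the matching of $D_W$ with $(D_X)_W$ and of the two definitions of $G$ is essentially formal, and the conclusion follows from functoriality of push-forward along $\rho=\pi\circ\mu$.
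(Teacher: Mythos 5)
Your proof is correct and takes exactly the approach the paper points to: the paper gives no written proof of its own but cites Lazarsfeld's Theorem 9.2.33 and remarks that the same argument (the definition of multiplier ideals together with independence of the resolution) goes through for normal $X$. Your reconstruction --- take a common log resolution $W$ dominating both triples, verify via the defining conditions (i)--(ii) that the relative boundary $D_W$ computed from $Y$ via $\rho=\pi\circ\mu$ agrees with $(D_X)_W$ computed from $X$ via $\mu$, note $\fra\cdot\cO_W=\fra'\cdot\cO_W$, and conclude by $\rho_*=\pi_*\mu_*$ --- is precisely that argument, so this is the same proof.
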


We now consider a finite surjective morphism $\mu\colon Y'\to Y$, with $Y'$ normal and 
irreducible, and put
$\fra'=\fra\cdot\cO_{Y'}$. In this case there is an open subset $U\subseteq Y$ such that 
$\codim(Y\smallsetminus U,Y)\geq 2$, and both $U$ and $V=\phi^{-1}(U)$ are nonsingular
(for example, one can take $U=Y_{\rm sm}\smallsetminus \mu(Y'\smallsetminus Y'_{\rm sm})$. 
In this case both $K_{V/U}$ and $\mu^*(D\vert_U)$ are well-defined divisors on $V$.
We denote by $D_{Y'}$ the unique Weil divisor on $Y'$ whose restriction to $V$
is $\mu^*(D\vert_U)-K_{V/U}$. Note that $K_{Y'}+D_{Y'}$ is linearly equivalent with $\mu^*(K_Y+D)$, hence
in particular it is
Cartier. For an integral scheme
$W$, we denote by $K(W)$ the function field of $W$.

\begin{proposition}\label{invar_finite_mult}
With the above notation, for every $\lambda\in\RR_{\geq 0}$ we have
$$\cJ(Y,D,\fra^{\lambda})=\mu_*\cJ(Y',D_{Y'}, (\fra')^{\lambda})\cap K(Y).$$
\end{proposition}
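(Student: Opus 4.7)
My strategy is to use birational invariance (Proposition~\ref{invar_birat_mult}) to transport $\cJ(Y',D_{Y'},(\fra')^\lambda)$ to a log resolution of $(Y,D,\fra)$, where the claim reduces to a codimension-one ramification computation and a standard log-canonicity bound.

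Choose a log resolution $\pi_0\colon X_0\to Y$ of $(Y,D,\fra)$, and let $Z$ be the normalization of $X_0\times_Y Y'$, giving a commutative square
\[
\xymatrix{Z\ar[r]^\sigma\ar[d]_\nu&Y'\ar[d]^\mu\\X_0\ar[r]_{\pi_0}&Y}
\]
with $\sigma$ birational and $\nu$ finite. Since $X_0$ is smooth, $\fra\cO_Z=\cO_Z(-\nu^*G_{X_0})$ is locally principal. Setting $D_Z:=\nu^*D_{X_0}-K_{Z/X_0}$, tame ramification in characteristic zero gives $\mathrm{coeff}_{\frm}(K_{Z/X_0})=r_{\frm}-1$ at each codimension-one prime $\frm$ of $Z$, and one checks (by a direct coefficient comparison on non-$\sigma$-exceptional primes) that $D_Z$ agrees with the divisor produced from $D_{Y'}$ by the paper's construction applied to $\sigma$. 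By Proposition~\ref{invar_birat_mult}, the compatibility $\mu\sigma=\pi_0\nu$, and the fact that intersecting against $K(Y)=K(X_0)$ commutes with the birational $\pi_{0*}$, the assertion reduces to
\[
\cO_{X_0}(-D_{X_0}-\lfloor\lambda G_{X_0}\rfloor)=\nu_*\cJ(Z,D_Z,(\fra\cO_Z)^\lambda)\cap K(X_0)
\]
as fractional ideals on $X_0$.

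For the inclusion ``$\subseteq$'', work at a codim-one prime $P$ of $X_0$ with $d=\mathrm{coeff}_P(D_{X_0})$ and $b=\mathrm{coeff}_P(G_{X_0})\in\ZZ_{\geq 0}$: since $Z$ is smooth at each prime $\frm_i$ of $Z$ over $P$ and $\fra\cO_Z$ is principal there, no blow-up is needed, so $\nu^*f$ lies in $\cJ(Z,\ldots)_{\frm_i}$ iff $r_iv_P(f)\geq r_id-(r_i-1)+\lfloor\lambda r_ib\rfloor$, where $r_i$ is the ramification index. The arithmetic identity
\[
\Bigl\lceil\tfrac{\lfloor\lambda rb\rfloor-r+1}{r}\Bigr\rceil=\lfloor\lambda b\rfloor\qquad(r\in\ZZ_{\geq 1},\,b\in\ZZ_{\geq 0},\,\lambda\geq 0),
\]
proved by writing $\lfloor\lambda rb\rfloor=r\lfloor\lambda b\rfloor+s$ with $0\leq s\leq r-1$, reduces this to $v_P(f)\geq d+\lfloor\lambda b\rfloor$, exactly the order condition of the left-hand side at $P$.

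For the reverse inclusion ``$\supseteq$'', one must verify that for $f$ in the left-hand side the pullback $\nu^*f$ belongs to $\cJ(Z,D_Z,(\fra\cO_Z)^\lambda)$ globally on $Z$, i.e.\ also at every exceptional prime $E$ of a log resolution $\rho\colon W\to Z$. Writing $\tau:=\nu\rho\colon W\to X_0$, the required condition boils down (using $D_W=\tau^*D_{X_0}-K_{W/X_0}$ and $G_W=\tau^*G_{X_0}$) to the inequality
\[
k_E+{\textstyle\sum_P}\lfloor\lambda b_P\rfloor\alpha_{E,P}\ \geq\ \lfloor\lambda{\textstyle\sum_P}b_P\alpha_{E,P}\rfloor
\]
with $k_E=\mathrm{coeff}_E(K_{W/X_0})$ and $\alpha_{E,P}=\mathrm{coeff}_E(\tau^*P)$. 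The right-hand side is strictly less than $\sum_P\alpha_{E,P}$ (since each $\{\lambda b_P\}<1$), hence at most $\sum_P\alpha_{E,P}-1$, so the inequality follows from the log-discrepancy bound $k_E\geq\sum_P\alpha_{E,P}-1$, which holds because $(X_0,G_{X_0,\mathrm{red}})$ is log canonical (being an SNC pair on a smooth variety). The \emph{main obstacle} is coordinating the log resolution, the ramification analysis, and the strict-transform formula for $D_W$ so that the log-canonicity input applies; once these are in place, the arithmetic computations on the two sides match.
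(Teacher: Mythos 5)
Your argument is correct in substance but takes a genuinely different route from the paper's. The paper's proof simply cites Lazarsfeld's pull-back theorem \cite[Theorem~9.5.42]{positivity} for generically finite morphisms of nonsingular varieties and reduces the singular case to it by resolving $Y$ and the main component of $X\times_Y Y'$, so that the essential comparison takes place along a generically finite map $g\colon X'\to X$ of smooth varieties. You instead normalize (the main component of) $X_0\times_Y Y'$, obtaining a \emph{finite} morphism $\nu\colon Z\to X_0$ with smooth target, transport $\cJ(Y',D_{Y'},(\fra')^{\lambda})$ to $Z$ via Proposition~\ref{invar_birat_mult}, and then prove the resulting identity on $X_0$ by hand; in effect you reprove the special instance of Lazarsfeld's theorem that is actually needed. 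Your version is more self-contained and makes the ramification bookkeeping explicit; the paper's is shorter because it leans on a textbook result.

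Two points should be fixed. First, the labels ``$\subseteq$'' and ``$\supseteq$'' are swapped. The codimension-one computation, being an equivalence, immediately gives $\nu_*\cJ(Z,D_Z,(\fra\cO_Z)^{\lambda})\cap K(X_0)\subseteq\cO_{X_0}(-D_{X_0}-\lfloor\lambda G_{X_0}\rfloor)$, because the right-hand side is invertible and hence determined in codimension one; it is the reverse inclusion that additionally requires controlling the $\rho$-exceptional primes on a log resolution $W\to Z$, which is what your second argument does. Second, and more substantively, the estimate $k_E\ge\sum_P\alpha_{E,P}-1$ is not literally the log-canonicity of $(X_0,G_{X_0,\mathrm{red}})$: the map $\tau=\nu\circ\rho\colon W\to X_0$ is generically finite, not birational, so this is not the defining inequality for that pair. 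What you need is the preservation of (sub-)log canonicity under finite pull-back: setting $K_Z+B_Z=\nu^*(K_{X_0}+G_{X_0,\mathrm{red}})$, the pair $(Z,B_Z)$ is sub-log canonical, and for the birational $\rho$ one has $a(E;Z,B_Z)=k_E-v_E(\tau^*G_{X_0,\mathrm{red}})\ge -1$, which is exactly your inequality. This is standard (e.g.\ Koll\'ar--Mori, Proposition~5.20) but should be invoked explicitly, since as written the justification does not directly apply.
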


\begin{proof}
If both $Y$ and $Y'$ are nonsingular, then the result is \cite[Theorem 9.5.42]{positivity}. Note that
the result therein only requires $\mu$ to be generically finite. The singular case is an easy consequence: if $X\to Y$ is a resolution of singularities, and $X'\to X\times_YY'$ is a resolution of singularities of the irreducible component dominating $Y'$, we get a commutative diagram
\begin{equation}\label{diag2}
\xymatrix{
X' \ar[r]^g \ar[d]_{\pi'} & X \ar[d]^\pi \\
Y' \ar[r]^{\mu} & Y
}
\end{equation}
with $\pi$ and $\pi'$ projective and birational, and $g$ generically finite. Applying 
\cite[Theorem 9.5.42]{positivity} to $g$, and Proposition~\ref{invar_birat_mult} to $\pi$ and $\pi'$, we deduce the 
equality in the proposition.
\end{proof}

\begin{remark}\label{rem_invar_finite_mult}
Note that if the divisor $D$ in Proposition~\ref{invar_finite_mult} is effective, then
the proposition implies that
$$\cJ(Y,D,\fra^{\lambda})=\mu_*\cJ(Y',D_{Y'}, (\fra')^{\lambda})\cap\cO_Y.$$
\end{remark}

The following statement follows directly from the definition of multiplier ideals and the projection formula, see \cite[Proposition~9.2.31]{positivity}.

\begin{proposition}\label{add_divisor}
Let $(Y,D,\fra)$ be as above, and suppose that $D'$ is a Cartier divisor on $Y$. For every $\lambda\geq 0$
we have
$$\cJ(Y,D+D', \fra^{\lambda})=\cJ(Y,D, \fra^{\lambda})\cdot \cO_Y(-D').$$
\end{proposition}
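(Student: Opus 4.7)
The plan is to prove the identity by choosing a common log resolution and tracking how the Cartier divisor $D'$ contributes to the formula defining $\cJ$. Specifically, let $\pi\colon X\to Y$ be a log resolution of $(Y,D+D',\fra)$; note that this simultaneously serves as a log resolution of $(Y,D,\fra)$, since $D'$ is Cartier and its pullback $\pi^*D'$ can be arranged to have support contained in the already existing SNC divisor (enlarging the resolution if necessary so that $\widetilde{D'}\cup \mathrm{Exc}(\pi)\cup\mathrm{Supp}(G)$ is SNC).

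The key calculation is to identify $(D+D')_X$. I would verify that $(D+D')_X = D_X + \pi^*(D')$, using the uniqueness characterization of the divisor $D_X$. Indeed, $K_X + D_X + \pi^*(D')$ is linearly equivalent to $\pi^*(K_Y+D)+\pi^*(D') = \pi^*(K_Y+D+D')$; moreover, for any non-exceptional prime divisor $T$ on $X$, the coefficient of $T$ in $\pi^*(D')$ equals the coefficient of $T$ in the strict transform $\widetilde{D'}$ (since non-exceptional divisors appear in $\pi^*(D')$ only through the strict transform), so the coefficient of $T$ in $D_X+\pi^*(D')$ is the coefficient in $\widetilde{D}+\widetilde{D'}=\widetilde{D+D'}$. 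By the uniqueness property, this gives the claimed identity $(D+D')_X = D_X + \pi^*(D')$.

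Plugging into the definition (\ref{eq_mult}) and using that $\pi^*(D')$ has integer coefficients (so it passes freely through the round-down), I get
$$\cJ(Y,D+D',\fra^{\lambda}) = \pi_*\cO_X\bigl(-D_X - \lfloor \lambda G\rfloor - \pi^*(D')\bigr).$$
Since $D'$ is Cartier, $\cO_X(-\pi^*D') = \pi^*\cO_Y(-D')$, and the projection formula yields
$$\pi_*\cO_X\bigl(-D_X - \lfloor \lambda G\rfloor - \pi^*(D')\bigr) = \pi_*\cO_X\bigl(-D_X - \lfloor \lambda G\rfloor\bigr)\otimes \cO_Y(-D') = \cJ(Y,D,\fra^{\lambda})\cdot \cO_Y(-D'),$$
completing the proof.

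There is no real obstacle here; the only mild subtlety is the bookkeeping verifying $(D+D')_X=D_X+\pi^*(D')$ via the uniqueness property (i.e., checking the coefficient condition on non-exceptional primes), after which the statement reduces to a one-line application of the projection formula.
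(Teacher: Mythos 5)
Your proof is correct, and it follows exactly the route the paper indicates: the paper states that the proposition ``follows directly from the definition of multiplier ideals and the projection formula'' and cites Lazarsfeld, and your argument is precisely that computation. The one nontrivial verification, $(D+D')_X = D_X + \pi^*(D')$ via the uniqueness characterization, is handled correctly (the coefficient of a non-exceptional prime $T$ in $\pi^*D'$ equals its coefficient in $\widetilde{D'}$ because $\pi$ is birational near the generic point of $T$), and the rest is the projection formula with the invertible sheaf $\cO_Y(-D')$.
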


The following result is \cite[Proposition 9.2.28]{positivity}. It is a consequence of Bertini's theorem.

\begin{proposition}\label{general_elements}
Suppose that $Y=\Spec R$ is affine, $\fra=(h_1,\ldots,h_m)$, and $d$ is a positive integer.
If $g_1,\ldots,g_d$ are general linear combinations of the $h_i$ with coefficients in $k$, and if
$g=\prod_{i=1}^dg_i$, then
$$\cJ(Y,D, \fra^{\lambda})=\cJ(Y,D, g^{\lambda/d})$$
for every $\lambda<d$. 
\end{proposition}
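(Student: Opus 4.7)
The plan is to fix a log resolution of the largest triple in sight and use Bertini to check that it remains a log resolution for the principal ideal $(g)$, after which the claimed equality becomes a comparison of round-downs.

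First I would take a log resolution $\pi\colon X\to Y$ of $(Y,D,\fra)$, so that $\fra\cdot\cO_X=\cO_X(-G)$ and $D_X,G$ are supported on a simple normal crossings divisor $E$. Since $\fra$ is globally generated by $h_1,\dots,h_m$, the sections $\pi^*h_1,\dots,\pi^*h_m\in H^0(X,\cO_X(-G))$ generate $\cO_X(-G)$, so the linear subsystem $V\subset|\cO_X(-G)|$ they span is base-point free. By Bertini's theorem in characteristic zero, a general member of $V$ is of the form $G+F$ with $F$ smooth, and $F$ meets $E$ transversely. Applying Bertini iteratively (or working on the $d$-fold product parameter space), one sees that for $(g_1,\dots,g_d)$ in a dense open subset of the $d$-fold product of the $k$-vector space $kh_1+\cdots+kh_m$, the pullbacks $\pi^*g_i$ cut out divisors
\[
\mathrm{div}_X(\pi^*g_i)=G+F_i,\qquad i=1,\dots,d,
\]
where the $F_i$ are smooth and the divisor $E+F_1+\cdots+F_d$ has simple normal crossings. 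In particular, no $F_i$ shares a component with any other $F_j$ or with $E$.

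Given this, $\pi$ is a log resolution of the triple $(Y,D,(g))$ for $g=g_1\cdots g_d$, with
\[
\mathrm{div}_X(\pi^*g)=dG+F_1+\cdots+F_d.
\]
Now I would compute the round-down appearing in $\cJ(Y,D,g^{\lambda/d})$. Since $F_1,\dots,F_d$ and the components of $G$ have no common support,
\[
\bigl\lfloor\tfrac{\lambda}{d}(dG+F_1+\cdots+F_d)\bigr\rfloor
=\lfloor\lambda G\rfloor+\sum_{i=1}^d\bigl\lfloor\tfrac{\lambda}{d}F_i\bigr\rfloor,
\]
and for $\lambda<d$ we have $\lambda/d<1$, so each $\lfloor(\lambda/d)F_i\rfloor=0$. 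Hence
\[
\cJ(Y,D,g^{\lambda/d})=\pi_*\cO_X\bigl(-D_X-\lfloor\lambda G\rfloor\bigr)=\cJ(Y,D,\fra^{\lambda}),
\]
the right-hand identity being the definition of $\cJ(Y,D,\fra^{\lambda})$ with respect to $\pi$.

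The only real point is the Bertini step, which in characteristic zero is standard but does require the genericity condition to be arranged simultaneously for all $d$ factors and compatibly with the fixed SNC divisor $E$. The restriction $\lambda<d$ enters solely to ensure $\lambda/d<1$, so that the new transverse components $F_i$ contribute nothing to the round-down; this is why the conclusion fails once $\lambda\geq d$.
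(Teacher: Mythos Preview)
Your argument is correct and is precisely the standard Bertini argument that the paper has in mind: the paper does not prove this proposition but simply cites \cite[Proposition~9.2.28]{positivity} and remarks that it is a consequence of Bertini's theorem, which is exactly what you have spelled out. One cosmetic point: the base-point-free linear system spanned by the $\pi^*h_i$ lives in $|\cO_X(G)|$ (equivalently $|G|$), not $|\cO_X(-G)|$; the divisor of $\pi^*g_i$ on $X$ is $G+F_i$ with $F_i\in|G|$ general, and the rest of your computation goes through verbatim.
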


We end this subsection with a statement of Skoda's theorem for multiplier ideals on singular varieties. For a proof, see \cite[Corollary~1.4]{LLS}. Note, however, that in this paper we will
only need the case when $X$ is nonsingular.  A proof in this case can be found in \cite[\S~11.1.A]{positivity}.

\begin{proposition}\label{Skoda}
Let $(Y,D, \fra)$ be as above. If $\fra$ can be locally generated by $m$ sections, then
$$\cJ(Y,D, \fra^{\lambda})=\fra\cdot\cJ(Y,D, \fra^{\lambda-1})$$
for every $\lambda\geq m$. 
\end{proposition}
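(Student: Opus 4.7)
The statement is local on $Y$, so I would first reduce to the case in which $Y$ is affine and $\fra$ is generated by $m$ global sections $g_1,\dots,g_m$. The inclusion $\fra\cdot\cJ(Y,D,\fra^{\lambda-1})\subseteq\cJ(Y,D,\fra^\lambda)$ is the easy half and does not require any hypothesis on $\lambda$. To prove it, fix a log resolution $\pi\colon X\to Y$ of $(Y,D,\fra)$ with $\fra\cdot\cO_X=\cO_X(-G)$. By the projection formula,
$$\fra\cdot\pi_*\cO_X(-D_X-\lfloor(\lambda-1)G\rfloor)\subseteq\pi_*\cO_X(-D_X-G-\lfloor(\lambda-1)G\rfloor),$$
and since $G$ has integer coefficients, $-G-\lfloor(\lambda-1)G\rfloor=-\lfloor\lambda G\rfloor$, so the right-hand side is $\cJ(Y,D,\fra^\lambda)$.

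For the reverse inclusion, which is the content of Skoda's theorem, I would reformulate it as the assertion that the natural map
$$\bigoplus_{i=1}^{m}\cJ(Y,D,\fra^{\lambda-1})\longrightarrow\cJ(Y,D,\fra^\lambda),\qquad (h_i)\mapsto\sum_{i}g_i h_i,$$
is surjective when $\lambda\geq m$. On the log resolution $\pi\colon X\to Y$, the sections $\pi^*g_1,\dots,\pi^*g_m$ define a surjection $\cO_X^{\,m}\twoheadrightarrow\cO_X(-G)$, and its kernel $M$ is a locally free sheaf of rank $m-1$. Twisting the exact sequence
$$0\to M\to\cO_X^{\,m}\to\cO_X(-G)\to 0$$
by $\cL_\lambda:=\cO_X(-D_X-\lfloor(\lambda-1)G\rfloor)$ and pushing forward, the desired surjectivity reduces to the vanishing
$$R^1\pi_*\bigl(M\otimes\cL_\lambda\bigr)=0.$$

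The main obstacle, and where the inequality $\lambda\geq m$ enters, is precisely this cohomology vanishing. To handle it I would use the Koszul complex on the sections $\pi^*g_1,\dots,\pi^*g_m$, which resolves $\cO_X(-G)$ in terms of the exterior powers $\wedge^p\cO_X^{\,m}\otimes\cO_X(pG)$, and induces filtrations on $M$ and its exterior powers. Chasing through the associated spectral sequence, the vanishing of $R^1\pi_*(M\otimes\cL_\lambda)$ is reduced to a collection of vanishing statements of the shape
$$R^j\pi_*\cO_X\bigl(K_{X/Y}-\lfloor(\lambda-p)G\rfloor+E\bigr)=0\qquad(j\geq 1,\ 1\leq p\leq m),$$
for suitable effective/exceptional $E$; these are instances of relative Kawamata--Viehweg vanishing, which applies here because the simple normal crossings divisor $E$ on $X$ (coming from the log resolution) makes the round-down of an appropriately big relatively nef divisor land in the correct range. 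The role of the hypothesis $\lambda\geq m$ is exactly to guarantee that every one of the twists $\lambda-p$ encountered (for $1\leq p\leq m$) remains nonnegative, so that each intermediate Kawamata--Viehweg application is valid. Once this vanishing is in place, the spectral-sequence argument delivers the required surjectivity, completing the proof. (In the singular case one would instead invoke the stronger vanishing result for multiplier ideals on normal varieties from \cite{LLS}, but, as noted, only the smooth case is needed here.)
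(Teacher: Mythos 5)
The paper does not give its own proof of this proposition; it simply cites \cite[Corollary~1.4]{LLS} for the singular case and \cite[\S~11.1.A]{positivity} for the smooth case. Your sketch is exactly the standard Koszul-complex argument found in those references and is correct in outline: you reduce to surjectivity of the evaluation map, resolve $\cO_X(-G)$ by the Koszul complex on $\pi^*g_1,\dots,\pi^*g_m$, peel off short exact sequences, and apply the local (relative Kawamata--Viehweg) vanishing theorem---or its singular analogue from \cite{LLS}---to the twists that appear, with $\lambda\geq m$ ensuring every exponent $\lambda-p$ encountered stays nonnegative. One small index slip: the $p$-th Koszul term is $\wedge^p\cO_X^{\,m}\otimes\cO_X((p-1)G)$, not $\cO_X(pG)$; after twisting by $\cL_\lambda$ this yields the correct exponent $\lambda-p$ that you wrote, so your conclusion is unaffected. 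Also, in the normal case $K_{X/Y}$ should be replaced throughout by $-D_X$ (as $Y$ need not be Gorenstein), which you implicitly acknowledge by deferring to \cite{LLS} for the singular vanishing.
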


\subsection{Reduction mod $p$ of multiplier ideals}\label{reduction_mod_p_multiplier}
Suppose now that $Y$, $D$ and $\fra$ are as in \S\ref{mult_ideals}, and let us consider a model $Y_A$ of
$Y$ over a finitely generated $\ZZ$-subalgebra $A$ of $k$. We  follow the notation introduced in 
\S\ref{reduction_mod_p}.
We may assume that we have a Weil divisor
$D_A$ on $Y_A$ and a sheaf of ideals $\fra_A$ on $Y_A$ that give models for $D$ and $\fra$.
Let us fix now a log resolution $\pi\colon X\to Y$ of $(Y,D,\fra)$. We may assume that this 
is induced by a projective birational morphism $X_A\to Y_A$, with $X_A$ smooth over $A$. 

Note that since $K_Y+D$ is Cartier, we may assume that $K_{Y_A}+D_A$ is Cartier,
and all $K_{Y_s}+D_s$ are Cartier, for $s\in\Spec A$. The divisor $D_{X_A}$ on $X_A$
that induces $D_X$ satisfies analogous properties to   properties i) and ii) stated in
\S\ref{mult_ideals} for $D_X$. Furthermore, for every $s\in\Spec A$, the restriction $D_{X_s}$ of $D_{X_A}$ to $X_s$ 
satisfies
\begin{enumerate}
\item[i)] The Cartier divisors $K_{X_s}+D_{X_s}$ and $\pi_s^*(K_{Y_s}+D_s)$ are linearly equivalent.
\item[ii)] For every non-exceptional prime divisor $T$ on $X_s$, its coefficient in $D_{X_s}$ is equal to
its coefficient in the proper transform $\widetilde{D_s}$ of $D_s$. 
\end{enumerate}

We may assume that $\fra_A\cdot\cO_{X_A}=\cO_{X_A}(-G_A)$ for a divisor $G_A$ on $X_A$, and that 
${\rm Exc}(\pi_A)$ is a divisor.
Furthermore, we may assume that we have a divisor $E_A=\sum_{i=1}^N (E_i)_A$ on 
$X_A$ such that every intersection $(E_{i_1})_A\cap\ldots\cap (E_{i_{\ell}})_A$ is smooth over $A$,
and such that $G_A$, ${\rm Exc}(\pi_A)$ and $D_{X_A}$ are supported on ${\rm Supp}(E_A)$. We deduce that
for every $s\in\Spec A$, the induced morphism $\pi_s\colon X_s\to Y_s$ gives a log resolution
of $(Y_s, D_s,\fra_s)$. 

Suppose now that $m$ is a positive integer such that $\fra$ can be generated locally by
$m$ sections. Recall that in this case we have by Proposition~\ref{Skoda}
$\cJ(Y,D, \fra^{\lambda})=\fra\cdot\cJ(Y,D, \fra^{\lambda-1})$ for every $\lambda\geq m$.
This allows us to focus on exponents $<m$ in defining 
$\cJ(Y,D, \fra^{\lambda})_s$, and then extend the definition by putting
$\cJ(Y,D, \fra^{\lambda})_s=\fra_s\cdot \cJ(Y,D, \fra^{\lambda-1})_s$
for $\lambda\geq m$.

For $\lambda<m$, we have the ideal
$(\pi_A)_*\cO_{X_A}(-(D_X)_A-\lfloor\lambda G_A\rfloor)$
on $Y_A$ that gives a model of $\cJ(Y,D, \fra^{\lambda})$. By generic base-change, we may assume that for every $s\in\Spec A$, the induced ideal $\cJ(Y,D, \fra^{\lambda})_s$ is the ideal
$(\pi_s)_*\cO_{X_s}(-D_{X_s}-\lfloor \lambda G_s\rfloor)$.
Indeed, note that we only have to consider finitely many ideals, corresponding to the 
candidate jumping numbers as in (\ref{candidate_jump}) that are $<m$.
We mention that if we consider the above construction starting with a different log resolution of $(Y,D,\fra)$,
then
there is an open subset of $\Spec A$ such that for every $s$ in this subset, the two definitions of the ideals $\cJ(Y,D, \fra^{\lambda})_s$ coincide.

\subsection{Test ideals}\label{section_test_ideals}
In this subsection we work over a perfect field $L$ of positive characteristic $p$ (in the case of interest for us, $L$ will always be a finite field).
In this case, the test ideals of Hara and Yoshida \cite{HY} admit a simpler description, due to Schwede \cite{Schwede}, that completely avoids tight closure theory. Our main reference here is
\cite{ST}, though for some of the proofs the reader will need to consult the references given therein.

Before giving the definition of test ideals, we review a fundamental map in positive characteristic.
Suppose that $Y$ is a smooth connected scheme over $L$, of dimension $n$.
Let $\Omega^{\bullet}_{Y/L}$ be the de Rham complex on $Y$. If $F$ denotes the absolute Frobenius morphism on $Y$, then the \emph{Cartier isomorphism} is a graded
$\cO_Y$-linear isomorphism 
$C_Y\colon \oplus_i {\mathcal H}^i(F_*(\Omega^{\bullet}_{Y/L}))
\to\oplus_i\Omega^i_{Y/L}$
(see \cite{DI} for description and proof). In particular, we get a surjection
$$F_*\omega_Y=F_*\Omega_{Y/L}^n\to {\mathcal H}^n(F_*(\Omega^{\bullet}_{Y/L}))
\overset{C_Y}\to\omega_Y,$$
that we denote by $t_Y$. Iterating this map $e$ times gives $t_Y^e\colon F^e_*\omega_Y\to
\omega_Y$.

If $f$ and $w$ are local sections of $\cO_Y$ and $\omega_Y$,
respectively, then $t_Y\left(\frac{1}{f}w\right)=\frac{1}{f}t_Y(f^{p-1}w)$. This shows that for every
effective divisor $D$ on $Y$, we have an induced map
$$t_{Y,D}\colon F_*(\omega_Y(D))\to\omega_Y(D),$$ compatible with the previous one
via the inclusion $\omega_Y\hookrightarrow\omega_Y(D)$. The same remark applies to the
maps $t_Y^e$. If $D$ is not necessarily effective, then $t_{Y,D}$ is still well-defined, but its image lies in the sheaf $\omega_Y\otimes K(Y)$ of rational $n$-forms on $Y$.

The map $t_Y\colon F_*(\omega_Y)\to\omega_Y$ can be described around a closed point 
$y\in Y$, as follows.
Let us choose a system of coordinates $u_1,\ldots,u_n$ at $y$
(that is, a regular system of parameters of $\cO_{Y,y}$). We may assume that we have
an affine open neighborhood $U$ of $y$ such that $u_i\in\cO_Y(U)$ for all $i$, and that
$du=du_1\wedge\ldots\wedge du_n$ gives a basis of $\omega_Y\vert_U$. 
Furthermore, the residue field of $\cO_{Y,y}$ is finite over the perfect field $L$, hence it is perfect,
and since the $u_i$ give a regular system of parameters at $y$, 
we may assume that $\cO_Y(U)$ is free over $\cO_Y(U)^p$, with a basis given by
$$\{u_1^{i_1}\cdots u_n^{i_n}\mid 0\leq i_j\leq p-1\,\text{for}\,1\leq j\leq n\}.$$
In this case $t_Y\vert_U$ is characterized by the fact that $t_Y(h^p w)=h\cdot t_Y(w)$ for every
$h\in\cO_Y(U)$, and for every $i_j$ with $0\leq i_j\leq p-1$ we have
\begin{equation}\label{formula_t}
t_Y(u_1^{i_1}\cdots u_n^{i_n}du)=\left\{
\begin{array}{cl}
du, & \text{if}\,\,i_j=p-1\,\text{for all}\, \,j; \\[1mm]
0, & \text{otherwise}.
\end{array}\right.
\end{equation}

The map $t_Y$ is functorial in the following sense. Consider a morphism
$\pi\colon X\to Y$ of smooth schemes over $L$. For every $i$ we have a commutive diagram 
involving the respective Cartier isomorphisms

\[
\begin{CD}
\pi^*{\mathcal H}^i(F_*\Omega_{Y/L}^{\bullet})@>{\pi^*(C_Y)}>>
 \pi^*\Omega_{Y/L}^i\\
@V{\beta_i}VV @VV{\alpha_i}V \\
{\mathcal H}^i(F_*\Omega_{X/L}^{\bullet})@>{C_X}>> \Omega_{X/L}^i 
\end{CD}
\]
where $\alpha_i$ is given by pulling-back forms, while $\beta_i$ is obtained as the composition
$$\pi^*{\mathcal H}^i(F_*\Omega_{Y/L}^{\bullet})\to 
{\mathcal H}^i(\pi^*(F_*\Omega_{Y/L}^{\bullet}))   
\to {\mathcal H}^i(F_*\pi^*\Omega_{Y/L}^{\bullet})
\to {\mathcal H}^i(F_*\Omega_{X/L}^{\bullet}).$$
If, in addition, $\pi$ is a proper birational map, $D$ is an effective divisor on $Y$, and 
$D_X$ is the divisor on $X$ defined as in \S\ref{mult_ideals}, we get 
an induced commutative diagram relating the two trace maps
\begin{equation}\label{diag3_1}
\begin{CD}
\pi^*(F^e_*(\omega_Y(D)))@>{\pi^*(t_{Y,D}^e)}>>\pi^*(\omega_Y(D))\\
@VVV @VVV\\
F^e_*(\omega_X(D_X))@>{t_{X,D_X}^e}>>\omega_X(D_X)\otimes K(X)
\end{CD}
\end{equation}
where the right vertical map is obtained by composing the isomorphism
$\psi\colon\pi^*(\omega_Y(D))\to\omega_X(D_X)$ with the inclusion
$\omega_X(D_X)\hookrightarrow\omega_X(D_X)\otimes K(X)$, 
and the left vertical map is given by the composition 
\[
\begin{CD}
\pi^*(F^e_*(\omega_Y(D)))@>>> F^e_*(\pi^*(\omega_Y(D)))@>{F^e_*(\psi)}>> 
F^e_*(\omega_X(D_X)).
\end{CD}
\]
Note that while the botom horizontal map in (\ref{diag3_1}) does not necessarily
land in $\omega_X(D_X)$ (since in general $D_X$ is not effective), the composition of the maps
in (\ref{diag3_1}) has this property.

Suppose now that $Y$ is a normal, irreducible scheme over $L$, of dimension $n$. 
We fix an
effective  
Weil divisor $D$ on $Y$ such that $K_Y+D$ is Cartier
(note that in \cite{ST} one works in a more general framework, which complicates some of the definitions; for the sake of simplicity, we only give the definitions in the setting that we will need).
We claim that to $D$ and to every $e\geq 1$ we can naturally associate an $\cO_Y$-linear map
\begin{equation}\label{map_to_divisor}
\phi_{D}^{(e)}\colon F^e_*\cO_Y((1-p^e)(K_Y+D))\to\cO_Y.
\end{equation}
Indeed, in order to define $\phi_D^{(e)}$ it is enough to do it on the complement of a closed 
subset of codimension $\geq 2$. Therefore we may assume that $Y$ is smooth over $L$.
In this case $\phi_D^{(e)}$ is obtained by tensoring
$t_{Y,D}^e\colon F^e_*(\omega_Y(D))\to \omega_Y(D)$ by $\omega_Y^{-1}(-D)$, and using the 
projection formula for $F^e$. Note that if $D$ is not necessarily effective, then we may still
define as above $\phi_D^{(e)}$, but its image will be a fractional ideal on $Y$, not necessarily
contained in $\cO_Y$. 

If $\pi\colon X\to Y$ is a proper, birational morphism of schemes, with $X$ smooth, then we 
have as in \S\ref{mult_ideals} a unique divisor $D_X$ on $X$ such that $K_X+D_X$ is linearly equivalent to
$\pi^*(K_Y+D)$, and such that $D_X$ agrees along the non-exceptional divisors of $\pi$
with the proper transform of $D$. In this case, we claim that 
the commutative diagram (\ref{diag3_1})
induces a commutative diagram
\begin{equation}\label{diag3_2}
\begin{CD}
\pi^*F^e_*\cO_Y((1-p^e)(K_Y+D))@>{\pi^*(\phi_D^{(e)}})>>\pi^*\cO_Y\\
@VVV @VVV\\
F^e_*\cO_X((1-p^e)(K_X+D_X))@>{\phi_{D_X}^{(e)}}>>K(X),
\end{CD}
\end{equation}
where the right vertical map is given by $\pi^*(\cO_Y)\simeq\cO_X\hookrightarrow K(X)$.
This follows when $Y$ is smooth, too, using the commutativity of
(\ref{diag3_1}). In the general case, note that
(\ref{diag3_2}) corresponds by the adjointness of $(\pi^*,\pi_*)$ to the diagram
\begin{equation}\label{diag3_3}
\begin{CD}
F^e_*\cO_Y((1-p^e)(K_Y+D))@>\phi_D^{(e)}>>\cO_Y\\
@VF^e_*({\rho})VV @VVV\\
\pi_*F^e_*\cO_X((1-p^e)(K_X+D_X))@>{\pi_*(\phi_{D_X}^{(e)}})>>K(Y),
\end{CD}
\end{equation}
where $\rho\colon\cO_Y((1-p^e)(K_Y+D))\to\pi_*\cO_X((1-p^e)(K_X+D_X))$
is the canonical isomorphism given by pull-back of sections.
In order to check the commutativity of (\ref{diag3_3}) we may restrict to the complement of a codimension $\geq 2$ closed subset, and therefore we may assume that both $X$ and $Y$ are smooth, in which case, as we have mentioned, (\ref{diag3_2}) hence also (\ref{diag3_3}) is 
commutative. Note also that since (\ref{diag3_3}) is commutative and the left vertical map is an isomorphism, the image of $\pi_*(\phi_{D_X}^{(e)})$ is contained in $\cO_Y=\pi_*(\cO_X)$.

\begin{example}\label{computation_SNC}
Suppose that $Y$ is nonsingular, and $D=a_1E_1+\ldots+a_rE_r$ is a not-necessarily-effective
 simple normal crossings divisor on $Y$. If $\frb=\cO_Y(-D')$, where $D'=\sum_{i=1}^rb_iE_i$
 is effective, then
 $\phi_D^{(e)}(F^e_*(\frb\cdot\cO_Y((1-p^e)(K_Y+D))))$ is the fractional ideal $\cO_Y(-D-F)$,
 where $F=\sum_{i=1}^rc_iE_i$, with $c_i=\lfloor (b_i-a_i)/p^e\rfloor$ for every $i$.
 This description follows easily from the description in coordinates of the map $t_{Y,D}^e$. 
\end{example}

We can now recall the definition of the test ideal $\tau(Y,D, \fra^{\lambda})$, 
where $(Y,D)$ is as above (with $D$ effective), $\fra$ is a 
nonzero ideal on $Y$, and $\lambda$ is a non-negative real number. One shows that there is a unique
minimal nonzero coherent ideal sheaf $J$ on $Y$ such that for every $e$ we have
\begin{equation}\label{eq_def_test}
\phi_D^{(e)}\left(F^e_*(\fra^{\lceil \lambda(p^e-1)\rceil} J\cdot \cO_Y((1-p^e)(K_Y+D)))\right)
\subseteq J.
\end{equation}
This is the test ideal $\tau(Y,D, \fra^{\lambda})$.
Here $\lceil u\rceil$ denotes the smallest integer $\geq u$. When $\fra=(f)$ is a principal ideal,
we simply write $\tau(Y,D,f^{\lambda})$.

\begin{proposition}\label{prop_power}
If $(Y,D)$ is as above, and $d$ is a positive integer, then
$$\tau(Y,D, (\fra^d)^{\lambda})\subseteq\tau(Y,D, \fra^{d\lambda})\,\,\,\text{for every}\,\,\,\lambda\in\RR_{\geq 0}.$$
\end{proposition}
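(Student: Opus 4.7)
The plan is to verify the inclusion directly from the defining property of the test ideal, exploiting minimality. Setting $J := \tau(Y,D,\fra^{d\lambda})$, the goal reduces to showing that $J$ satisfies the defining containment (\ref{eq_def_test}) for the pair $(D,(\fra^d)^{\lambda})$, namely that for every $e \geq 1$,
$$\phi_D^{(e)}\Bigl(F^e_*\bigl((\fra^d)^{\lceil \lambda(p^e-1)\rceil}\,J\cdot \cO_Y((1-p^e)(K_Y+D))\bigr)\Bigr)\subseteq J.$$
Once this is established, the uniqueness and minimality of $\tau(Y,D,(\fra^d)^{\lambda})$ immediately give $\tau(Y,D,(\fra^d)^{\lambda})\subseteq J=\tau(Y,D,\fra^{d\lambda})$.

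The key observation is the elementary inequality $d\,\lceil \lambda(p^e-1)\rceil \geq \lceil d\lambda(p^e-1)\rceil$, which holds because the left-hand side is an integer that is at least $d\lambda(p^e-1)$. Since $(\fra^d)^{\lceil\lambda(p^e-1)\rceil}=\fra^{d\lceil\lambda(p^e-1)\rceil}$, this yields the ideal inclusion
$$\fra^{d\lceil\lambda(p^e-1)\rceil}\,J\cdot \cO_Y((1-p^e)(K_Y+D))\subseteq \fra^{\lceil d\lambda(p^e-1)\rceil}\,J\cdot \cO_Y((1-p^e)(K_Y+D)).$$
Applying $\phi_D^{(e)}\circ F^e_*$ to both sides and using that $J$, being $\tau(Y,D,\fra^{d\lambda})$, satisfies the defining condition (\ref{eq_def_test}) with exponent $d\lambda$, the image of the right-hand side lies in $J$, and hence so does the image of the left-hand side. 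This is exactly the desired containment.

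There is no real obstacle here; the statement is a purely formal consequence of the minimality characterization of test ideals together with the ceiling inequality above. The only thing to be slightly careful about is keeping the roles of the exponent $\lambda$ and the ideal $\fra^d$ straight in the defining formula, so that $(\fra^d)^{\lceil \lambda(p^e-1)\rceil}$ is correctly rewritten as $\fra^{d\lceil\lambda(p^e-1)\rceil}$ before comparing with $\fra^{\lceil d\lambda(p^e-1)\rceil}$.
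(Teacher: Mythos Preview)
Your proof is correct and follows essentially the same approach as the paper: both arguments use the ceiling inequality $d\lceil \lambda(p^e-1)\rceil \geq \lceil d\lambda(p^e-1)\rceil$ to show that $\tau(Y,D,\fra^{d\lambda})$ satisfies the defining containment (\ref{eq_def_test}) for $(\fra^d)^{\lambda}$, and then invoke minimality. Your write-up is slightly more explicit about rewriting $(\fra^d)^{\lceil\lambda(p^e-1)\rceil}$ as $\fra^{d\lceil\lambda(p^e-1)\rceil}$, but the substance is identical.
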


\begin{proof}
For every $e$ we have $d\lceil \lambda(p^e-1)\rceil \geq \lceil d\lambda (p^e-1)\rceil$. It follows that
if $J$ satisfies (\ref{eq_def_test}) with $\fra^{\lceil \lambda(p^e-1)\rceil}$ replaced by
$\fra^{\lceil d\lambda(p^e-1)\rceil}$, then it also satisfies (\ref{eq_def_test}) with 
$\fra^{\lceil \lambda(p^e-1)\rceil}$ replaced by
$\fra^{d \lceil \lambda(p^e-1)\rceil}$. The assertion in the proposition now follows from
the minimality in the definition of $\tau(Y,D, (\fra^d)^{\lambda})$. 
\end{proof}

\begin{remark}
In fact, the inclusion in proposition~\ref{prop_power} is an equality. We leave to the interested reader the task of checking the reverse inclusion, that we will not need. 
See \cite[Corollary~2.15]{BMS} for a proof in the case when $Y$ is nonsingular.
\end{remark}

In order to describe $\tau(Y,D, \fra^{\lambda})$, it is enough to do it when $Y=\Spec R$ is affine.
In this case one can show (see \cite[Lemma~6.4]{ST}) that there is a nonzero $c\in R$ such that
for every nonzero $g\in R$, there is $e\geq 1$ such that
\begin{equation}\label{eq0_def_test}
c\in \phi_D^{(e)}\left(F^e_*(\fra^{\lceil \lambda(p^e-1)\rceil} g\cdot \cO_Y((1-p^e)(K_Y+D)))\right).
\end{equation}
In this case, it is not hard to see that 
\begin{equation}\label{eq2_def_test}
\tau(Y,D, \fra^{\lambda})=\sum_{e\geq 1}\phi_D^{(e)}
\left(F^e_*(\fra^{\lceil \lambda(p^e-1)\rceil} c\cdot \cO_Y((1-p^e)(K_Y+D)))\right)
\end{equation}
(see \cite[Proposition~6.8]{ST}). 
For example, if $u\in \fra\smallsetminus\{0\}$ is such that $U=\Spec R_u$ is regular, and
$D\vert_U=0$, then one can take $c$ to be a power of $u$ (see
\cite[Remark~6.6]{ST}).

Note that if $\fra\subseteq\frb$ and $c\in R$ satisfies 
(\ref{eq0_def_test}) for $\fra$, then it also satisfies it for $\frb$. An immediate consequence of
the description (\ref{eq2_def_test}) for the test ideal is the following monotonicity property.

\begin{proposition}\label{monotonicity}
If $(Y,D)$ is as above, and $\fra$, $\frb$ are nonzero ideals on $Y$ with $\fra\subseteq\frb$, then
$\tau(Y,D, \fra^{\lambda})\subseteq\tau(Y,D, \frb^{\lambda})$ for every $\lambda\in\RR_{\geq 0}$.
\end{proposition}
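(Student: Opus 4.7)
The plan is to work affinely and read off the inclusion directly from the sum formula \eqref{eq2_def_test}. Since test ideals are defined locally, I may assume $Y=\Spec R$ is affine, so that both $\tau(Y,D,\fra^\lambda)$ and $\tau(Y,D,\frb^\lambda)$ admit presentations of the form \eqref{eq2_def_test}. The first step is to choose a single element $c\in R$ that simultaneously serves as the ``test element'' for both ideals. I will pick $c$ so that it satisfies \eqref{eq0_def_test} for $\fra$; the remark made just before the statement of the proposition shows that the containment $\fra\subseteq\frb$ (so that in particular $\fra^{\lceil\lambda(p^e-1)\rceil}\subseteq\frb^{\lceil\lambda(p^e-1)\rceil}$ for every $e$) automatically forces $c$ to satisfy \eqref{eq0_def_test} for $\frb$ as well.

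Having secured a common $c$, I will apply the formula \eqref{eq2_def_test} to each ideal. For every $e\geq 1$, the monotonicity of the functor $F^e_*(-\cdot\cO_Y((1-p^e)(K_Y+D)))$ together with the $\cO_Y$-linearity of $\phi_D^{(e)}$ gives
\[
\phi_D^{(e)}\!\left(F^e_*(\fra^{\lceil\lambda(p^e-1)\rceil}c\cdot\cO_Y((1-p^e)(K_Y+D)))\right)\subseteq \phi_D^{(e)}\!\left(F^e_*(\frb^{\lceil\lambda(p^e-1)\rceil}c\cdot\cO_Y((1-p^e)(K_Y+D)))\right).
\]
Summing over $e\geq 1$ and using \eqref{eq2_def_test} on both sides yields the desired inclusion $\tau(Y,D,\fra^\lambda)\subseteq\tau(Y,D,\frb^\lambda)$.

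There is no real obstacle here beyond justifying the existence of the common test element $c$; once that is in place, the argument is essentially a term-by-term comparison of the two sums. The only thing to keep in mind is to appeal to \cite[Lemma~6.4]{ST} for the existence of some $c$ satisfying \eqref{eq0_def_test} for $\fra$, and then to invoke the observation made in the excerpt that this same $c$ works for any larger ideal $\frb$.
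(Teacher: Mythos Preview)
Your argument is correct and is essentially identical to the paper's: the paper also observes that a $c$ satisfying \eqref{eq0_def_test} for $\fra$ automatically satisfies it for the larger ideal $\frb$, and then reads off the inclusion term-by-term from the sum formula \eqref{eq2_def_test}.
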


The above gives a definition of $\tau(Y,D, \fra^{\lambda})$ in the case when 
$D$ is an effective divisor. On the other hand, one shows (see \cite[Lemma~6.11]{ST})
that if $D'$ is any effective Cartier divisor, then 
\begin{equation}\label{eq3_def_test}
\tau(Y,D+D', \fra^{\lambda})=\tau(Y,D, \fra^{\lambda})\cdot\cO_Y(-D'). 
\end{equation}
If $D$ is a not-necessarily-effective Weil divisor such that $K_Y+D$ is Cartier, then
we define $\tau(Y,D, \fra^{\lambda})$ as follows. Working locally, we can find a Cartier
divisor $D'$ such that $D+D'$ is effective, and in this case $\tau(Y,D, \fra^{\lambda})$ is the fractional ideal
$\tau(Y,D+D', \fra^{\lambda})\cdot\cO_Y(D')$. It follows from (\ref{eq3_def_test}) that the definition
is independent of the choice of $D'$. 

If $Y$ is nonsingular, one can show that the above definition for the test ideal
$\tau(Y,D,\fra^{\lambda})$ coincides with the one in \cite{BMS}, which is the one we gave in the
Introduction. We refer to \cite[Proposition~3.10]{BSTZ} for a proof.

While we will not need the results on the jumping numbers for the test ideals, we mention them because of the analogy with the case of multiplier ideals. For the proofs, see \cite{BMS} for the case when $Y$ is smooth and $D=0$, and \cite{BSTZ} for the general case. Given any $(Y,D, \fra)$ as above,
and any $\lambda\geq 0$, there is $\epsilon>0$ such that
$\tau(Y,D, \fra^{\lambda})=\tau(Y,D, \fra^{\mu})$ for every $\mu$ with $\lambda\leq\mu\leq\lambda+
\epsilon$. A positive $\lambda$ is an \emph{F-jumping number} if $\tau(Y,D, \fra^{\lambda})
\neq\tau(Y,D, \fra^{\mu})$ for every $\mu<\lambda$. One can show that the set of $F$-jumping numbers is a discrete set of rational numbers. However, we emphasize that this result is much more subtle than the corresponding one for multiplier ideals.

The following proposition gives the analogue of Skoda's theorem for test ideals 
(see \cite[Lemma~3.26]{BSTZ}). For the smooth case, which is the only one that we will
need in this paper, see \cite[Proposition~2.25]{BMS}.

\begin{proposition}\label{Skoda_test}
Let $(Y,D, \fra)$ be a triple as above, and $m$ a positive integer such that
$\fra$ is locally generated by $m$ sections. For every $\lambda\geq m$ we have
$$\tau(Y,D, \fra^{\lambda})=\fra\cdot \tau(Y,D, \fra^{\lambda-1}).$$
\end{proposition}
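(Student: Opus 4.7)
The plan is to reduce to the smooth case with $D = 0$, which is the only case actually needed in this paper, and then to apply the description of \cite[Proposition~2.25]{BMS}.

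For the reduction: on a smooth $Y$ every effective Weil divisor is Cartier, so by (\ref{eq3_def_test}) applied with $D' = D$ one gets $\tau(Y, D, \fra^\mu) = \tau(Y, 0, \fra^\mu)\cdot\cO_Y(-D)$ for every $\mu\geq 0$. Multiplying both sides of the desired identity by $\cO_Y(D)$ reduces the effective case to $D = 0$; the non-effective case follows locally by choosing an effective Cartier $D'$ with $D+D'$ effective.

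In the case $D = 0$, $Y$ smooth, I use the equivalent description $\tau(Y,\fra^\lambda) = (\fra^{\lceil\lambda p^e\rceil})^{[1/p^e]}$ for $e\gg 0$. The central identity is
\[
(\fra^{[p^e]}\cdot I)^{[1/p^e]} \;=\; \fra\cdot I^{[1/p^e]},
\]
valid for any ideal $I$ when $\fra = (h_1,\ldots,h_m)$; it follows from the adjointness between $(-)^{[p^e]}$ and $(-)^{[1/p^e]}$ together with the formula $L^{[p^e]}:h^{p^e} = (L:h)^{[p^e]}$ (a consequence of the flatness of Frobenius on a regular ring). For $\tau(Y,\fra^\lambda)\subseteq\fra\cdot\tau(Y,\fra^{\lambda-1})$, the hypothesis $\lambda\geq m$ yields $\lceil\lambda p^e\rceil\geq m p^e$, so pigeonhole forces every monomial $h^\alpha$ of degree $\geq m p^e$ to have some $\alpha_i\geq p^e$; hence $\fra^{\lceil\lambda p^e\rceil}\subseteq\fra^{[p^e]}\cdot\fra^{\lceil\lambda p^e\rceil-p^e}$, and applying $(-)^{[1/p^e]}$ together with $\lceil\lambda p^e\rceil - p^e = \lceil(\lambda-1)p^e\rceil$ gives the inclusion. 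For the reverse inclusion, $\fra^{[p^e]}\subseteq\fra^{p^e}$ yields
\[
\fra\cdot\tau(Y,\fra^{\lambda-1}) \;=\; (\fra^{[p^e]}\cdot\fra^{\lceil(\lambda-1)p^e\rceil})^{[1/p^e]} \;\subseteq\; (\fra^{\lceil\lambda p^e\rceil})^{[1/p^e]} \;=\; \tau(Y,\fra^\lambda).
\]

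The one subtlety that does not concern us here, but is the main obstacle in the general (singular) statement \cite[Lemma~3.26]{BSTZ}, is that the Schwede exponent $\lceil\lambda(p^e-1)\rceil$ equals $m(p^e-1)$ exactly when $\lambda=m$, so pigeonhole applied directly to the sum formula (\ref{eq2_def_test}) only forces $\alpha_i\geq p^e-1$ rather than $\geq p^e$. Handling this requires a finer argument, such as the stabilization of the defining sum as $e\to\infty$.
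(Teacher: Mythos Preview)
The paper does not give a proof of this proposition; it only records the references \cite[Lemma~3.26]{BSTZ} for the general statement and \cite[Proposition~2.25]{BMS} for the smooth case, noting explicitly that the latter is all that is used. Your proposal supplies exactly the BMS argument for the smooth case together with a correct reduction to $D=0$ via (\ref{eq3_def_test}), so it is fully consistent with what the paper intends. The pigeonhole step $\fra^{N}=\fra^{[p^e]}\cdot\fra^{N-p^e}$ for $N\geq mp^e$ and the identity $(\fra^{[p^e]}\cdot I)^{[1/p^e]}=\fra\cdot I^{[1/p^e]}$ are the right ingredients, and taking $e\gg 0$ so that both stabilizations have occurred finishes the job; your closing remark about the Schwede exponent $\lceil\lambda(p^e-1)\rceil$ correctly flags why the singular case in \cite{BSTZ} needs more work, but this does not affect the argument here.
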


We will make use in \S\ref{connection_conjectures} 
of the following result of Schwede and Tucker
\cite[Corollary~6.28]{ST}
concerning the behavior of test ideals under finite morphisms. Let 
$\mu\colon Y'\to Y$ be a finite surjective morphism of normal, irreducible varieties. 
Given the Weil divisor $D$ on $Y$ such that $K_Y+D$ is Cartier, then as in 
Proposition~\ref{invar_finite_mult} we have a divisor $D_{Y'}$ on $Y'$ such that
$K_{Y'}+D_{Y'}$ and $\mu^*(K_Y+D)$ are linearly equivalent.
We also put $\fra'=\fra\cdot\cO_{Y'}$.

\begin{theorem}\label{finite_test}
With the above notation, if $\mu$ is a separable morphism  and if the trace map 
${\rm Tr}\colon K(Y')\to K(Y)$ is surjective, then 
$$\tau(Y,D, \fra^{\lambda})=\mu_*\tau(Y',D_{Y'}, (\fra')^{\lambda})\cap K(Y).$$
Furthermore, if $D$ is effective, then 
$$\tau(Y,D, \fra^{\lambda})=\mu_*\tau(Y',D_{Y'}, (\fra')^{\lambda})\cap \cO_Y.$$
\end{theorem}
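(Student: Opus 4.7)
The plan follows the Schwede--Tucker proof in \cite{ST}. The crucial ingredient is the compatibility between the Frobenius traces $\phi_D^{(e)},\phi_{D_{Y'}}^{(e)}$ and the Grothendieck trace $\mathrm{Tr}_\mu\colon\mu_*\omega_{Y'}\to\omega_Y$ of the finite separable cover $\mu$. Since $K_{Y'}+D_{Y'}$ is linearly equivalent to $\mu^*(K_Y+D)$, the first step is to establish the commutative square
\[
\begin{CD}
F^e_*\mu_*\cO_{Y'}((1-p^e)(K_{Y'}+D_{Y'})) @>{\phi_{D_{Y'}}^{(e)}}>> \mu_*K(Y') \\
@V{F^e_*\mathrm{Tr}_\mu}VV @VV{\mathrm{Tr}_\mu}V \\
F^e_*\cO_Y((1-p^e)(K_Y+D)) @>{\phi_D^{(e)}}>> K(Y).
\end{CD}
\]
This expresses the compatibility of Grothendieck duality with the square $F^e\circ\mu=\mu\circ F^e$; by normality of $Y,Y'$ and reflexivity of the rank-one sheaves involved, it can be checked on the common smooth locus, where (\ref{formula_t}) reduces it to a direct calculation in local coordinates.

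For the inclusion $\tau(Y,D,\fra^\lambda)\subseteq\mu_*\tau(Y',D_{Y'},(\fra')^\lambda)\cap K(Y)$, set $J:=\mu_*\tau(Y',D_{Y'},(\fra')^\lambda)\cap K(Y)$ and verify the stability condition (\ref{eq_def_test}) for $J$; minimality of $\tau(Y,D,\fra^\lambda)$ will then give the inclusion. For a local section $a\in J\subseteq\tau(Y',D_{Y'},(\fra')^\lambda)$, the inclusion $\fra\subseteq\fra'$ combined with stability of the latter ideal yields
\[
\phi_{D_{Y'}}^{(e)}\bigl(F^e_*((\fra')^{\lceil\lambda(p^e-1)\rceil}a\cdot\cO_{Y'}((1-p^e)(K_{Y'}+D_{Y'})))\bigr)\subseteq\tau(Y',D_{Y'},(\fra')^\lambda).
\]
Pushing this through $\mathrm{Tr}_\mu$ via the diagram identifies $\phi_D^{(e)}(F^e_*(\fra^{\lceil\lambda(p^e-1)\rceil}a\cdot\cO_Y((1-p^e)(K_Y+D))))$ with a section of $\mu_*\tau(Y',D_{Y'},(\fra')^\lambda)\cap K(Y)=J$, as required. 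Nonvanishing of $J$ follows from surjectivity of $\mathrm{Tr}_\mu$ on function fields.

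For the reverse inclusion I would work affinely with $Y=\Spec R$, $Y'=\Spec R'$ and use the explicit description (\ref{eq2_def_test}). Start with a test element $c\in R$ for $(Y,D,\fra^\lambda)$ and, using surjectivity of $\mathrm{Tr}_\mu$ on function fields, produce $\tilde{c}\in R'$ so that $c':=\mu^\sharp(c)\tilde{c}$ is a test element for $(Y',D_{Y'},(\fra')^\lambda)$. Every generator of $\tau(Y',D_{Y'},(\fra')^\lambda)$ then has the form $\phi_{D_{Y'}}^{(e)}(F^e_*((\fra')^{\lceil\lambda(p^e-1)\rceil}c'f\cdot\cO_{Y'}(\cdots)))$ with $f\in R'$; applying $\mathrm{Tr}_\mu$ and using the diagram together with the $\cO_Y$-linearity of $\mathrm{Tr}_\mu$ delivers an element of $\phi_D^{(e)}(F^e_*(\fra^{\lceil\lambda(p^e-1)\rceil}c\cdot\cO_Y(\cdots)))\subseteq\tau(Y,D,\fra^\lambda)$, so $\mathrm{Tr}_\mu(\tau(Y',D_{Y'},(\fra')^\lambda))\subseteq\tau(Y,D,\fra^\lambda)$. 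The main obstacle is upgrading this trace-containment to the set-theoretic inclusion claimed: for $a\in K(Y)\cap\mu_*\tau(Y',D_{Y'},(\fra')^\lambda)$, one must recover $a$ itself in $\tau(Y,D,\fra^\lambda)$, which uses the full strength of the trace-surjectivity hypothesis (so that $\mathrm{Tr}_\mu(\mu_*\cO_{Y'})$ is not contained in any proper divisorial ideal of $\cO_Y$) combined with the reflexive nature of $\tau(Y,D,\fra^\lambda)$ in codimension one, forcing equality from equality on the generic point of each codimension-one subvariety.
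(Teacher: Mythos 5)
First, a note on context: the paper does not prove this statement --- it is imported verbatim as a black box from Schwede--Tucker (\cite[Corollary 6.28]{ST}), so there is no in-paper proof to compare against. Your proposal is therefore a reconstruction of the Schwede--Tucker argument, and the right question is whether it closes.

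Your starting point --- the commutativity of the square relating $\phi_D^{(e)}$, $\phi_{D_{Y'}}^{(e)}$ and the Grothendieck trace $\mathrm{Tr}_\mu$, verified on the common smooth locus via (\ref{formula_t}) and extended by normality/reflexivity --- is correct and is indeed the load-bearing tool. But both directions, as written, have genuine gaps.

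For the inclusion $\tau(Y,D,\fra^\lambda)\subseteq J$, you want to check that $J$ satisfies the stability condition (\ref{eq_def_test}) and then appeal to minimality; the strategy is right. However the verification is not completed by the commutative square you drew. That square identifies $\mathrm{Tr}_\mu\circ\phi_{D_{Y'}}^{(e)}$ with $\phi_D^{(e)}\circ F^e_*\mathrm{Tr}_\mu$; it does \emph{not} say that $\phi_D^{(e)}$ applied to sections of $\cO_Y((1-p^e)(K_Y+D))$ coincides (inside $K(Y')$) with $\phi_{D_{Y'}}^{(e)}$ applied to their pullbacks. When you "push through $\mathrm{Tr}_\mu$", what you obtain is that $\phi_D^{(e)}(F^e_*\mathrm{Tr}_\mu(\cdot))$ lies in $\mathrm{Tr}_\mu(\tau')$, not that $\phi_D^{(e)}$ of your chosen section lies in $J=\mu_*\tau'\cap K(Y)$. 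You would need a separate pullback-compatibility of the Frobenius traces (analogous to (\ref{diag3_3}) but for a finite, ramified $\mu$ rather than a birational $\pi$), and that compatibility is precisely what has to be argued, not invoked.

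For the inclusion $J\subseteq\tau(Y,D,\fra^\lambda)$, you correctly derive $\mathrm{Tr}_\mu(\mu_*\tau')\subseteq\tau$ from the square and (\ref{eq2_def_test}), and you correctly flag the remaining "main obstacle". But your proposed fix --- invoking "the reflexive nature of $\tau(Y,D,\fra^\lambda)$ in codimension one" to force equality --- does not work: test ideals are not reflexive, and their difference from a fixed ideal can be concentrated in codimension $\geq 2$, so codimension-one information cannot pin them down. What is actually used is the \emph{splitting} furnished by the surjectivity hypothesis: since $\mathrm{Tr}_\mu\colon\mu_*\cO_{Y'}\to\cO_Y$ is surjective and $\cO_Y$-linear, one can locally find $u\in\mu_*\cO_{Y'}$ with $\mathrm{Tr}_\mu(u)=1$. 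For $a\in\mu_*\tau'\cap\cO_Y$ one then has $ua\in\tau'$ (since $\tau'$ is an $\cO_{Y'}$-ideal), and $\mathrm{Tr}_\mu(ua)=a\cdot\mathrm{Tr}_\mu(u)=a$, so $a\in\mathrm{Tr}_\mu(\mu_*\tau')\subseteq\tau$. The non-effective case follows by twisting by a Cartier divisor as in (\ref{eq3_def_test}). This splitting device, not a divisorial/reflexivity argument, is the content of the trace-surjectivity hypothesis.

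Finally, the statement asserts $\mathrm{Tr}_\mu(\mu_*\tau')\subseteq\tau$ implicitly, but the reverse containment $\tau\subseteq\mathrm{Tr}_\mu(\mu_*\tau')$ is also needed to make the above clean (it is what underlies the first displayed equality as an equality, not merely a double inclusion of different-looking objects); your write-up does not establish it, and it requires a lifting argument through the trace square together with a comparison of test elements on $Y$ and $Y'$ that is more delicate than "choose $\tilde c$ so that $c'=\mu^\sharp(c)\tilde c$ is a test element."
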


One can compare this result with the corresponding result about multiplier ideals in 
Proposition~\ref{invar_finite_mult}. Note that the hypothesis in Theorem~\ref{finite_test}
is satisfied if $p={\rm char}(L)$ does not divide $[K(Y') : K(Y)]$. 

\section{The conjectural connection between multiplier ideals and test ideals}\label{conjectural_connection}

The following is the main conjecture relating multiplier ideals and test ideals.

\begin{conjecture}\label{conj_multiplier}
Let $Y$ be a normal, irreducible scheme over an algebraically closed field $k$ of characteristic zero.
Suppose that $D$ is a Weil divisor on $Y$ such that $K_Y+D$ is Cartier, and  $\fra$ is a nonzero ideal on $Y$. Given a model $Y_A$ of $Y$ over a ring $A\subset k$ of finite type over $\ZZ$,
there is a dense set of closed points $S\subset\Spec A$ such that 
\begin{equation}\label{eq_conj_multiplier}
\tau(Y_s,D_s, \fra_s^{\lambda})=\cJ(Y,D, \fra^{\lambda})_s
\,\,\text{for all}\,\,\lambda\in\RR_{\geq 0}\,\,\text{and all}\,\, s\in S. 
\end{equation}
Furthermore, if we have finitely many triples as above
$(Y^{(i)},D^{(i)},\fra^{(i)})$, and corresponding models over $\Spec A$, then there is
a dense open subset of closed points in $\Spec A$ such that (\ref{eq_conj_multiplier})
holds for each of these triples.
\end{conjecture}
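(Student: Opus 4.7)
The plan is to reduce Conjecture~\ref{conj_multiplier} to the smooth, boundary-free case already covered by Theorem~\ref{thm_introd}, exploiting the behavior of multiplier and test ideals under finite covers. The first step is a series of standard reductions: by working locally we may take $Y=\Spec R$ affine; Propositions~\ref{Skoda} and \ref{Skoda_test}, together with Noetherianity, restrict attention to finitely many exponents $\lambda$; Proposition~\ref{general_elements} and its Bertini-type analogue for test ideals allow us to replace $\fra$ by a principal ideal $(f)$; and writing $D=D_+-H$ with $D_+$ effective and $H$ Cartier, Proposition~\ref{add_divisor} paired with identity~(\ref{eq3_def_test}) lets us assume $D$ is effective.

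Next I would pass to a finite surjective separable cover $\mu\colon Y'\to Y$ with $Y'$ smooth and with the degree $[K(Y'):K(Y)]$ coprime to the residue characteristics at a dense subset of closed points of $\Spec A$; such a cover can be produced, for instance, by combining Kawamata-type tame covers adapted to the components of the Cartier divisor $K_Y+D$ with equivariant resolutions and further tame covers needed to keep the total map finite. Setting $\fra'=\fra\cdot\cO_{Y'}$ and defining $D_{Y'}$ by $K_{Y'}+D_{Y'}\sim\mu^*(K_Y+D)$, Proposition~\ref{invar_finite_mult} (with Remark~\ref{rem_invar_finite_mult}) and Theorem~\ref{finite_test} give
\[
\cJ(Y,D,\fra^\lambda)=\mu_*\cJ(Y',D_{Y'},(\fra')^\lambda)\cap\cO_Y,\qquad
\tau(Y_s,D_s,\fra_s^\lambda)=(\mu_s)_*\tau(Y'_s,D_{Y',s},(\fra'_s)^\lambda)\cap\cO_{Y_s}
\]
at closed points $s$ of good residue characteristic. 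A generic base-change argument---applied to the finitely many $\lambda$ surviving the first paragraph---lets reduction mod $p$ commute with $\mu_*$, so the statement on $Y$ follows once one establishes
\[
\tau(Y'_s,D_{Y',s},(\fra'_s)^\lambda)=\cJ(Y',D_{Y'},(\fra')^\lambda)_s
\]
for a dense set of $s$ and all $\lambda$ simultaneously, on the \emph{smooth} variety $Y'$.

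The essential obstacle is that this reduced problem is still not covered by Theorem~\ref{thm_introd}: the boundary $D_{Y'}$ is nonzero and, because $\mu$ is ramified along the support of $D$ and along whatever resolution step is built into its construction, is generally \emph{not effective}. The semistable-reduction approach to Theorem~\ref{thm_introd} terminates with a reduced SNC divisor carrying no log-boundary, and as the authors note, absorbing a non-effective $D_{Y'}$ is what their method cannot handle. To push through the general conjecture I would attempt an iterated construction: after the initial cover, apply semistable reduction to $f$ on $Y'$, and then take further ramified covers tailored to the components of $D_{Y'}$ to absorb its non-integer part into the monomial structure of the semistable model, arriving eventually at a smooth variety with zero boundary where Conjecture~\ref{conj_introd} applies directly. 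The two hard points, and the reason I expect this to be where Conjecture~\ref{conj_multiplier} actually breaks, are (i) controlling the Frobenius action on the top cohomology of each reduced SNC divisor in this tower so that ordinariness can be invoked at every stage, and (ii) pinning down a \emph{single} dense $S\subset\Spec A$ on which equality holds for all $\lambda$ at once---the latter being the uniformity in $\lambda$ that distinguishes Conjecture~\ref{conj_multiplier} from the per-$\lambda$ result of \cite{HY}.
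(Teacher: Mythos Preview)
The statement you are attempting to prove is Conjecture~\ref{conj_multiplier}, and the paper does \emph{not} prove it. The paper proves only the special case where $Y$ is nonsingular (Conjecture~\ref{conj_introd2}), conditionally on Conjecture~\ref{conj_introd}; the authors state explicitly in the introduction that their semistable-reduction method ``does not allow us to handle at present these more general versions.'' So there is no proof in the paper to compare against, and your proposal is an attempt to go beyond what the paper establishes.

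Your reduction has a genuine gap at the second step. You propose to pass to a finite surjective cover $\mu\colon Y'\to Y$ with $Y'$ \emph{smooth}. Such a cover does not exist for a general normal variety: a normal singularity admits a finite smooth cover only if it is a quotient singularity, and the hypothesis that $K_Y+D$ is Cartier does not force this. (For instance, the cone over a smooth plane curve of degree $\geq 4$ is normal Gorenstein but not a quotient singularity.) The phrase ``Kawamata-type tame covers \ldots\ with equivariant resolutions and further tame covers needed to keep the total map finite'' conflates two incompatible operations: resolution is birational, not finite, and composing with a finite map afterward does not recover finiteness over $Y$. Without this step the entire reduction to the smooth case collapses.

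There is a second, smaller issue in your first paragraph. You write that Skoda plus Noetherianity ``restrict attention to finitely many exponents $\lambda$.'' Skoda only reduces to $\lambda<m$; the multiplier-ideal side then has finitely many candidate jumping numbers, but the $F$-jumping numbers of $\tau(Y_s,D_s,\fra_s^{\lambda})$ vary with $s$, so there is no uniform finite set of $\lambda$ to check across all $s$. The paper handles the uniformity in $\lambda$ structurally (via the reduced SNC divisor produced by semistable reduction and Lemma~\ref{main_ingredient}), not by a finiteness reduction. Your own final paragraph correctly identifies this uniformity as one of the two hard points, but the earlier claim that it has already been dispatched is not justified.
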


One can formulate variants the above conjecture in more general settings. For example, $D$ may be assumed to be a $\QQ$-divisor such that some multiple of $K_Y+D$ is Cartier, and 
one can replace the ideal $\fra$ by finitely many ideals
$\fra_1,\ldots,\fra_r$. In the latter case one has to consider the corresponding \emph{mixed} multiplier and test ideals. On the other hand, in our main result we will restrict ourselves to the case when $X$
is nonsingular. In particular, in this case $D$ is Cartier, and therefore (\ref{eq_conj_multiplier})
holds if and only if it holds when $D=0$. Therefore in this case 
Conjecture~\ref{conj_multiplier} reduces to Conjecture~\ref{conj_introd2} in the Introduction.
For examples related to the above conjecture in the case of an ambient nonsingular variety, see
\cite[\S 3]{Mustata} and \cite[\S 4]{MTW}.

The inclusion ``$\subseteq$" in (\ref{eq_conj_multiplier}) is due to Hara and Yoshida
\cite{HY}. In fact, this inclusion holds for an open subset of closed points in $\Spec A$.
It is a consequence of the more precise result below.
We include a proof, since this is particularly easy with the alternative definition 
of test ideals that we are using.

\begin{proposition}\label{one_inclusion}
Let $Y$ be a normal irreducible scheme over a perfect field $L$ of positive characteristic $p$.
Suppose that $D$ is a divisor on $Y$ such that $K_Y+D$ is Cartier, 
and  $\fra$ is a nonzero ideal on $Y$. If $\pi\colon X\to Y$
is a proper birational morphism, with $X$ nonsingular, 
$\fra\cdot\cO_X=\cO_X(-G)$ for a divisor $G$, and ${\rm Supp}(G)\cup {\rm Supp}(D_X)$
has simple normal crossings, where the divisor  $D_X$ on $X$ is
defined as in \S\ref{mult_ideals}, then
\begin{equation}\label{eq_one_inclusion}
\tau(Y,D, \fra^{\lambda})\subseteq\pi_*\cO_X(-D_X-\lfloor \lambda G\rfloor)
\end{equation}
for every $\lambda\in\RR_{\geq 0}$.
\end{proposition}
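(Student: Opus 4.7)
The plan is to invoke the minimality characterization of the test ideal and reduce the containment to an explicit local computation on $X$ via Example~\ref{computation_SNC}.

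First I would reduce to the case when $D$ is effective. Working locally on $Y$, choose a Cartier divisor $D'$ such that $D+D'$ is effective. Since $(D+D')_X = D_X + \pi^* D'$ and $\pi$ is proper birational, the projection formula gives
$$\pi_*\cO_X\bigl(-(D+D')_X - \lfloor \lambda G\rfloor\bigr) = \cO_Y(-D') \cdot \pi_*\cO_X(-D_X - \lfloor \lambda G\rfloor),$$
while~(\ref{eq3_def_test}) gives $\tau(Y, D+D', \fra^\lambda) = \tau(Y,D,\fra^\lambda) \cdot \cO_Y(-D')$. Hence~(\ref{eq_one_inclusion}) for $D$ and for $D+D'$ are equivalent, and I may assume $D$ is effective. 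Then the negative part of $D_X + \lfloor \lambda G \rfloor$ is supported on the exceptional locus of $\pi$, so $J := \pi_* \cO_X(-D_X - \lfloor \lambda G \rfloor)$ is a nonzero coherent ideal of $\cO_Y$.

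By the minimality in the definition of $\tau(Y, D, \fra^\lambda)$, it then suffices to verify that
$$\phi_D^{(e)}\bigl(F_*^e(\fra^{\lceil \lambda(p^e-1)\rceil}\, J \cdot \cO_Y((1-p^e)(K_Y + D)))\bigr) \subseteq J$$
for every $e \geq 1$. Using the commutative diagram~(\ref{diag3_3}), the image on the left can be computed as $\pi_*\phi_{D_X}^{(e)}$ applied to the pullback of the argument. On $X$, since $\fra \cdot \cO_X = \cO_X(-G)$ and the pullback of any local section of $J$ lies in $\cO_X(-D_X - \lfloor \lambda G\rfloor)$, a local section of the transported sheaf lies in
$$\cO_X\bigl(-D_X - \lfloor \lambda G\rfloor - \lceil \lambda(p^e-1)\rceil G + (1-p^e)(K_X + D_X)\bigr).$$
It is therefore enough to show that $\phi_{D_X}^{(e)}$ sends this sheaf into $\cO_X(-D_X - \lfloor \lambda G\rfloor)$ as fractional ideals on $X$.

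Writing $G = \sum_i g_i E_i$ (with $g_i \geq 0$) and $D_X = \sum_i d_i E_i$ on the SNC support $E$, the computation in Example~\ref{computation_SNC} (applied with auxiliary divisor $D' := D_X + \lfloor \lambda G\rfloor + \lceil \lambda(p^e-1)\rceil G$, and extended from the effective case to arbitrary $D'$ via the same coordinate formula~(\ref{formula_t})) identifies the image as the fractional ideal $\cO_X(-D_X - F)$ with
$$F = \sum_i \left\lfloor \frac{\lfloor \lambda g_i\rfloor + \lceil \lambda(p^e-1)\rceil g_i}{p^e}\right\rfloor E_i.$$
The desired containment on $X$ reduces to $F \geq \lfloor \lambda G \rfloor$, i.e.\ to the coefficient-wise inequality
$$\lfloor \lambda g_i\rfloor + \lceil \lambda(p^e-1)\rceil g_i \geq p^e \lfloor \lambda g_i\rfloor,$$
which is immediate from $\lceil \lambda(p^e-1)\rceil g_i \geq \lambda (p^e-1) g_i \geq \lfloor \lambda g_i\rfloor (p^e-1)$ (using $g_i \geq 0$ and $\lambda g_i \geq \lfloor \lambda g_i\rfloor$). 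Pushing forward to $Y$ then places the image in $\pi_*\cO_X(-D_X - F) \subseteq J$.

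The main obstacle, in my view, is the careful bookkeeping surrounding the isomorphism $\rho$ and the commutativity of~(\ref{diag3_3}) when restricted to the subsheaves cut out by $\fra^{\lceil \lambda(p^e-1)\rceil}$ and by $J$, together with the mild extension of Example~\ref{computation_SNC} needed to accommodate an auxiliary divisor having negative exceptional components coming from $D_X$. Both points are local in nature and reduce to the coordinate description~(\ref{formula_t}) of the Cartier operator.
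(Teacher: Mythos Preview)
Your proof is correct and follows essentially the same strategy as the paper: reduce to $D$ effective, use the minimality characterization of the test ideal, transport to $X$ via the commutative diagram~(\ref{diag3_3}), and verify the inclusion there using the SNC computation of Example~\ref{computation_SNC}. The only cosmetic difference is that the paper packages the computation on $X$ by first identifying $\cO_X(-D_X-\lfloor\lambda G\rfloor)$ with the test ideal $\tau(X,D_X,\frb^{\lambda})$ (for $\frb=\cO_X(-G)$) and then invoking its defining property, whereas you unwind this directly into the explicit coefficient inequality; the content is the same.
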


\begin{proof}
After replacing $Y$ by each of the elements of a suitable open cover of $Y$, we may assume
that there is a Cartier divisor $D'$ on $Y$ such that $D+D'$ is effective. Since
it is enough to prove (\ref{eq_one_inclusion}) with $D$ replaced by $D+D'$, we may assume that
$D$ is effective.

Let $J$ denote the right-hand side of (\ref{eq_one_inclusion}). It follows from the minimality in the definition of the test ideal that in order to prove the inclusion in (\ref{eq_one_inclusion}),
it is enough to show that for every $e\geq 1$ we have the inclusion in
(\ref{eq_def_test}). Let us fix such $e$. Since $X$ is nonsingular and ${\rm Supp}(G)\cup {\rm Supp}(D_X)$ has simple normal crossings, if $\frb=\cO_X(-G)$, then $J':=\tau(X,D_X, \frb^{\lambda})
=\cO_X(-D_X-\lfloor\lambda G\rfloor)$. Indeed, this is is an easy consequence of the formula (\ref{eq2_def_test})
and of the one in Example~\ref{computation_SNC}
(note that in this case the
$c$ in (\ref{eq2_def_test}) can be taken to be a power of the defining equation of ${\rm Supp}(D_X)\cup {\rm Supp}(F)$).

By definition, we have 
\begin{equation}\label{eq2_one_inclusion}
\phi_{D_X}^{(e)}(F_*^e(\frb^{\ell} J'\cdot \pi^*(\cL)))\subseteq
J',
\end{equation}
where $\cL=\cO_Y((1-p^e)(K_Y+D))$ and $\ell=\lceil\lambda(p^e-1)\rceil$.
We now use the commutativity of (\ref{diag3_3}). With the notation therein we have
$\rho(\fra^{\ell}J\cdot \cL)\subseteq
\pi_*(\frb^{\ell}J'\cdot \pi^*(\cL))$. Therefore
$$\phi_D^{(e)}(F_*^e(\fra^{\ell}J\cdot \cL))=
\pi_*(\phi_{D_X}^{(e)})\left(F^e_*(\rho)(F^e_*(\fra^{\ell}J\cdot \cL))\right)
\subseteq\pi_*(\phi_{D_X}^{(e)})\left(F^e_*(\rho(\fra^{\ell}J\cdot \cL))\right)$$
$$\subseteq\pi_*\left(\phi_{D_X}^{(e)}(F^e_*(\frb^{\ell}J'\cdot\pi^*(\cL)))\right)
\subseteq\pi_*(J')=J,$$
where the last inclusion follows by applying $\pi_*$ to (\ref{eq2_one_inclusion}).
Therefore we have the inclusion in (\ref{eq_def_test}) for $J$, and this completes the proof
of the proposition.
\end{proof}

Note that in the setting of the conjecture, it is known that if we fix $\lambda$, then
we get the equality in (\ref{eq_conj_multiplier}) for all closed points in an open subset 
of $\Spec A$ (depending on $\lambda$). This was proved by Hara and Yoshida in \cite{HY}, relying on ideas that had been used also in \cite{Hara} and \cite{MS} \footnote{The result in \cite{HY} only treats the case of a local ring,
since one uses the tight closure approach to test ideals. However, one can modify the proof therein 
to give the assertion in our setting.}.

We end this section with the following proposition, that allows us to only consider
Conjecture~\ref{conj_introd2} in the case of principal ideals on nonsingular affine varieties.

\begin{proposition}\label{prop_reduction_principal}
In order to prove Conjecture~\ref{conj_introd2}, it is enough to consider the case when
$Y$ is an affine nonsingular variety and $\fra=(f)$ is a principal ideal
${\rm (}$but allowing several such 
pairs${\rm )}$.
\end{proposition}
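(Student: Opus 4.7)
The plan is a two-step reduction: first from an arbitrary nonsingular $Y$ to an affine $Y$, and then from an arbitrary nonzero $\fra$ to a principal $\fra=(f)$. Both reductions rely on the finitely-many-pairs form of Conjecture~\ref{conj_introd2}, since each of them produces a finite family of new pairs out of a single one.

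For the affine reduction, cover $Y$ by finitely many affine opens $U_1,\ldots,U_r$ and spread out so that the $(U_i)_A\subseteq Y_A$ are compatible models. Since both multiplier and test ideals commute with restriction to open subschemes, the equality~(\ref{eq_conj_introd2}) for $(Y,\fra)$ at a closed point $s$ is equivalent to the collection of the analogous equalities for the pairs $(U_i,\fra|_{U_i})$ at $s$. Consequently it suffices to establish the principal case simultaneously for several affine pairs, and the general finitely-many-pairs assertion is obtained by assembling the affine covers of each $Y^{(j)}$ into a single finite family.

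For the reduction to principal ideals, assume $Y=\Spec R$ is affine and nonsingular and $\fra=(h_1,\ldots,h_m)$. Choose an integer $d\geq m$ and let $g=g_1\cdots g_d$, where the $g_j$ are general $k$-linear combinations of the $h_i$. Proposition~\ref{general_elements} gives the characteristic-zero identity
$$\cJ(Y,\fra^{\lambda})=\cJ(Y,g^{\lambda/d})\quad\text{for every }\lambda<d.$$
Spread out $\fra_A$, $g_A$, and a log resolution $\pi_A\colon X_A\to Y_A$ of $(Y,\fra)$, and suppose $s$ is a closed point at which the principal-case conjecture holds for the pair $(Y,(g))$ at \emph{every} exponent, i.e.\ $\tau(Y_s,g_s^{\mu})=\cJ(Y,g^{\mu})_s$ for all $\mu\geq 0$. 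Since $g\in\fra^d$, Proposition~\ref{monotonicity} combined with Proposition~\ref{prop_power} yields $\tau(Y_s,g_s^{\lambda/d})\subseteq\tau(Y_s,\fra_s^{\lambda})$, while Proposition~\ref{one_inclusion} applied to the reduction $\pi_s$ of the log resolution supplies $\tau(Y_s,\fra_s^{\lambda})\subseteq\cJ(Y,\fra^{\lambda})_s$ on a suitable open subset of $\Spec A$. Chaining these inclusions with the displayed identity and the hypothesis at $s$ produces, for $\lambda<d$, the sandwich
$$\tau(Y_s,g_s^{\lambda/d})\subseteq\tau(Y_s,\fra_s^{\lambda})\subseteq\cJ(Y,\fra^{\lambda})_s=\cJ(Y,g^{\lambda/d})_s=\tau(Y_s,g_s^{\lambda/d}),$$
forcing all three ideals to coincide; in particular $\tau(Y_s,\fra_s^{\lambda})=\cJ(Y,\fra^{\lambda})_s$ for every $\lambda<d$.

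For $\lambda\geq d\geq m$, Skoda's theorems in characteristic zero (Proposition~\ref{Skoda}) and in characteristic $p$ (Proposition~\ref{Skoda_test}) give
$$\cJ(Y,\fra^{\lambda})=\fra\cdot\cJ(Y,\fra^{\lambda-1}),\qquad \tau(Y_s,\fra_s^{\lambda})=\fra_s\cdot\tau(Y_s,\fra_s^{\lambda-1}),$$
so an induction on $\lceil\lambda\rceil$ propagates the equality from $\lambda<d$ to every $\lambda\geq 0$; in practice only the finitely many jumping numbers of $(Y,\fra)$ in $[0,d)$ need to be verified. The most delicate point, which I expect to be the real content of the argument, is the sandwich in the second step: because no independent Bertini theorem for test ideals is available, the characteristic-zero identity of Proposition~\ref{general_elements} must be transferred to characteristic $p$ solely through the one-sided inclusion of Proposition~\ref{one_inclusion}, and it is essential that the principal-case hypothesis be invoked at \emph{all} exponents at once in order to close the chain of inclusions.
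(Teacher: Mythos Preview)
Your proof is correct and follows essentially the same approach as the paper's: reduce to affine opens, then use Proposition~\ref{general_elements} with a product $g$ of general linear combinations, combine the one-sided inclusion from Proposition~\ref{one_inclusion} with the test-ideal monotonicity chain $\tau(Y_s,g_s^{\lambda/d})\subseteq\tau(Y_s,\fra_s^{\lambda})$ coming from Propositions~\ref{monotonicity} and~\ref{prop_power}, and extend past $\lambda=d$ via Skoda. The only cosmetic difference is that the paper takes $d=m$ and writes the argument as a single chain establishing $\cJ(Y,\fra^{\lambda})_s\subseteq\tau(Y_s,\fra_s^{\lambda})$ (having noted the reverse inclusion up front), whereas you package the same inclusions as a sandwich; the ingredients and logic are identical.
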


\begin{proof}
Since every $Y$ admits a finite affine open cover $Y=\bigcup_iU_i$, and since proving the conjecture
for $(Y, \fra)$ is equivalent to proving it (simultaneously) for all $(U_i,\fra\vert_{U_i})$,
it follows that it is enough to consider the case when for all pairs we treat, the ambient scheme $Y$ is affine and nonsingular.

For such a pair $(Y,\fra)$,
let $h_1,\ldots,h_m$ be generators of $\fra$.
It follows from Proposition~\ref{one_inclusion} that we only need to guarantee the inclusion
\begin{equation}\label{eq_enough}
\cJ(Y, \fra^{\lambda})_s\subseteq\tau(Y_s,\fra_s^{\lambda}).
\end{equation}
Furthermore, in light of Propositions~\ref{Skoda} and \ref{Skoda_test} it is enough to only consider
the case $\lambda<m$. 

Let $g_1,\ldots,g_m$ be general linear combinations of the $h_i$ with coefficients in $k$,
and $g=\prod_{i=1}^m g_i$, so that by Proposition~\ref{general_elements} we have 
$\cJ(Y,\fra^{\lambda})=\cJ(Y,g^{\lambda/m})$ for all $\lambda< m$. 
As we have seen in \S\ref{reduction_mod_p_multiplier}, 
in the case of multiplier ideals of bounded exponents 
we only have to consider finitely many such exponents (the candidate jumping numbers), hence we may assume after taking a model over $A$ that for
every closed point $s\in\Spec A$ we have
\begin{equation}\label{eq_reduction_principal2} 
\cJ(Y,\fra^{\lambda})_s
=\cJ(Y,g^{\lambda/m})_s
\end{equation}
for all $\lambda<m$. 

Suppose now that we can find a dense set $S$ of closed points in $\Spec A$ such that
\begin{equation}\label{eq_reduction_principal1}
\cJ(Y,g^{\lambda/m})_s\subseteq \tau(Y_s,g_s^{\lambda/m})
\end{equation} 
for every $s\in S$ and every $\lambda<m$.
Since $g\in\fra^m$, we have
by Propositions~\ref{monotonicity}  and \ref{prop_power}
\begin{equation}\label{eq_reduction_principal3}
\tau(Y_s,g_s^{\lambda/m})\subseteq\tau(Y_s,(\fra_s^m)^{\lambda/m})
\subseteq\tau(Y_s,\fra_s^{\lambda})
\end{equation}
for every $s\in S$. Putting together (\ref{eq_reduction_principal2}), (\ref{eq_reduction_principal1}),
and (\ref{eq_reduction_principal3}), we get (\ref{eq_enough}), which completes the proof of the proposition.
\end{proof}

\section{A conjecture regarding the Frobenius action on the cohomology of the structure sheaf}\label{conjecture_Frobenius_action}

In this section we discuss our conjecture about Frobenius actions, and deduce some consequences. Let $k$ be an algebraically closed field of characteristic zero.
We will freely use the notation and notions introduced in \S\ref{p_linear}
 and \S\ref{reduction_mod_p}. 
Recall the conjecture made in the Introduction:
suppose that
 $X$ is a connected, nonsingular $n$-dimensional projective algebraic variety over $k$,
and $X_A$ is a model of $X$ over the finitely generated $\ZZ$-subalgebra $A$ of $k$.
Conjecture~\ref{conj_introd} asserts that
there is
a dense set of closed points $S\subset\Spec A$ such that the Frobenius action
$F\colon H^n(X_s,\cO_{X_s})\to H^n(X_s,\cO_{X_s})$ 
is semisimple for every $s\in S$.

\begin{remark}\label{rem_ordinary1}
In fact, one expects that the analogous assertion would be true for the Frobenius action on each of the
cohomology vector spaces $H^i(X_s,\cO_{X_s})$. Moreover, it is expected that there is a dense set
of closed points $s\in\Spec A$ such that each $X_s$ is ordinary in the sense of Bloch and Kato \cite{BK} (see also \cite[Expos\'{e} III]{CL} for a nice introduction to ordinary varieties). As follows from
\cite[Proposition 7.3]{BK}, if $X_s$ is ordinary, then the action of Frobenius on the Witt vector cohomology
$H^i(X_s, W\cO_{X_s})$ is bijective. Note that we have an exact sequence of sheaves of abelian groups
$$0\to W\cO_{X_s}\overset{V}\to W\cO_{X_s}\to\cO_{X_s}\to 0$$
that is compatible with the action of Frobenius, 
where $V$ is the \emph{Verschiebung} operator. From the long exact
sequence 
$$H^i(X_s,W\cO_{X_s})\to H^i(X_s, W\cO_{X_s})\to H^i(X_s,\cO_{X_s})\to 
H^{i+1}(X_s,W\cO_{X_s})\to H^{i+1}(X_s,W\cO_{X_s})$$
that is compatible with the action of Frobenius, and the 
5-Lemma, it follows that Frobenius acts bijectively on $H^i(X_s,\cO_{X_s})$.

However, our hope is that proving that the Frobenius action on $H^n(X_s,\cO_{X_s})$ is semisimple would be easier than showing that $X_s$ is ordinary.
\end{remark}

\begin{remark}\label{rem_ordinary2}
If Conjecture~\ref{conj_introd} holds, then given finitely many varieties $X^{(1)},\ldots,X^{(r)}$ as above,
with $\dim(X^{(i)})=d_i$,
we may consider models $X^{(1)}_A,\ldots,X^{(r)}_A$ over $A$. In this case, there is a dense set of
closed points $S\subset\Spec A$ such that the action of $F$ on each cohomology group $H^{d_i}(X^{(i)}_s,
\cO_{X^{(i)}_s})$, with $s\in S$, 
is semisimple. Indeed, it is enough to apply the conjecture for $X=X^{(1)}\times\cdots\times X^{(r)}$,
using Remark~\ref{rem_Frob_product}
and the fact that by K\"{u}nneth's Formula we have
$$H^d(X_s,\cO_{X_s})={\bigotimes_{i=1}^r}H^{d_i}(X_s^{(i)},\cO_{X_s^{(i)}}),$$
where $d=\dim(X)=\sum_{i=1}^rd_i$.
\end{remark}

\begin{proposition}\label{reduction_to_number_field}
In order to prove Conjecture~\ref{conj_introd} for every algebraically closed field $k$
of characteristic zero, it is enough to prove it for the field of algebraic numbers $k=\overline{\QQ}$.
\end{proposition}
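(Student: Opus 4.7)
My plan is to reduce any model of $X$ over an algebraically closed $k$ to the $\overline{\QQ}$-case by exhibiting a $\overline{\QQ}$-point of the given model, and then invoking Remark~\ref{enough_nonempty} so as to sidestep any need to propagate density globally on the original $\Spec A$.

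Given a model $X_A\to\Spec A$ with $A\subset k$ a finitely generated $\ZZ$-algebra, I would first arrange (using the formalism of \S\ref{reduction_mod_p}) that $X_A\to\Spec A$ is smooth and projective; because $X$ is connected over the algebraically closed field $k$ and $X_A\to\Spec A$ is proper and flat, the rank of the direct image of $\cO_{X_A}$ is locally constant on $\Spec A$, and one deduces that every geometric fiber of $X_A\to\Spec A$ is connected. Now I would apply Hilbert's Nullstellensatz to the finitely generated $\QQ$-algebra $A\otimes_{\ZZ}\QQ$: any maximal ideal $\mathfrak{m}\subset A\otimes_{\ZZ}\QQ$ has residue field a number field, which embeds into $\overline{\QQ}$. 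Pulling back along $A\to A\otimes_{\ZZ}\QQ$ yields a prime $\mathfrak{p}\subset A$ with $A/\mathfrak{p}\hookrightarrow\overline{\QQ}$; writing $B\subset\overline{\QQ}$ for the image, $B=A/\mathfrak{p}$ is a finitely generated $\ZZ$-subalgebra of $\overline{\QQ}$, and $\iota\colon\Spec B\hookrightarrow\Spec A$ is a closed immersion.

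The pullback $X_B:=X_A\times_{\Spec A}\Spec B$ is smooth and projective over $B$ and inherits geometrically connected fibers from $X_A$; hence its base change $X_B\otimes_B\overline{\QQ}$ is a smooth, connected, projective $\overline{\QQ}$-variety of dimension $n$, for which $X_B\to\Spec B$ is a model over the finitely generated $\ZZ$-subalgebra $B\subset\overline{\QQ}$. Applying Conjecture~\ref{conj_introd} over $\overline{\QQ}$ to this variety and this model, one obtains a dense set $\Sigma\subset\Spec B$ of closed points $s$ at which Frobenius acts bijectively on $H^n((X_B)_s,\cO_{(X_B)_s})$. Each such $s$ is also a closed point of $\Spec A$, since closed immersions of schemes of finite type over $\ZZ$ send closed points to closed points; moreover the fiber of $X_A\to\Spec A$ at $\iota(s)$ coincides with $(X_B)_s$ because $\mathfrak{p}\subset s$, so Frobenius is bijective on $H^n((X_A)_{\iota(s)},\cO_{(X_A)_{\iota(s)}})$ as well.

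This exhibits, for the arbitrarily chosen model $X_A$, at least one closed point of $\Spec A$ at which the Frobenius is semisimple on $H^n$ of the fiber. The property ``Frobenius is semisimple on $H^n(W,\cO_W)$'' for $W$ of finite type over a finite field is preserved under finite base field extensions by the discussion in \S\ref{p_linear}, so Remark~\ref{enough_nonempty} upgrades this one-point-per-model statement to the desired density of such points in every model, giving Conjecture~\ref{conj_introd} over $k$. The argument has no serious obstacle: the existence of $\overline{\QQ}$-points on a finite-type $\ZZ$-scheme with nonzero generic fiber over $\QQ$ is immediate from Nullstellensatz, and the remainder is a formal application of Remark~\ref{enough_nonempty}; the only mildly delicate point is the verification that base change preserves geometric connectedness of fibers, which in turn rests on the generic connectedness of $X_A\to\Spec A$.
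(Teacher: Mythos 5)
Your proof is correct and follows essentially the same route as the paper: reduce via Remark~\ref{enough_nonempty} to finding a single good closed point per model, pass to a number field residue through $A\otimes_{\ZZ}\QQ$, and apply the $\overline{\QQ}$-case. The one genuine difference is your choice of auxiliary base: you set $B=A/\frp$, so that $\Spec B\hookrightarrow\Spec A$ is a closed immersion and a closed point $s\in\Spec B$ has \emph{the same} residue field and \emph{the same} fiber as its image in $\Spec A$, making the transfer of the Frobenius conclusion immediate. The paper instead takes $B=(O_K)_h$, a localization of the ring of integers of the number field $K$, which dominates $A/\frp$ but is not a quotient of $A$; there the image $s$ of a closed point $t\in\Spec B$ satisfies only $k(s)\hookrightarrow k(t)$ finite, so one needs the observation (from \S\ref{p_linear}) that semisimplicity of a $p$-linear map is preserved under perfect base extension plus flat base change of cohomology. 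Your version is marginally cleaner on this final step; the paper's version has the small advantage that $B$ is automatically normal (a localized Dedekind domain), but nothing in the argument actually uses this. Both are fine, and both rest on the same groundwork from \S\ref{reduction_mod_p} (arranging smoothness, projectivity, and geometrically connected fibers of $X_A\to\Spec A$), which you correctly invoke.
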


\begin{proof}
Suppose that $X$ is defined over $k$, and let $X_A$ be a model over $A$, where $A\subset k$
is a $\ZZ$-algebra of finite type. As pointed out in Remark~\ref{enough_nonempty}, it is enough
to show that there is a closed point $s\in\Spec A$ such that the Frobenius action on 
$H^n(X_s,\cO_{X_s})$ is semisimple. 

The $\QQ$-algebra $A_{\QQ}:=A\otimes_{\ZZ}{\QQ}$
is finitely generated, hence if $\frm$ is a prime ideal of $A$ such that $\frm A_{\QQ}$
is a maximal ideal of $A_{\QQ}$, then $K=A_{\QQ}/\frm A_{\QQ}$ is a finite extension
of $\QQ$. If $O_K$ is the ring of integers in $K$, then using the finite generation of $A$ over $\ZZ$
we see that there is a nonzero $h\in O_K$ such that the surjective morphism $A_{\QQ}\to K$
induces a morphism $A\to B=(O_K)_h$. 

Let $X_B=X_A\times_{\Spec A}\Spec B$ and 
$X_{\overline{\QQ}}=X_A\times_{\Spec A}\Spec\overline{\QQ}$,
where the morphism $A\to\overline{\QQ}$ is given by the composition $A\to K\hookrightarrow
\overline{\QQ}$. Since we may assume that $X_A$ is smooth and projective over 
$\Spec A$, with geometrically connected generic fiber,
it follows that $X_{\overline{\QQ}}$ is connected, smooth and projective over $\overline{\QQ}$, and clearly 
$X_B$ is a model of $X_{\overline{\QQ}}$ over $\Spec B$. If we know 
Conjecture~\ref{conj_introd} over $\overline{\QQ}$, then it follows that there is a closed point
$t\in \Spec B$ such that the Frobenius action on $H^n((X_{\overline{\QQ}})_t,
\cO_{(X_{\overline{\QQ}})_t})$ is semisimple.
If $s\in\Spec A$ is the image of $t$, then we have a finite field extension 
$k(s)\hookrightarrow k(t)$, and 
$(X_{\overline{\QQ}})_t=X_s\times_{\Spec k(s)}\Spec k(t)$.
Since $H^n((X_{\overline{\QQ}})_t,
\cO_{(X_{\overline{\QQ}})_t})\simeq H^n(X_s,\cO_{X_s})\otimes_{k(s)}k(t)$,
we conclude that the Frobenius action on $H^n(X_s,\cO_{X_s})$
is semisimple.
\end{proof}

\begin{example}\label{ex_abelian_variety}
If $X$ is a $g$-dimensional abelian variety, then we may assume that $X_s$ is an abelian variety over $k(s)$
for every closed point  $s\in \Spec A$. In this case $h^1(X_s,\cO_{X_s})=g$, and the action of Frobenius on 
$H^g(X_s,\cO_{X_s})\simeq\wedge^gH^1(X_s,\cO_{X_s})$ is semisimple if and only if 
the action of Frobenius on $H^1(X_s,\cO_{X_s})$ is semisimple. This is the case if and only if
$X_s$ is ordinary in the usual sense, that is, if $X_s\times_{\Spec k(s)}\Spec\overline{k(s)}$
has $p^g$ $p$-torsion points, where $p={\rm char}(k(s))$.

By Proposition~\ref{reduction_to_number_field}, in order to check Conjecture~\ref{conj_introd}
in this case we may assume that $X$ is defined over $\overline{\QQ}$. The conjecture is then known 
if $g\leq 2$, but it is open in general. The case of elliptic curves is classical, while the case
$g=2$ is due to Ogus \cite[Proposition 2.7]{Ogus} (see also \cite[Th\'{e}or\`{e}me 6.3]{CL} for a proof of this result).
\end{example}

\begin{example}\label{ex_curves}
If $X$ is a smooth projective 
curve of genus $g$, then the action of Frobenius on $H^1(X_s,\cO_{X_s})$
is semisimple if and only if the Jacobian of $X_s$ is ordinary in the usual sense. As pointed out in the previous example, Conjecture~\ref{conj_introd} is known in this case for $g\leq 2$, but it is open 
even in this case for $g\geq 3$.
\end{example}

In what follows we will assume Conjecture~\ref{conj_introd} (for all smooth, connected projective varieties), and then deduce several stronger
versions, working in the relative setting, and in the presence of a simple normal crossings divisor.
We start by considering a pair $(X,E)$, where $X$ is a connected, nonsingular $n$-dimensional projective variety
over $k$, and $E=E_1+\ldots+E_r$ is a reduced simple normal crossings divisor on $X$. 
Let $X_A$ be a model of $X$ over $\Spec A$. We may assume that $X_A$ is smooth over $A$,
and that we have irreducible divisors $(E_i)_A$ on $X_A$ giving models for the $E_i$, such that
every intersection $(E_{i_1})_A\cap\ldots\cap (E_{i_m})_A$ is smooth over $A$. In particular, 
if we put $E_A=\sum_{i=1}^r (E_i)_A$, then
for every
closed point $s\in\Spec A$, the divisor $E_s$ on $X_s$ has simple normal crossings.

\begin{lemma}\label{lem_SNC}
With the above notation, if Conjecture~\ref{conj_introd} holds, then
there is a dense set of closed points $S\subset\Spec A$ such that
 the Frobenius action
$F\colon H^{n-1}(E_s,\cO_{E_s})\to H^{n-1}(E_s,\cO_{E_s})$ is semisimple for all $s\in S$.
\end{lemma}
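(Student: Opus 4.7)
The strategy is to express $H^{n-1}(E_s,\mathcal O_{E_s})$ via the Mayer--Vietoris-type spectral sequence associated with the stratification by intersections of the components of $E$, note that on the relevant anti-diagonal the $E_1$-terms are precisely the \emph{top} coherent cohomology groups of smooth projective varieties, and then apply Conjecture~\ref{conj_introd} (in the multi-variety form of Remark~\ref{rem_ordinary2}) together with Lemma~\ref{rem_Frob_exact} to propagate semisimplicity from the $E_1$-page to the abutment.

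\textbf{Step 1 (set-up of the strata).} For each nonempty $I = \{i_0 < \dots < i_p\} \subseteq \{1,\dots,r\}$, set $E_I := E_{i_0}\cap\cdots\cap E_{i_p}$, which is smooth of pure dimension $n-1-p$. Let $\{V_\alpha\}$ be the (finite) collection of irreducible components of the various $E_I$, all of which are smooth projective over $k$. After enlarging $A$, we may assume each $V_\alpha$ has a model $(V_\alpha)_A$ that is smooth and projective over $\Spec A$ with geometrically connected fibers, and that the models $(E_I)_A := \bigcap_{i\in I}(E_i)_A$ are smooth of relative dimension $n-1-p$, decomposing into the $(V_\alpha)_A$ fiberwise.

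\textbf{Step 2 (the Frobenius-equivariant spectral sequence).} For each closed point $s\in\Spec A$, the standard \v{C}ech-type resolution
\[
0\to\mathcal O_{E_s}\to\bigoplus_{|I|=1}\mathcal O_{(E_I)_s}\to\bigoplus_{|I|=2}\mathcal O_{(E_I)_s}\to\cdots
\]
yields a first-quadrant spectral sequence
\[
E_1^{p,q}=\bigoplus_{|I|=p+1}H^q\!\bigl((E_I)_s,\mathcal O_{(E_I)_s}\bigr)\Longrightarrow H^{p+q}(E_s,\mathcal O_{E_s}).
\]
Since the absolute Frobenius is functorial with respect to arbitrary morphisms of $\FF_p$-schemes, it acts on the entire resolution, hence on every page of the spectral sequence, compatibly with all differentials and with the induced filtration on the abutment.

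\textbf{Step 3 (applying the conjecture to the top cohomology of each stratum).} The terms on the anti-diagonal $p+q=n-1$ are $E_1^{p,n-1-p}=\bigoplus_{|I|=p+1}H^{n-1-p}((E_I)_s,\mathcal O_{(E_I)_s})$, and for each such $I$, $n-1-p=\dim E_I$ is precisely the top degree. By Remark~\ref{rem_ordinary2} applied to the finite family $\{V_\alpha\}$ of smooth connected projective varieties, and by Remark~\ref{rem_Frob_product} (K\"unneth is not needed, but direct sums of $p$-linear maps split the semisimple/nilpotent decomposition), there is a dense set of closed points $S\subseteq\Spec A$ such that for every $s\in S$, Frobenius acts semisimply on $H^{\dim V_\alpha}((V_\alpha)_s,\mathcal O_{(V_\alpha)_s})$ for all $\alpha$ simultaneously, and therefore on every $E_1^{p,n-1-p}$.

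\textbf{Step 4 (propagation to the abutment).} Fix $s\in S$. The differentials of the spectral sequence are Frobenius-equivariant, so each $E_\infty^{p,n-1-p}$ is obtained from $E_1^{p,n-1-p}$ by passing to a Frobenius-stable subspace and then to a Frobenius-stable quotient; Lemma~\ref{rem_Frob_exact}(i) then yields that Frobenius acts semisimply on each $E_\infty^{p,n-1-p}$. The filtration on $H^{n-1}(E_s,\mathcal O_{E_s})$ with these associated graded pieces is Frobenius-stable, and an induction using Lemma~\ref{rem_Frob_exact}(ii) on the length of the filtration shows that Frobenius acts semisimply on $H^{n-1}(E_s,\mathcal O_{E_s})$, as desired.

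\textbf{Main point / potential obstacle.} The only real content is Step 3: one must apply the conjecture not to $X$ but to the (finitely many) irreducible components of all intersections $E_I$, and it is crucial that the relevant $E_1$-terms land in the \emph{top} degree of each stratum, which is exactly where Conjecture~\ref{conj_introd} provides semisimplicity. Steps 2 and 4 are formal once one checks Frobenius-equivariance of the resolution and invokes Lemma~\ref{rem_Frob_exact}.
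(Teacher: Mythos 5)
Your proof is correct and is essentially the paper's own argument, merely recast in the language of the Mayer--Vietoris/\v{C}ech spectral sequence: the paper breaks the same acyclic complex $0\to\cO_{E_s}\to\bigoplus_{|I|=1}\cO_{(E_I)_s}\to\cdots$ into short exact sequences via the kernel sheaves $Z^i=\Ker(d^i)$ and runs a descending induction with Lemma~\ref{rem_Frob_exact}, which is exactly the hand-rolled version of your $E_1$-to-$E_\infty$ propagation along the anti-diagonal $p+q=n-1$. The key observation in both is identical: each relevant term is a top cohomology group of a smooth projective variety (a connected component of some $E_I$), so Conjecture~\ref{conj_introd}, applied simultaneously to all these varieties via Remark~\ref{rem_ordinary2}, gives semisimplicity there.
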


\begin{proof}
Let us fix a closed point $s\in\Spec A$. 
For every subset $J\subseteq\{1,\ldots,r\}$ we put $(E_J)_s=\bigcap_{i\in J}(E_i)_s$ (of course, these sets will be empty for some $J$). Note that we have an acyclic complex
$$C^{\bullet}:\,\,\,\,\,\,\,\,
0\to C^0\overset{d^0}\to C^1\overset{d^1}\to \cdots\overset{d^{n-1}}\to C^n\to 0,$$
where $C^0=(\cO_{E})_s$, and for all $p>0$ we have
$C^p=\oplus_{|J|=p}\cO_{(E_J)_s}$. Furthermore, we have a morphism of complexes
$C^{\bullet}\to F_*C^{\bullet}$. If we put $Z^i={\rm Ker}(d^i)$ for $1\leq i\leq n-1$
and $Z^n=C^n$, then we have exact sequences
\begin{equation}\label{exact_sequence1}
H^{n-p-1}(X_s, Z^{p+1})\to H^{n-p}(X_s, Z^p)\to H^{n-p}(X_s, C^p)
\end{equation}
compatible with the action of  Frobenius. Applying Conjecture~\ref{conj_introd} to all
connected components of all the intersections $E_{i_1}\cap\ldots\cap E_{i_m}$
simultaneously (see Remark~\ref{rem_ordinary2}), we see that we have a dense set of closed points 
$S\subset \Spec A$ such that the Frobenius action on each $H^{n-p}(X_s, C^p)$ is semisimple for
$p\geq 1$ and $s\in S$. Using Lemma~\ref{rem_Frob_exact}
and the exact sequences (\ref{exact_sequence1}), we see by descending induction on
$p\leq n$ that for every $s\in S$, the Frobenius action on each $H^{n-p}(X_s,Z^p)$
is semisimple. By taking $p=1$, we get the assertion in the lemma.
\end{proof}

\begin{corollary}\label{cor_lem_SNC}
With the notation in the lemma, and still assuming Conjecture~\ref{conj_introd}, there is a dense set of closed points $S\subset\Spec A$ such that
the Frobenius action 
$$F\colon H^n(X_s,\cO(-E_s))\to H^n(X_s,\cO(-E_s))$$ is semisimple for every $s\in S$.
\end{corollary}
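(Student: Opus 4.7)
The plan is to deduce the corollary from Lemma \ref{lem_SNC} via the standard ideal/structure sheaf short exact sequence together with the $p$-linear exact-sequence criterion in Lemma \ref{rem_Frob_exact}(ii). Specifically, I would start with the Frobenius-compatible short exact sequence of sheaves
\begin{equation*}
0 \to \cO_{X_s}(-E_s) \to \cO_{X_s} \to \cO_{E_s} \to 0
\end{equation*}
on $X_s$. Taking cohomology and using that the push-forward along the closed immersion $E_s \hookrightarrow X_s$ is exact, one obtains a long exact sequence compatible with the Frobenius action. Since $E_s$ is an $(n-1)$-dimensional projective scheme over $k(s)$, one has $H^n(X_s,\cO_{E_s}) = H^n(E_s,\cO_{E_s}) = 0$ by Grothendieck vanishing, so the tail of the long exact sequence gives a Frobenius-compatible exact sequence
\begin{equation*}
H^{n-1}(E_s,\cO_{E_s}) \to H^n(X_s,\cO_{X_s}(-E_s)) \to H^n(X_s,\cO_{X_s}) \to 0.
\end{equation*}

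Next, I would arrange for the outer terms to be simultaneously Frobenius-semisimple on a dense set of closed points. Lemma \ref{lem_SNC} gives a dense set of closed points where $F$ acts semisimply on $H^{n-1}(E_s,\cO_{E_s})$; its proof works by simultaneously applying Conjecture \ref{conj_introd} (via Remark \ref{rem_ordinary2}) to all connected components of all strata $E_{i_1} \cap \cdots \cap E_{i_m}$. I would simply enlarge this collection by including $X$ itself among the smooth projective varieties to which Conjecture \ref{conj_introd} is applied. By Remark \ref{rem_ordinary2}, this yields a single dense set $S \subset \Spec A$ on which the Frobenius action is semisimple on both $H^{n-1}(E_s,\cO_{E_s})$ and $H^n(X_s,\cO_{X_s})$.

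Finally, for each $s \in S$, the exact sequence above and Lemma \ref{rem_Frob_exact}(ii) imply that the Frobenius action on $H^n(X_s,\cO_{X_s}(-E_s))$ is semisimple, which is exactly the assertion of the corollary. There is no real obstacle here beyond assembling the pieces; the only point requiring a moment of care is ensuring that both semisimplicity statements hold on a common dense set, which is handled cleanly by invoking Remark \ref{rem_ordinary2} once for the product of $X$ and all the strata of $E$.
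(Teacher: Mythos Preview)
Your proposal is correct and follows essentially the same approach as the paper: both use the Frobenius-compatible short exact sequence $0 \to \cO_{X_s}(-E_s) \to \cO_{X_s} \to \cO_{E_s} \to 0$, pass to the three-term cohomology sequence at degree $n$, and then apply Lemma~\ref{rem_Frob_exact}(ii) together with Lemma~\ref{lem_SNC} and Conjecture~\ref{conj_introd}, combined simultaneously via Remark~\ref{rem_ordinary2}. The only difference is that you justify the vanishing of $H^n(E_s,\cO_{E_s})$ via Grothendieck vanishing, whereas the paper simply writes the three-term sequence $H^{n-1}(E_s,\cO_{E_s}) \to H^n(X_s,\cO_{X_s}(-E_s)) \to H^n(X_s,\cO_{X_s})$, which is all Lemma~\ref{rem_Frob_exact}(ii) requires.
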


\begin{proof}
Consider the exact sequence
$$T^{\bullet}:\,\,\,\,0\to\cO_{X_s}(-E_s)\to\cO_{X_s}\to\cO_{E_s}\to 0.$$
Note that we have a morphism of exact sequences $T^{\bullet}\to F_*T^{\bullet}$.
Applying Lemma~\ref{rem_Frob_exact} to the exact sequence
$$H^{n-1}(E_s,\cO_{E_s})\to H^n(X_s,\cO_{X_s}(-E_s))\to H^n(X_s,\cO_{X_s}),$$
as well as Conjecture~\ref{conj_introd} to $H^n(X_s,\cO_{X_s})$ and Lemma~\ref{lem_SNC} to
$H^{n-1}(E_s,\cO_{E_s})$ (note that we can apply these simultaneously by 
Remark~\ref{rem_ordinary2}), it follows that the Frobenius action on $H^n(X_s,\cO(-E_s))$
is semisimple for all $s$ in a suitable dense set of closed points $S\subset\Spec A$.
\end{proof}

Still keeping the above notation, let $s\in\Spec A$ be a closed point. Recall that we have a canonical surjective $\cO_{X_s}$-linear map $t_s:=t_{X_s,E_s} \colon F_*(\omega_{X_s}(E_s))\to \omega_{X_s}(E_s)$ induced by the Cartier isomorphism. For every $e\geq 1$ we also consider the composition 
$t_s^e\colon F^e_*(\omega_{X_s}(E_s))\to \omega_{X_s}(E_s)$.

\begin{corollary}\label{cor2_lem_SNC}
With the notation in Lemma~\ref{lem_SNC}, and assuming that Conjecture~\ref{conj_introd} holds, there is a dense set of closed points
$S\subset \Spec A$ such that the map induced by $t_s^e$
$$H^0(X_s,F^e_*(\omega_{X_s}(E_s)))\to H^0(X_s,\omega_{X_s}(E_s))$$
is surjective for all $e\geq 1$ and all $s\in S$. 
\end{corollary}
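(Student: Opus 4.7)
The plan is to dualize $t_s^e$ via Serre duality on the smooth projective variety $X_s$ and reduce the desired surjectivity to the Frobenius semisimplicity provided by Corollary~\ref{cor_lem_SNC}.

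First, I would establish the key duality statement. Under Serre--Grothendieck duality on the smooth projective $n$-fold $X_s$, the $\cO_{X_s}$-linear map
\[
t_s^e\colon F^e_*(\omega_{X_s}(E_s))\to\omega_{X_s}(E_s)
\]
should be identified as the dual of the $\cO_{X_s}$-linear map
\[
\cO_{X_s}(-E_s)\longrightarrow F^e_*\cO_{X_s}(-E_s),\qquad a\mapsto a^{p^e},
\]
obtained by restricting the $p^e$-th power absolute Frobenius $\cO_{X_s}\to F^e_*\cO_{X_s}$ to the ideal sheaf $\cO_{X_s}(-E_s)$. This restriction is well-defined because $E_s$ is effective, so $a^{p^e}$ vanishes along $E_s$ to order at least $p^e$. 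The identification follows from Grothendieck duality for the finite morphism $F^e$ (together with $(F^e)^!\omega_{X_s}\simeq\omega_{X_s}$), and it can be verified directly in coordinates using formula \eqref{formula_t}. I expect this duality verification to be the main technical ingredient of the proof.

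Once the duality is in place, Serre duality provides a perfect pairing
\[
H^0(X_s,\omega_{X_s}(E_s))\times H^n(X_s,\cO_{X_s}(-E_s))\longrightarrow k(s),
\]
under which the map induced by $t_s^e$ on $H^0(X_s,\omega_{X_s}(E_s))$ is surjective if and only if the transpose map on $H^n$, which is exactly the $e$-fold iterate $F^e$ of the $p$-linear Frobenius action on $H^n(X_s,\cO_{X_s}(-E_s))$, is injective.

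To finish, I would apply Corollary~\ref{cor_lem_SNC} to produce a dense set $S\subset\Spec A$ of closed points such that $F$ acts semisimply on $H^n(X_s,\cO_{X_s}(-E_s))$ for every $s\in S$. By the characterizations in \S\ref{p_linear}, semisimplicity of the $p$-linear $F$ is equivalent to its injectivity (equivalently, its bijectivity), and so each iterate $F^e$ is injective as well. Combined with the duality reduction of the previous paragraph, this yields the surjectivity of $H^0(t_s^e)$ for all $e\geq 1$ and all $s\in S$, as required. Note that the single set $S$ works uniformly in $e$ precisely because injectivity of $F$ automatically propagates to all its iterates.
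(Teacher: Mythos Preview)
Your proposal is correct and follows essentially the same approach as the paper: reduce via Serre duality to the Frobenius semisimplicity on $H^n(X_s,\cO_{X_s}(-E_s))$ provided by Corollary~\ref{cor_lem_SNC}. The only cosmetic difference is that the paper first reduces to $e=1$ and then, to avoid invoking Grothendieck duality for the finite morphism $F^e$, checks surjectivity for the particular $e$ with $|k(s)|=p^e$ (so that $t_s^e$ becomes a $k(s)$-linear self-map of $H^0(X_s,\omega_{X_s}(E_s))$ and ordinary Serre duality applies directly), whereas you dualize $t_s^e$ for arbitrary $e$ via Grothendieck duality for $F^e$.
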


\begin{proof}
It is enough to show that every closed point $s\in\Spec A$ that satisfies Corollary~\ref{cor_lem_SNC} also satisfies our conclusion. As abelian groups, we have
$H^0(X_s,F^e_*(\omega_{X_s}(E_s)))=H^0(X_s,\omega_{X_s}(E_s))$, and the map induced
by $t_s^e$ is just the $e^{\rm th}$ iterate of the map induced by $t_s$. Therefore is it enough
to prove the assertion in the case $e=1$. On the other hand, this case follows if we show the surjectivity when $e$ is such that the cardinality of the residue field $k(s)$ is $p^e$. Note that in this case the map $t_s^e\colon H^0(X_s,\omega_{X_s}(E_s))\to H^0(X_s,\omega_{X_s}(E_s))$ is $k(s)$-linear.
Its Serre dual is the map $F^e\colon H^n(X_s,\cO_{X_s}(-E_s))\to H^0(X_s,\cO_{X_s}(-E_s))$, where 
$F$ denotes the Frobenius action on $H^n(X_s,\cO_{X_s}(-E_s))$. By assumtion, $F$ is semisimple
hence bijective, which implies the assertion in the lemma. 
\end{proof}

\begin{remark}\label{rem_cor2_lem_SNC}
It follows from the proofs of Lemma~\ref{lem_SNC} and of Corollaries~\ref{cor_lem_SNC}
and \ref{cor2_lem_SNC} that in order to get the assertions in these two corollaries we need to apply 
Conjecture~\ref{conj_introd} to finitely many smooth projective varieties. It follows from 
Remark~\ref{rem_ordinary2} that if we have finitely many pairs $(X^{(1)},E^{(1)}),\ldots,(X^{(m)},
E^{(m)})$ as in Corollaries~\ref{cor_lem_SNC} and \ref{cor2_lem_SNC}, then we can find a dense set of closed points $s\in\Spec A$ such that the conclusion in each of of these two corollaries holds for all these pairs. In particular, in Corollary~\ref{cor2_lem_SNC} we do not need to assume that 
$X$ is connected.
\end{remark}

We now turn to the relative setting, and state the main result of this section.

\begin{theorem}\label{thm_relative}
Suppose that Conjecture~\ref{conj_introd} holds.
Let $\pi\colon X\to T$ be a projective morphism of schemes over $k$, with $X$ nonsingular, and let
$E=E_1+\ldots+E_r$ be a reduced simple normal crossings divisor on $X$. If 
$\pi_A\colon X_A\to T_A$ and $E_A$ are models over $A$ for $\pi$ and $E$, respectively, then there is a dense set of closed points $S\subset\Spec A$ such that for every $e\geq 1$ and every $s\in S$, the induced morphism
\begin{equation}\label{eq_thm_relative}
(\pi_s)_*(F^e_*(\omega_{X_s}(E_s)))\to (\pi_s)_*(\omega_{X_s}(E_s))
\end{equation}
is surjective.
\end{theorem}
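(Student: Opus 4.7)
The plan is to reduce the relative surjectivity to the absolute Corollary~\ref{cor2_lem_SNC} applied on a projective compactification. Since surjectivity of a map of coherent sheaves is local on $T$, I may assume $T$ is affine. Using resolution of singularities in characteristic zero, I extend $\pi$ to a projective morphism $\overline{\pi}\colon\overline{X}\to\overline{T}$ with $\overline{X}$ smooth projective, $\overline{T}$ projective, and $\overline{E}+B$ a reduced simple normal crossings divisor on $\overline{X}$, where $B=(\overline{X}\smallsetminus X)_{\mathrm{red}}$. After spreading this data out over $A$, the fact that $B_s$ is disjoint from $X_s$ implies that the pushforward $\overline{\pi}_{s*}(t^e)\colon \overline{\pi}_{s*}F^e_*\omega_{\overline{X}_s}(\overline{E}_s+B_s)\to \overline{\pi}_{s*}\omega_{\overline{X}_s}(\overline{E}_s+B_s)$ restricts on $T_s\subseteq\overline{T}_s$ to the map in the statement, so it suffices to prove this pushforward is surjective as a map of coherent sheaves on the projective $\overline{T}_s$. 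Writing $t^e=t\circ F_*(t^{e-1})$ and using that $F_*$ commutes with proper pushforward, a straightforward induction reduces further to the case $e=1$.

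Fix a very ample line bundle $\mathcal{N}$ on $\overline{T}$. By generic flatness of the relevant sheaves over $A$ and a standard Castelnuovo--Mumford regularity bound, there is a single integer $m_0$ (uniform for $s$ in a dense open of $\Spec A$) such that surjectivity on $\overline{T}_s$ follows from the surjectivity on global sections after twisting by $\mathcal{N}^{m_0}$. Choose generic divisors $H_1,\ldots,H_{m_0}\in|\overline{\pi}^*\mathcal{N}|$ such that $\overline{E}+B+H$ is reduced SNC, where $H=H_1+\cdots+H_{m_0}$; this is possible by Bertini in characteristic zero and persists, after spreading out, for $s$ in a dense open of $\Spec A$. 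Using $F^*\mathcal{O}(H)=\mathcal{O}(pH)$ together with the projection formula along $F$, the required $H^0$-surjectivity translates into the surjectivity of
\[
t\colon H^0\bigl(\overline{X}_s,\,F_*\omega_{\overline{X}_s}(\overline{E}_s+B_s+pH_s)\bigr)\longrightarrow H^0\bigl(\overline{X}_s,\,\omega_{\overline{X}_s}(\overline{E}_s+B_s+H_s)\bigr),
\]
which makes sense because, as in Example~\ref{computation_SNC}, the image of the Cartier trace on $F_*\omega(\overline{E}+B+pH)$ is $\omega(\overline{E}+B+H)$ by the pole reduction $\lceil pH/p\rceil=H$.

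Finally, I apply Corollary~\ref{cor2_lem_SNC} to the reduced SNC pair $(\overline{X},\overline{E}+B+H)$: there is a dense set $S$ of closed points $s$ for which $t\colon H^0(F_*\omega_{\overline{X}_s}(\overline{E}_s+B_s+H_s))\to H^0(\omega_{\overline{X}_s}(\overline{E}_s+B_s+H_s))$ is surjective. I then use the commutative diagram
\[
\begin{CD}
F_*\omega_{\overline{X}_s}(\overline{E}_s+B_s+H_s) @>>> F_*\omega_{\overline{X}_s}(\overline{E}_s+B_s+pH_s) \\
@VtVV @VVtV \\
\omega_{\overline{X}_s}(\overline{E}_s+B_s+H_s) @>>> \omega_{\overline{X}_s}(\overline{E}_s+B_s+pH_s)
\end{CD}
\]
in which both vertical arrows are restrictions of the same Cartier trace on rational $n$-forms and the horizontal arrows are the natural inclusions: given $\eta$ in the lower-left group, a lift $\eta'$ along the left column pushes through the top inclusion to an element whose image under the right $t$ lies in the lower-left subgroup and equals $\eta$. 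The main technical obstacle is precisely this passage from the reduced SNC pair $(\overline{X},\overline{E}+B+H)$, to which Corollary~\ref{cor2_lem_SNC} applies directly, across to the non-reduced source $\overline{E}+B+pH$; the pole-reduction of Example~\ref{computation_SNC} and the above diagram chase together form the device that allows the absolute corollary to control the relative surjectivity.
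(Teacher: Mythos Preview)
Your proof is correct and follows essentially the same route as the paper's: reduce to $T$ affine, compactify to a smooth projective $\overline{X}$ over a projective $\overline{T}$ with an extended SNC divisor, twist by (the pullback of) a sufficiently ample line bundle on $\overline{T}$ so that surjectivity can be checked on $H^0$, use Bertini to make the added divisor part of an SNC configuration, and then factor through Corollary~\ref{cor2_lem_SNC} applied to the enlarged SNC pair. The only cosmetic differences are that you reduce to $e=1$ first and phrase the twist via Castelnuovo--Mumford regularity, whereas the paper keeps $e$ arbitrary and chooses $\mathcal{L}$ directly so that $\pi_*(\omega_X(E))\otimes\mathcal{L}$ is globally generated (which is really all your regularity bound is being used for).
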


\begin{proof}
Suppose first that $T$ is affine. Since $\pi$ is projective, we have a closed immersion 
$X\hookrightarrow {\mathbf P}^N\times T$, for some $N\geq 1$. Let us fix an open immersion 
$T\hookrightarrow  T'$, where $T'$ is projective. Let $\overline{X}$ be the closure of $X$ in
${\mathbf P}^N\times T'$ (with the reduced scheme structure), and $\overline{\pi}\colon \overline{X}\to T'$ the induced morphism. Since $\overline{X}\cap ({\mathbf P}^N\times T)=X$, it follows 
that $\overline{\pi}^{-1}(T)=X$. 

By hypothesis,  $X$ is nonsingular and $E$ has simple normal crossings, hence by the standard results on resolution of singularities in characteristic zero, 
there is a projective morphism $\phi\colon X'\to \overline{X}$ that is an isomorphism over $X$,
with $X'$ nonsingular,
and a reduced simple normal crossings divisor $E'$ on $X'$ such that $E'\vert_{\phi^{-1}(X)}=
\phi^{-1}(E)$. If $\pi'=\overline{\pi}\circ\phi$, then $X$ is isomorphic to $(\pi')^{-1}(T)$, and it is clear that if the 
assertion in the theorem holds for $\pi'$ and $E'$, then it also holds for $\pi$ and $E$. Therefore we may assume that $X$ and $T$ are projective. 

We now choose a very ample line bundle $\cL$ on $T$ such that $\pi_*(\omega_X(E))\otimes \cL$
is globally generated. After possibly replacing $A$ by some
localization $A_a$ we may assume that for every closed point $s\in\Spec A$ we have
$(\omega_X)_s=\omega_{X_s}$ and
$\pi_*(\omega_X(E))_s=(\pi_s)_*(\omega_{X_s}(E_s))$. In particular, 
$(\pi_s)_*(\omega_{X_s}(E_s))\otimes\cL_s$ is globally generated. 

Since $\cL$ is very ample, the linear system $|\cL|$ and its pull-back to $X$ are globally generated. It follows from Bertini's Theorem (recall that 
${\rm char}(k)=0$) that if $D'\in |\cL|$ is a general element, then $E'=\pi^*(D')$ has the property that
$E+E'$ is a reduced simple normal crossings divisor. Of course, it is enough to ensure that
(\ref{eq_thm_relative}) is surjective after tensoring with $\cL_s$, and since 
$(\pi_s)_*(\omega_{X_s}(E_s))\otimes\cL_s$ is globally generated, it is enough to show that the map
\begin{equation}\label{eq2_thm_relative}
H^0(T_s,(\pi_s)_*(F^e_*(\omega_{X_s}(E_s)))\otimes \cL_s)\to H^0(T_s, (\pi_s)_*(\omega_{X_s}(E_s))
\otimes \cL_s)
\end{equation}
is surjective. 
By the projection formula, (\ref{eq2_thm_relative}) gets identified with the map
\begin{equation}\label{eq3_thm_relative}
H^0(X_s, F^e_*(\omega_{X_s}(E_s+p^eE'_s)))\to H^0(X_s, \omega_{X_s}(E_s+E'_s)).
\end{equation}
Applying Corollary~\ref{cor2_lem_SNC} to $X$ and $E+E'$, we deduce that there is a dense set of closed points $S\subseteq\Spec A$ such that the composition
$$H^0(X_s, F^e_*(\omega_{X_s}(E_s+E'_s)))\to H^0(X_s, F^e_*(\omega_{X_s}(E_s+p^eE'_s)))\to H^0(X_s, \omega_{X_s}(E_s+E'_s))$$
is surjective for every $s\in S$. This clearly implies the surjectivity of (\ref{eq3_thm_relative}), and completes
the proof in the case when $T$ is affine. 

Note that the proof in this special case relies on an application of Corollary~\ref{cor2_lem_SNC}
for one pair. In general, we consider a finite affine cover $T=\bigcup_iU_i$.
Combining what we proved so far with
Remark~\ref{rem_cor2_lem_SNC}, we see that 
 there is a dense set of closed points $S\subseteq\Spec A$
such that the assertion in the theorem holds for all morphisms $\pi^{-1}(U_i)\to U_i$ and for all
$s\in S$.  This implies the surjectivity
of the map in (\ref{eq_thm_relative}) for every $s\in S$, which completes the proof of the 
theorem.
\end{proof}

\begin{remark}\label{rem_thm_relative}
It follows from the proof of Theorem~\ref{thm_relative}, using also Remark~\ref{rem_cor2_lem_SNC},
that given finitely many morphisms $\pi^{(i)}\colon X^{(i)}\to T^{(i)}$ and divisors $E^{(i)}$ on $X^{(i)}$ satisfying
the hypothesis in the theorem, there is a dense set of closed points of $\Spec A$ 
that satisfies the conclusion of the theorem with respect to each of the morphisms $\pi^{(i)}$.
\end{remark}

\section{The connection between the two conjectures}\label{connection_conjectures}

We can now prove our main result, stated in the Introduction.

\begin{proof}[Proof of Theorem~\ref{thm_introd}]
Let us assume that Conjecture~\ref{conj_introd} holds. Actually,  we will use its consequence in 
Theorem~\ref{thm_relative}.
It follows from Proposition~\ref{prop_reduction_principal} that in order to show that
Conjecture~\ref{conj_introd2} holds, it is enough to consider the following setup.
Suppose that $Y$ is a nonsingular, irreducible affine variety over an algebraically closed field $k$ of 
characteristic zero. Let
$\fra=(f)$ be a nonzero principal ideal on $Y$. We need to show that given a model of
$(Y,\fra)$ over $A$, where $A$ is a subalgebra of $k$ of finite type over $\ZZ$, there is a dense
set of closed points $S\subset \Spec A$ such that
\begin{equation}\label{eq_main1}
\tau(Y_s,f_s^{\lambda})=\cJ(Y,f^{\lambda})_s
\end{equation}
for all $s\in S$ and all $\lambda\in\RR_{\geq 0}$. 
Furthermore, given finitely many such pairs $(Y,\fra)$, we need to be able to do this simultaneously for all the pairs.

After covering $Y$ by suitable affine open subsets, we may assume that $f\colon Y\to\AAA^1$
is smooth over $\AAA^1\smallsetminus\{0\}$. Indeed, there is an open neighborhood $U$ of $V(f)$ such that $f$ is smooth
on $U\smallsetminus V(f)$, while on $Y\smallsetminus V(f)$ we may replace $f$ by $1$.

Therefore we can apply the semi-stable reduction theorem of \cite{KKMS} for $f$ to get a 
positive integer 
$d\geq 1$
with the following property. If $\beta\colon\AAA^1\to\AAA^1$ is given by $\beta(t)=t^d$, 
and if we consider the Cartesian diagram
\begin{equation}\label{diag_proof_main}
\xymatrix{
W \ar[r]^{\alpha}\ar[d]_{g} &  Y\ar[d]^{f}\\
\AAA^1\ar[r]^{\beta} & \AAA^1
}
\end{equation}
then there is a projective morphism $\psi\colon Z\to W$ that satisfies
\begin{enumerate}
\item[(i)] $\psi$ is an isomorphism over $\AAA^1\smallsetminus\{0\}$ (in particular, $\psi$ is birational).
\item[(ii)] $Z$ is nonsingular.
\item[(iii)] $\psi^*(g)$ defines a \emph{reduced} simple normal crossings divisor on $Z$.
\end{enumerate}

Let $W_0$ be an irreducible component of $W$ that maps surjectively onto $Y$, and let
$X$ be the corresponding irreducible component of $Z$ that surjects onto $W_0$. 
If $Y'$ is the normalization of $W_0$, then we have induced morphisms
$$X\overset{\pi}\to Y'\overset{\phi}\to Y.$$ We denote by $h$ the pull-back of $g$ to $Y'$
By construction, $\phi$ is finite and surjective, \'{e}tale over $Y\smallsetminus
V(f)$. In particular, the singular locus of $Y'$ is contained in $V(h)$.

Let $D'=-K_{Y'/Y}$ be the divisor defined as in Proposition~\ref{invar_finite_mult}. Note that $D'$ is supported
on $V(h)$. It follows from 
Proposition~\ref{invar_finite_mult} (see also Remark~\ref{rem_invar_finite_mult}) 
that for every $\lambda\in\RR_{\geq 0}$ we have
\begin{equation}\label{eq11_main_thm}
\cJ(Y,f^{\lambda})=\phi_*\cJ(Y',D',h^{m\lambda})\cap \cO_Y.
\end{equation}

We define the divisor $D'_X$ as in \S\ref{mult_ideals}, such that in particular $K_X+D'_X$ and $\pi^*(K_{Y'}+D')$
are linearly equivalent. Let $E$ be the reduced simple normal crossings divisor defined on $X$ by
$\pi^*(h)$. Note that $D'_X$ is supported on $E$, which has simple normal crossings, hence it follows from
 Proposition~\ref{invar_birat_mult} and the definition of multiplier ideals that
 for every $\lambda\in\RR_{\geq 0}$ we have
 \begin{equation}\label{eq15_main_thm}
 \cJ(Y',D',h^{m\lambda})=\pi_*\cO_X(-D'_X-\lfloor m\lambda E\rfloor). 
 \end{equation}

We choose a model over a finitely generated $\ZZ$-algebra $A$, contained in $k$, for
all the above varieties and morphisms. We may assume that the above properties extend to
all fibers over $k(s)$, for $s\in\Spec A$ a closed point. Furthermore, after replacing $A$ by some localization $A_a$, we may assume that for every closed point $s\in\Spec A$ the characteristic
of $k(s)$ does not divide $[K(Y')\colon K(Y)]$. 
In this case Theorem~\ref{finite_test} applies to give
\begin{equation}\label{eq12_main_thm}
\tau(Y_s,f_s^{\lambda})=(\phi_s)_*\tau(Y'_s,D'_s,{h}_s^{m\lambda})\cap\cO_{Y_s}
\end{equation}
for every closed point $s\in\Spec A$ and every $\lambda\in\RR_{\geq 0}$. On the other hand,
we may assume that (\ref{eq11_main_thm}) induces 
\begin{equation}\label{eq13_main_thm}
\cJ(Y,f^{\lambda})_s=(\phi_s)_*\cJ(Y',D', h^{m\lambda})_s\cap \cO_{Y_s},
\end{equation}
and (\ref{eq15_main_thm}) induces
\begin{equation}\label{eq16_main_thm}
\cJ(Y',D',h^{m\lambda})_s=(\pi_s)_*\cO_{X_s}(-D'_{X_s}-\lfloor m\lambda E_s\rfloor)
\end{equation}
for every $s\in\Spec A$ and every $\lambda\in\RR_{\geq 0}$. 
Therefore in order to guarantee $\tau(Y_s, f_s^{\lambda})=\cJ(Y,f^{\lambda})_s$
for all $\lambda$, it is enough to ensure
\begin{equation}\label{eq17_main_thm}
\tau(Y'_s,D'_s, h_s^{m\lambda})=(\pi_s)_*\cO_{X_s}(-D'_{X_s}-\lfloor m\lambda E_s\rfloor)
\end{equation}
for all $\lambda\in\RR_{\geq 0}$. 

We now apply Theorem~\ref{thm_relative} to the morphism $\pi\colon X\to Y'$ and to the reduced
simple normal crossings divisor $E$. It follows that there is a dense set of closed points $S\subset\Spec A$ such that
$$(\pi_s)_*(F^e_*(\omega_{X_s}(E_s)))\to (\pi_s)_*(\omega_{X_s}(E_s))$$
is surjective for every $s\in S$ and every $e\geq 1$. The equality (\ref{eq17_main_thm}) now follows applying Lemma~\ref{main_ingredient}
below to the morphism $\pi_s\colon X_s\to Y'_s$, the divisor $D'_s$ and 
$h_s\in\Gamma(Y'_s,\cO_{Y'_s})$.
\end{proof}

\begin{lemma}\label{main_ingredient}
Let $\pi\colon X\to T$ be a birational morphism of schemes of finite type over a perfect field of 
characteristic $p>0$, with $T$ normal and irreducible, and $h\in\Gamma(T,\cO_T)$ nonzero. If
$D$ is a divisor on $T$ supported on $V(h)$ such that $K_T+D$
is Cartier, and if the following hold:
\begin{enumerate}
\item[(i)]  $X$ is nonsingular.
\item[(ii)] $E:=\pi^*({\rm div}(h))$ is a reduced divisor, with simple normal crossings.
\item[(iii)] $\pi$ is proper, and an isomorphism over $T\smallsetminus V(h)$.
\item[(iv)] The map $\pi_*(F^e_*(\omega_{X}(E)))\to \pi_*(\omega_{X}(E))$ is surjective
for every $e\geq 1$,
\end{enumerate}
then $\tau(T,D, h^{\lambda})=\pi_*\cO_X(-D_X-\lfloor \lambda E\rfloor)$ for every $\lambda\in
\RR_{\geq 0}$, where $D_X$ is defined as in \S\ref{mult_ideals}.
\end{lemma}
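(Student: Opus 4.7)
The plan is to prove the equality by two inclusions. The inclusion $\tau(T, D, h^\lambda) \subseteq \pi_*\cO_X(-D_X - \lfloor \lambda E \rfloor)$ is immediate from Proposition~\ref{one_inclusion} applied with $\fra = (h)$ and $G = E$: by (iii) the morphism $\pi$ is an isomorphism off $V(h)$, so every irreducible component of $D_X$ is supported on $E = \pi^*(\mathrm{div}(h))$, and (i)--(ii) supply the simple normal crossings condition required by that proposition.

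For the reverse inclusion, the claim is local on $T$, so I will assume $T = \Spec R$ is affine. The open subset $T \setminus V(h)$ is isomorphic under $\pi$ to the smooth open $X \setminus E$, hence is regular, and $D$ restricts to zero on it. By the example immediately following equation (\ref{eq2_def_test}) in the excerpt, I may therefore take the test element for the triple $(T,D,h^\lambda)$ to be $c = h^N$ for a sufficiently large $N$, with which the sum formula reads
\begin{equation*}
\tau(T, D, h^\lambda) \;=\; \sum_{e\geq 1}\phi_D^{(e)}\bigl(F^e_*(h^{\ell_e + N}\,\cO_T((1-p^e)(K_T+D)))\bigr), \qquad \ell_e := \lceil \lambda(p^e - 1)\rceil.
\end{equation*}
It will suffice to exhibit a single index $e$ for which the corresponding term already equals $J_0 := \pi_*\cO_X(-D_X - \lfloor \lambda E \rfloor)$.

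To evaluate the $e$-th term I plan to invoke the commutative diagram (\ref{diag3_3}) applied to the subsheaf $h^{\ell_e+N}\,\cO_T((1-p^e)(K_T+D))$. Because $\rho$ is an $\cO_T$-linear isomorphism and $\pi^*h$ cuts out exactly the divisor $E$, the projection formula identifies its image under $F^e_*(\rho)$ with $\pi_*F^e_*(\frb^{\ell_e+N}\cdot\cO_X((1-p^e)(K_X+D_X)))$, where $\frb := \cO_X(-E)$. Commutativity of the diagram then rewrites the $e$-th term as $\pi_*\phi_{D_X}^{(e)}\bigl(F^e_*(\frb^{\ell_e+N}\cdot\cO_X((1-p^e)(K_X+D_X)))\bigr)$, and Example~\ref{computation_SNC} --- which applies since $X$ is smooth, $E$ has simple normal crossings, and $D_X$ is supported on $E$ --- computes the inner image as $\cO_X(-D_X - F_e)$, where $F_e = \sum_i \lfloor(\ell_e + N - a_i)/p^e\rfloor\,E_i$ and $a_i$ denotes the coefficient of $E_i$ in $D_X$.

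The remaining step is an arithmetic verification: for $N \geq \lfloor \lambda \rfloor + \max_i a_i$ and $e$ sufficiently large, the floor $\lfloor(\ell_e + N - a_i)/p^e\rfloor$ equals $\lfloor \lambda \rfloor$ for every $i$ simultaneously, so $F_e = \lfloor \lambda \rfloor E = \lfloor \lambda E \rfloor$, the last equality since $E$ is reduced. The $e$-th term then becomes $J_0$, completing the reverse inclusion. The main difficulty is this final bookkeeping, particularly at integer $\lambda$, where the left-hand limit of $\tau(T,D,h^\lambda)$ jumps and one must choose $N$ uniformly large in the components of $E$ so that the floor produces $\lfloor \lambda \rfloor$ on every component at once. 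Hypothesis~(iv) does not appear explicitly in the plan above; presumably it would enter in an alternative route that constructs the test element directly via the surjective trace maps $\pi_*(F^e_*\omega_X(E)) \twoheadrightarrow \pi_*\omega_X(E)$ in place of the appeal to \cite{ST}.
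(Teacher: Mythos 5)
Your argument for the inclusion $\tau(T,D,h^\lambda)\subseteq\pi_*\cO_X(-D_X-\lfloor\lambda E\rfloor)$ is fine, and your choice of test element $c=h^N$, the passage through diagram~(\ref{diag3_3}), and the computation via Example~\ref{computation_SNC} are all in agreement with the paper's proof. The arithmetic with $\lfloor(\ell_e+N-a_i)/p^e\rfloor$ also works (the paper first reduces to $\lambda<1$ via Skoda and makes $D$ effective by adding $m\cdot\mathrm{div}(h)$; your version handles general $\lambda$ at the cost of slightly heavier bookkeeping, which is legitimate, though you should still replace $D$ by $D+m\cdot\mathrm{div}(h)$ to invoke~(\ref{eq2_def_test}), which is stated for effective $D$).

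However, there is a genuine gap at the final step, and it is exactly the spot where hypothesis~(iv) must be used --- not in some ``alternative route,'' but to close your own argument. When you rewrite the $e$-th term of the sum via diagram~(\ref{diag3_3}), what you actually obtain is
$$\pi_*(\phi_{D_X}^{(e)})\bigl(\pi_*F^e_*(h^{\ell_e+N}\cdot\pi^*(\cL))\bigr),$$
i.e.\ the image of the map $\pi_*(u)$, where $u\colon F^e_*(h^{\ell_e+N}\cdot\pi^*(\cL))\to\cO_X(-D_X-F_e)$ is the surjection on $X$ supplied by Example~\ref{computation_SNC}. But $\pi_*$ is only left exact: surjectivity of $u$ on $X$ does \emph{not} give surjectivity of $\pi_*(u)$, so a priori the $e$-th term is merely \emph{contained in} $\pi_*\cO_X(-D_X-F_e)=J_0$ (which you already knew), rather than equal to it. To conclude equality you must prove that $\pi_*(u)$ is surjective. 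This is precisely the content of the ``Claim'' in the paper's proof: after twisting by $\cO_T(K_T+D)$ and using the projection formula, surjectivity of $\pi_*(u)$ is reduced to surjectivity of $\pi_*\bigl(F^e_*(\omega_X(-(p^e-1)E))\bigr)\to\pi_*(\omega_X)$, which after a further twist by $\cO_T(H)$ (with $H=\mathrm{div}(h)$) becomes exactly the map $\pi_*(F^e_*(\omega_X(E)))\to\pi_*(\omega_X(E))$ of hypothesis~(iv). Without this step the reverse inclusion is not established.
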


\begin{proof}
Note that by (iii), the divisor $D_X$ is supported on $E$, hence $D_X, E$ have simple
normal crossings by (ii). Proposition~\ref{one_inclusion} gives the inclusion ``$\subseteq$"
in the statement, hence we just need to show that
\begin{equation}\label{eq1_main_ingredient}
\pi_*\cO_X(-D_X-\lfloor\lambda E\rfloor)\subseteq\tau(T,D, h^{\lambda})
\end{equation}
for every $\lambda\geq 0$. 

After replacing $D$ by $D+m\cdot {\rm div}(h)$, with $m\gg 0$, we may assume that $D$ is effective.
It follows from the projection formula and from 
Proposition~\ref{Skoda_test} that it is enough to prove (\ref{eq1_main_ingredient})
for $\lambda<1$. 
Let us fix such $\lambda$. Note that in this case, the left-hand side of (\ref{eq1_main_ingredient})
is equal to $\pi_*\cO_X(-D_X)$. We write $D_X=\sum_{i=1}^Na_iE_i$.

After taking a finite affine open cover of $T$, we may assume that $T$ is affine.
For the description of $\tau(T,D, h^{\lambda})$ we use formula (\ref{eq2_def_test}). Note that 
by (i) and (iii), the singular locus of $T$ is contained in $V(h)$. Since $D$ is also supported on $V(h)$, we see that if $\ell\gg 0$, then we may take $c=h^{\ell}$ in formula (\ref{eq2_def_test}). 
We fix $\ell$ with this property such that, in addition,
$\ell\geq a_i$ for all $i$.
It follows that it is enough 
to show that if $e\gg 0$, then
\begin{equation}\label{eq2_main_ingredient}
\pi_*\cO_X(-D_X)\subseteq \phi_D^{(e)}\left(F^e_*(h^{d_e}\cdot\cO_T((1-p^e)(K_T+D)))\right),
\end{equation}
where $d_e=\lceil\lambda(p^e-1)\rceil+\ell$. 

For the sake of a more compact notation, let us put $\cL=\cO_T((1-p^e)(K_T+D))$.
We use the commutative diagram (\ref{diag3_3}) to write the right-hand side of (\ref{eq2_main_ingredient}) as
\begin{equation}\label{eq4_main_ingredient}
\pi_*(\phi_{D_X}^{(e)})\left(F^e_*(\rho(h^{d_e}\cdot\cL))\right),
\end{equation}
in which we recall that $\rho\colon \cL\to \pi_*(\pi^*(\cL))$ denotes the canonical isomorphism.
It is clear that $F^e_*(\rho(h^{d_e}\cdot\cL))=\pi_*\left(F^e_*(h^{d_e}\cdot\pi^*(\cL))\right)$.
For $e\gg 0$, $\phi_{D_X}^{(e)}$ induces a surjection on $X$
\begin{equation}\label{eq5_main_ingredient}
u\colon F^e_*(h^{d_e}\cdot\pi^*(\cL))\to \cO_X(-D_X)
\end{equation}
This follows from Example~\ref{computation_SNC} and the fact that
$\lfloor (d_e-a_i)/p^e\rfloor=0$ for $e\gg 0$. 

\noindent{\bf Claim}. $\pi_*(u)\colon \pi_*\left(F^e_*(h^{d_e}\cdot\pi^*(\cL))\right)
\to\pi_*\cO_X(-D_X)$ is surjective.

If this holds, then the expression in (\ref{eq4_main_ingredient}) 
is equal to $\pi_*\cO_X(-D_X)$, which gives the inclusion in (\ref{eq2_main_ingredient}).

Therefore the proof of the  lemma is complete if we show the Claim. Note that the surjectivity 
of $\pi_*(u)$ is equivalent to the surjectivity of $\pi_*(u)\otimes\cO_T(K_T+D)$. Using the projection
formula, this becomes equivalent to the surjectivity of
\begin{equation}\label{eq7_main_ingredient}
\pi_*\left(F^e_*(\omega_X(D_X-d_eE))\right) \to\pi_*(\omega_X).
\end{equation}
For $e\gg 0$, the divisor $(D_X-d_eE)+(p^e-1)E$ is effective, hence the surjectivity
of the map in (\ref{eq7_main_ingredient}) follows from the surjectivity of 
\begin{equation}\label{eq8_main_ingredient}
w\colon \pi_*\left(F^e_*(\omega_X(-(p^e-1)E))\right) \to\pi_*(\omega_X).
\end{equation}
This in turn is surjective if and only if $w\otimes\cO_T(H)$ is surjective, but the latter map is identified
via the projection formula with 
$$\pi_*\left(F^e_*(\omega_X(E))\right)\to\pi_*(\omega_X(E)),$$
which is surjective by the assumption in (iv). This completes the proof of the lemma.
\end{proof}

\providecommand{\bysame}{\leavevmode \hbox \o3em
{\hrulefill}\thinspace}

\end{document}